\newcommand{\R}{\mathbb{R}} 
\newcommand{\K}{\mathbb{K}} 
\newcommand{\N}{\mathbb{N}}
\newcommand{\Lie}{\mathcal{L}}
\newcommand{\M}{\mathbb{M}}
\newcommand{\scL}{\mathscr{L}}
\newcommand{\ts}{\mathcal{T}}
\newcommand{\gs}{\mathcal{G}}
\newcommand{\es}{\mathcal{E}}
\newcommand{\vs}{\mathcal{V}}
\newcommand{\A}{\mathcal{A}}
\newcommand{\bm}{\mathbf{m}}
\newcommand{\rbm}{\rev{\bm}}
\newcommand{\psd}{\mathbb{S}}
\DeclarePairedDelimiter{\abs}{\lvert}{\rvert}
\DeclarePairedDelimiter{\norm}{\lVert}{\rVert}
\DeclarePairedDelimiter{\ceil}{\lceil}{\rceil}
\DeclarePairedDelimiter{\diag}{\textrm{diag}(}{)}
\DeclarePairedDelimiterX{\inp}[2]{\langle}{\rangle}{#1, #2}
\DeclarePairedDelimiter{\Mp}{\mathcal{M}_+(}{)}
\newtheorem{theorem}{Theorem}[section]
\newtheorem{lem}{Lemma}[section]
\newtheorem{cor}{Corollary}[section]
\newtheorem{remark}{Remark}
\newtheorem{problem}{Problem}
\newcommand{\rev}[1]{#1}
\newcommand{\rw}[1]{ #1}
\title{\LARGE \bf
Bounding the Distance to Unsafe Sets with \\ Convex Optimization
}
\renewcommand\footnotemark{}
\author{Jared Miller$^1$, Mario Sznaier$^1$
\thanks{$^1$J. Miller and M. Sznaier are with the Robust Systems Lab,  ECE Department, Northeastern University, Boston, MA 02115. (e-mails: miller.jare@northeastern.edu,  msznaier@coe.neu.edu).}
\thanks{ J. Miller and M. Sznaier were partially supported by NSF grants  CNS--1646121, CMMI--1638234, ECCS--1808381 and CNS--2038493,  and AFOSR grant FA9550-19-1-0005. 
This material is based upon research supported by the Chateaubriand Fellowship of the Office for Science \& Technology of the Embassy of France in the United States.
}
}
\date{\empty}
\begin{document}

\maketitle

\begin{abstract}
\label{sec:abstract}
This work proposes an algorithm to bound the minimum distance between points on trajectories of a dynamical system and points on an unsafe set. Prior work on certifying safety of trajectories includes barrier and density methods, which do not provide a margin of proximity to the unsafe set in terms of distance. The distance estimation problem is relaxed to a Monge-Kantorovich type optimal transport problem based on existing occupation-measure methods of peak estimation. Specialized programs may be developed for polyhedral norm distances (e.g. L1 and Linfinity) and for scenarios where a shape is traveling along trajectories (e.g. rigid body motion). The distance estimation problem will be correlatively sparse when the distance objective is separable.

\end{abstract}
\section{Introduction}
\label{sec:intro}

A trajectory is safe with respect to an unsafe set $X_u$ if no point along the trajectory contacts or enters $X_u$. Safety of trajectories may be quantified by the distance of closest approach to $X_u$, which will be positive for all safe trajectories and zero for all unsafe trajectories. The task of finding this distance of closest approach will also be referred to as `distance estimation'.
In this setting, an agent with state $x$ is restricted to a state space $X \subseteq \R^n$ and starts in an initial set $X_0 \subset X$. The trajectory of an agent evolving according to locally Lipschitz dynamics $\dot{x} = f(t, x(t))$ starting at an initial condition $x_0 \in X_0$ is denoted by $x(t \mid x_0)$.
The closest approach as measured by a distance function $c$ that any trajectory takes to the unsafe set $X_u$ in a time horizon of $t \in [0, T]$ can be found by solving, 
\begin{equation}
    \label{eq:dist_traj}
    \begin{aligned}
    P^* = & \rev{\inf}_{t,\:x_0, y} c(x(t \mid x_0), y) \\
    & \dot{x}(t) = f(t, x), \quad t \in [0, T] \\
    & \rev{x(0) = }x_0 \in X_0, \ y \in X_u.
    \end{aligned}
\end{equation}
Solving \eqref{eq:dist_traj} requires optimizing over all points $(t, x_0, y) \in [0, T] \times X_0 \times X_u$, which is generically a non-convex and difficult task.
Upper bounds to $P^*$ may be found by sampling points $(x_0, y)$ and evaluating $c(x(t \mid x_0), y)$ along these sampled trajectories. Lower bounds to $P^*$ are a universal property of all trajectories, and will satisfy $P^* > 0$ if all trajectories starting from $X_0$ in the time horizon $[0, T]$ are safe with respect to $X_u$. 

This paper proposes an occupation-measure based method to compute lower bounds of $P^*$ through a converging hierarchy of \rev{convex \acp{SDP}} \cite{boyd1994linear}. 
These \acp{SDP} arise from \rw{a} finite truncation of infinite dimensional \acp{LP} in measures \cite{lasserre2009moments}. Occupation measures are Borel measures that contain information about the distribution of states evolving along trajectories of a dynamical system. The distance estimation \ac{LP} formulation is based on measure \acp{LP} arising from  peak estimation of dynamical systems \cite{helmes2001computing, cho2002linear, fantuzzi2020bounding} because the state function to be minimized along trajectories is the point-set distance function between $x\in X$ and $X_u$. 
Inspired by optimal transport theory \cite{villani2008optimal, santambrogio2015optimal, peyre2019computational}, the distance function $c(x, y)$ between points $x \in X$ on trajectories and $y \in X_u$ is relaxed to an \rev{expectation of the distance $c(x, y)$ with respect to} probability distributions over $X$ and $X_u$. 

Occupation measure \acp{LP} for control problems were first formulated in \cite{lewis1980relaxation}, and their \ac{LMI} relaxations were detailed in \cite{henrion2008nonlinear}. These occupation measure methods have also been applied to region of attraction estimation and backwards reachable set maximizing control \cite{henrion2013convex, korda2013inner, korda2014convex}. 

Prior work on verifying safety of trajectories includes  Barrier functions \cite{prajna2004safety, prajna2006barrier},  Density functions \cite{rantzer2004analysis}, and Safety Margins \cite{miller2020recovery}. Barrier and Density functions offer binary indications of safety/unsafety; if a Barrier/Density function exists, then all trajectories starting from $X_0$ are safe. Barrier/Density functions may be non-unique, and the existence of such a function does not yield a measure of closeness to the unsafe set. Safety Margins are a measure of constraint violation, and a negative safety margin verifies safety of trajectories. Safety Margins can vary with constraint reparameterization, \rev{even in the same coordinate system} (e.g. multiplying all defining constraints of $X_u$ by a positive constant \rev{scales the safety margin by that constant}), and therefore yield a qualitative certificate of safety. The distance of closest approach $P^*$ is independent of constraint reparameterization\rev{, returning} quantifiable and geometrically interpretable information about safety of trajectories. 


The contributions of this paper include:
\begin{itemize}
    \item A measure \ac{LP} to lower bound the distance estimation task \eqref{eq:dist_traj}
    \item A proof of convergence to $P^*$ within arbitrary accuracy as the degree of \ac{LMI} approximations approaches infinity
        \item A decomposition of the distance estimation LP using correlative sparsity when the cost $c(x, y)$ is separable 
    \item Extensions such as finding the distance of closest approach between a shape with evolving orientations and the unsafe set
\end{itemize}

Parts of this paper were \rw{presented at} the 61st Conference on Decision and Control
\cite{miller2022distance_short}. Contributions of this work over and above the conference version include:
\begin{itemize}
    \item A discussion of the scaling properties of safety margins 
    \item An application of correlative sparsity in order to reduce the computational cost of finding distance estimates
    \item An extension to bounding the set-set distance between a moving shape and the unsafe set
    \item A presentation of a lifting framework for polyhedral norm distance functions
    \item A full proof of strong duality
\end{itemize}
%

This paper is structured as follows: Section \ref{sec:prelim} reviews preliminaries such as notation and measures for peak and safety estimation. Section \ref{sec:distance_lp} proposes an infinite-dimensional \ac{LP} to bound the distance closest approach between points along trajectories and points on the unsafe set.  
\rev{Section \ref{sec:distance_finite} truncates the infinite-dimensional \acp{LP} into \acp{SDP} through the moment-\ac{SOS} hierarchy, and studies numerical considerations associated with these \acp{SDP}}.
Section \ref{sec:sparsity} utilizes correlative sparsity to create \ac{SDP} relaxations of distance estimation with smaller \ac{PSD} matrix constraints.
Distance estimation problems for shapes traveling along trajectories are posed in Section \ref{sec:shape}.
Examples of the distance estimation problem are presented in Section \ref{sec:examples}.
Section \ref{sec:extensions} details extensions to the distance estimation problem, including uncertainty, polyhedral norm distances, and application of correlative sparsity.  
The paper is concluded in Section \ref{sec:conclusion}. Appendix \ref{app:duality} offers a proof of strong duality betwen infinite-dimensional LPs for distance estimation. \rev{Appendix \ref{sec:moment_sos} summarizes the moment-SOS hierarchy}.
\begin{acronym}[WSOS]

\acro{BSA}{Basic Semialgebraic}


\acro{CSP}{Correlative Sparsity Pattern}

\acro{LMI}{Linear Matrix Inequality}
\acroplural{LMI}[LMIs]{Linear Matrix Inequalities}
\acroindefinite{LMI}{an}{a}


\acro{LP}{Linear Program}
\acroindefinite{LP}{an}{a}


\acro{POP}{Polynomial Optimization Problem}

\acro{PSD}{Positive Semidefinite}

\acro{PD}{Positive Definite}

\acro{SDP}{Semidefinite Program}
\acroindefinite{SDP}{an}{a}

\acro{SOS}{Sum of Squares}
\acroindefinite{SOS}{an}{a}

\acro{WSOS}{Weighted Sum of Squares}

\end{acronym}

\section{Preliminaries}
\label{sec:prelim}
\subsection{Notation and Measure Theory}



Let $\R$ be the set of real numbers and $\R^n$ be an $n$-dimensional real Euclidean space. Let $\N$ be the set of natural numbers and $\N^n$ be the set of $n$-dimensional multi-indices. The total degree of a multi-index $\alpha \in \N^n$ is $\abs{\alpha} = \sum_i \alpha_i$. A monomial $\prod_{i=1}^n x_i^{\alpha_i}$ may be expressed in multi-index notation as $x^\alpha$. The set of polynomials with real coefficients is $\R[x]$, and polynomials $p(x) \in \R[x]$ may be represented as the sum over a finite index set $\mathscr{J} \subset \N^n$ of $p(x) = \sum_{\alpha \in \mathscr{J}} p_\alpha x^\alpha$.
The set of polynomials with monomials up to degree $\abs{\alpha} = d$ is $\R[x]_{\leq d}$.
A metric function $c(x, y)$ over the space $X$ with $x, y \in X$ satisfies the following properties \cite{deza2009encyclopedia}:
\begin{subequations}
\begin{align}
    &c(x, y) = c(y, x) > 0 & x \neq y  \\
    &c(x, x) = 0& \\
    &c(x, y) \leq c(x, z)  + c(z ,y) & \forall z \in \rw{S}.
\end{align}
\end{subequations}
The set of metrics are closed under addition and pointwise maximums. 
Every norm $\norm{\cdot}$ inspires a metric $c_{\norm{\cdot}}(x, y) = \norm{x-y}$.
The point-set distance function $c(x; Y)$ between a point $x \in X$ and a closed set $Y \subset X$ is defined by:
\begin{equation}
    c(x; Y) = \inf_{y \in Y} c(x, y).
\end{equation}
 

The set of continuous functions over \rw{a Banach space} $\rw{S}$ is denoted as $C(\rw{S})$, the set of finite signed Borel measures over $\rw{S}$ is $\mathcal{M}(\rw{S})$, and the set of nonnegative Borel measures over $\rw{S}$ is $\Mp{\rw{S}}$. A duality pairing exists between all functions $f \in C(\rw{S})$ and measures $\mu \in \Mp{\rw{S}}$ by Lebegue integration: $\inp{f}{\mu} = \int_{\rw{S}} f(s) d \mu(s)$ \rev{when $\rw{S}$ is compact}. The subcone of nonnegative continuous functions over $\rw{S}$ is $C_+(\rw{S}) \subset C(\rw{S})$, which satisfies $\inp{f}{\mu} \geq 0 \ \forall f \in C_+(\rw{S}), \ \mu \in \Mp{\rw{S}}$. 
The subcone of continuous functions over $\rw{S}$ \rev{whose first $k$ derivatives are continuous is $C^k(\rw{S})$ (with $C(\rw{S}) = C^0(\rw{S})$)}.
The indicator function of a set $A \subseteq \rw{S}$ is a function $I_A: \rw{S} \rightarrow \{0, 1\}$ taking values $I_A(\rw{s}) = 1$ if $\rw{s} \in A$ and $I_A(\rw{s}) = 0$ if $\rw{s} \not\in A$. The measure of a set $A$ with respect to $\mu \in \Mp{\rw{S}}$ is $\mu(A) = \inp{I_A(\rw{s})}{\mu} = \int_A d \mu$. The mass of $\mu$ is $\mu(\rw{S}) = \inp{1}{\mu}$, and $\mu$ is a probability measure if $\inp{1}{\mu} = 1$. The support of $\mu$ is the set of all points $\rw{s} \in \rw{S}$ such that every open neighborhood $N_\rw{s}$ of $\rw{s}$ has $\mu(N_x) > 0$. \rev{The Lebesgue measure $\lambda_{\rw{S}}$ over a space $\rw{S}$ is the volume measure satisfying $\inp{f}{\lambda_{\rw{S}}} = \int_{\rw{S}} f(\rw{s}) d\rw{s}\ \forall f \in C(\rw{S})$.} The Dirac delta $\delta_{\rw{s}'}$ is a probability measure supported at a single point $\rw{s}' \in \rw{S}$, and the duality pairing of any function $f \in C(\rw{S})$ with respect to $\delta_{\rw{s}'}$ is $\inp{f(\rw{s})}{\delta_{\rw{s}'}} = f(\rw{s}')$. A measure $\mu = \sum_{i=1}^r c_i \delta_{\rw{s}_i}$ that is the conic combination (weights $c_i > 0$) of $r$ distinct Dirac deltas is known as a rank-$r$ atomic measure. The atoms of $\mu$ are the support points $\{\rw{s}_i\}_{i=1}^r$.

Let $\rw{S}, Y$ be spaces and $\mu \in \Mp{\rw{S}}, \ \nu \in \Mp{Y}$ be measures. The product measure $\mu \otimes \nu$ is the unique measure such that  $\forall A \in \rw{S}, \ B \in Y: \ (\mu \otimes \nu)(A \times B) = \mu(A) \nu(B)$. 
The pushforward of a map $Q: \rw{S}\rightarrow Y$ along a measure $\mu\rev{(\rw{s})}$ is $Q_\# \mu\rev{(y)}$, which satisfies 
$\forall f \in C(Y): \ \inp{f(y)}{Q_\# \mu(y)} = \inp{f(Q(\rw{s}))}{\mu(\rw{s})}$. The measure of a set $B \in Y$ is $Q_\# \mu (Y) = \mu(Q^{-1} (Y))$. The projection map 
$\pi^\rw{s}: \rw{S} \times Y \rightarrow \rw{S}$ preserves only the
$\rw{s}$-coordinate as $(\rw{s}, y) \rightarrow \rw{s}$ and a similar definition holds for $\pi^y$. Given a measure $\eta \in \Mp{\rw{S} \times Y}$, the projection-pushforward $\pi^{\rw{s}}_\# \eta$ expresses the $\rw{s}$-marginal of $\eta$ with duality pairing $\forall f \in C(\rw{S}): \inp{f(\rw{s})}{\pi_\#^\rw{s} \eta} = \int_{\rw{S} \times Y} f(\rw{s}) d \eta(\rw{s}, y) $.
Every linear operator $\Lie: \rw{S} \rightarrow Y$ possesses a unique adjoint operator $\Lie^\dagger$ such that $\inp{\Lie f}{\mu} = \inp{f}{\Lie^\dagger \mu}, \ \forall f \in C(\rw{S}), \mu \in \Mp{\rw{S}}$.

\rw{Given an interval $[a, b]$ and a continuous curve $\rw{s}(t)$ where $\rw{s} : [a, b] \rightarrow \rw{S}$ and \rw{$S \subset \R^n$},
the pushforward of the Lebesgue measure on $[a, b]$ through the map $t \rightarrow  (t, \rw{s}(t))$ is called the
occupation measure associated to $\rw{s}(t)$. The sup-norm of a function $f \in C^0(\rw{S})$ is $\norm{f}_{C^0(\rw{S})} = \sup_{\rw{s} \in \rw{S}} \abs{f(\rw{s})}$. The $C^1$ norm of a function $f\in C^1(\rw{S})$ is $\norm{f}_{C^1(\rw{S})} = \norm{f}_{C^0(\rw{S})} + \sum_{i=1}^n \norm{\partial_{s_i} f}_{C^0(\rw{S})}$.}

\subsection{Peak Estimation and Occupation Measures}
\label{sec:peak}


The peak estimation problem involves finding the maximum value of a  function $p(x)$ along trajectories of a dynamical system \rw{starting from $X_0 \subset X \subset \R^n$ and remaining in $X$}, 
\begin{equation}
    \begin{aligned}
    P^* = & \rev{\sup}_{t \in [0, T],\, x_0 \in X_0} p(x(t \mid x_0)),  \\
    &\label{eq:peak_traj_std} \rw{x(0) = x_0, } \quad \dot{x}(t) = f(t, x(t)).
    \end{aligned}
\end{equation}

Every optimal trajectory of \eqref{eq:peak_traj_std}  \rev{(if one exists)} may be described by a tuple $(x_0^*, t_p^*)$ with $x_p^* = x(t^*_p \mid x_0^*)$ satisfying $P^* = p(x^*_p) = p(x(t^*_p \mid x_0^*))$. 
A persistent example throughout this paper will be the Flow system of \cite{prajna2004safety}:
\begin{equation}
\label{eq:flow}
    \dot{x} = \begin{bmatrix}x_2 \\ -x_1 -x_2 + \frac{1}{3}x^3_1\end{bmatrix}.
\end{equation}

Figure \ref{fig:flow_recovery} plots trajectories of the flow system in cyan for times $t \in [0, 5]$, starting from the initial set $X_0 =  \{x \mid (x_1-1.5)^2 + x_2 \leq 0.4^2\}$ in the black circle. The minimum value of $x_2$ along these trajectories is  $\min x_2 \approx -0.5734$. The optimizing trajectory is shown in dark blue, starting at the blue circle $x_0^* \rev{\approx} (1.4889, -0.3998)$ and reaching optimality at $x_p^* \rev{\approx} (0.6767, -0.5734)$ in time $t^*_p\rev{\approx} 1.6627$.

    \begin{figure}[ht]
        \centering
        \includegraphics[width=0.5\linewidth]{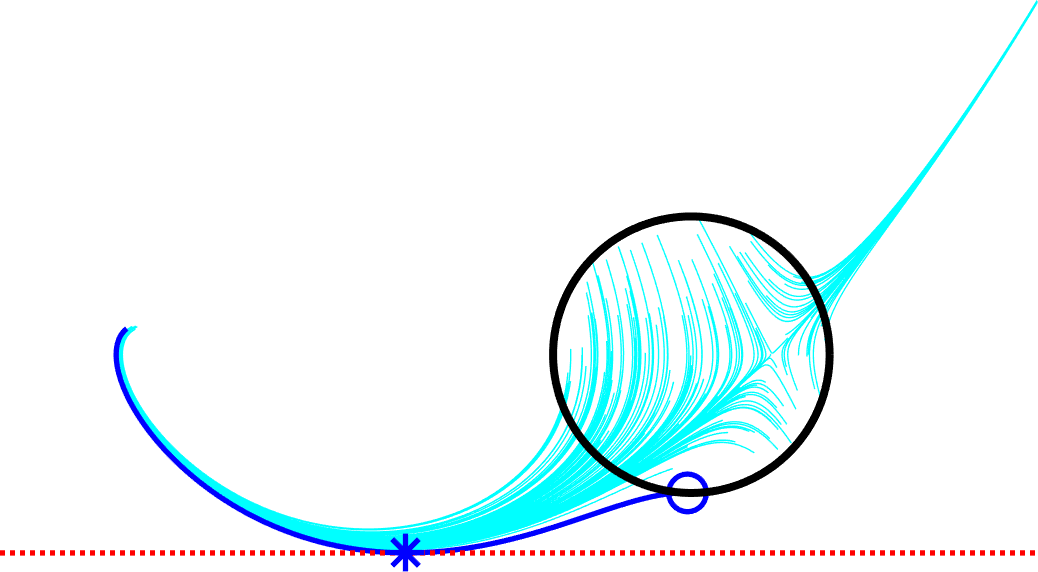}
        \caption{\label{fig:flow_recovery} Minimizing $x_2$ along Flow system \eqref{eq:flow}}
    \end{figure}

The work in \cite{cho2002linear} developed a measure \ac{LP} to find an upper bound $p^* \geq P^*$. This measure \ac{LP} involves an initial measure $\mu_0 \in \Mp{X_0}$, a peak measure $\mu_p \in \Mp{[0, T] \times X}$, and an occupation measure $\mu \in \Mp{[0, T] \times X}$ connecting together $\mu_0$ and $\mu_p$. Given a distribution of initial conditions $\mu_0 \in \Mp{X_0}$ and a stopping time $0 \leq t^* \leq T$, the occupation measure $\mu$ of a set $A \times B$ with $A \subseteq [0, T], \ B \subseteq X$ is defined by,
\begin{equation}
    \label{eq:avg_free_occ}
    \mu(A \times B) = \int_{[0, t^*] \times X_0} I_{A \times B}\left((t, x(t \mid x_0))\right) dt \, d\mu_0(x_0).
\end{equation}
The measure $\mu(A \times B)$ is the $\mu_0$-averaged amount of time a trajectory will dwell in the box $A \times B$. With ODE dynamics $\dot{x}(t) = f(t, x(t))$, the Lie derivative $\Lie_f$ along a test function $v \in C^1([0, T]\times X)$ is,
\begin{equation}
    \Lie_f v(t, x) = \partial_t v(t,x) + f(t,x) \cdot \nabla_x v(t,x).
\end{equation}

Liouville's equation expresses the constraint that 
$\mu_0$ is connected to $\mu_p$ by trajectories with dynamics $f$ for all test functions $v \in  C^1([0, T]\times X)$,
\begin{align}
\inp{v(t, x)}{\mu_p} &= \inp{v(0, x)}{\mu_0} + \inp{\Lie_f v(t,x)}{\mu} \label{eq:liou_int}\\
\mu_p &= \delta_0 \otimes \mu_0 + \Lie_f^\dagger \mu. \label{eq:liou_weak}
\end{align}

Equation \eqref{eq:liou_weak} is an equivalent short-hand expression to equation \eqref{eq:liou_int} for all $v$. Substituting in the test functions $v = 1, v = t$ to Liouville's equation returns the relations $\inp{1}{\mu_0} = \inp{1}{\mu_p}$ and $\inp{1}{\mu} = \inp{t}{\mu_p}$.

The measure \ac{LP} corresponding to \eqref{eq:peak_traj_std} with optimization variables $(\mu_0, \mu_p, \mu)$ is \cite{cho2002linear},
\begin{subequations}
\label{eq:peak_meas}
\begin{align}
p^* = & \ \rev{\sup} \quad \inp{p(x)}{\mu_p} \label{eq:peak_meas_obj} \\
    & \mu_p = \delta_0 \otimes\mu_0 + \Lie_f^\dagger \mu \label{eq:peak_meas_flow}\\
    & \inp{1}{\mu_0} = 1 \label{eq:peak_meas_prob}\\
    & \mu, \mu_p \in \Mp{[0, T] \times X} \label{eq:peak_meas_peak}\\
    & \mu_0 \in \Mp{X_0}. \label{eq:peak_meas_init}
\end{align}
\end{subequations}

Both $\mu_0$ and $\mu_p$ are probability measures by constraint \eqref{eq:peak_meas_prob}. 
\rw{A set of measures $(\mu_0, \mu_p, \mu)$ may be derived from a trajectory with initial condition $x_0^* \in X_0$ and a stopping time $t^*_p$ in which $x_p^* = x(t^*\mid x_0^*),$ $p(x_p^*) = P^*$, and  $x(t\mid x_0^*) \in X \ \forall t \in [0, t^*_p]$. }
The \rw{atomic measures are} $\mu_0 = \delta_{x = x_0^*}, \ \mu_p = \delta_{t = t^*_p} \rw{\otimes \delta_{x = x_p^*}}$ \rw{and $\mu$ is the occupation measure in times $[0, t_p^*]$ along $t \mapsto (t, x^*(t \mid x_0^*)).$} 
\rw{These measures} are solutions to constraints \eqref{eq:peak_meas_flow}-\eqref{eq:peak_meas_init}, \rw{which implies that} $p^* \geq P^*$. 
 There \rw{is no} relaxation gap ($p^* = P^*$) if the set $[0, T] \times X$ is compact \rw{with $X_0 \subset X$} (Sec. 2.3 of \cite{fantuzzi2020bounding} and \cite{lewis1980relaxation}). The moment-\ac{SOS} hierarchy \rw{of \acp{SDP}} may be used to find a sequence of upper bounds to $p^*$. The method in \cite{fantuzzi2020bounding} approaches the moment-\ac{SOS} hierarchy from the dual side, involving \ac{SOS} constraints in terms of an auxiliary function $v(t, x)$ \rev{(dual variable to constraint \eqref{eq:peak_meas_flow})}. 
 \rw{Near-optimal trajectory extraction can be attempted through \ac{SDP} solution matrix factorization  \cite{miller2020recovery} (if a low rank condition holds) and through sublevel set methods \cite{fantuzzi2020bounding, garcia2021superresolution}.}

\subsection{Safety}

This subsection reviews methods to verify that trajectories starting from $X_0 \subset{X} $ do not enter an unsafe  set  $X_u \subset X$. 
In Figure \ref{fig:flow_unsafe}, the unsafe set $X_u =  \{x \in \R^2 \mid x_1^2 + (x_2+0.7)^2 \leq 0.5^2, \ \sqrt{2}/2 (x_1 + x_2 - 0.7) \leq 0\}$ is the red half-circle to the bottom-left of trajectories.

\begin{figure}[ht]
        \centering
        \includegraphics[width=0.5\linewidth]{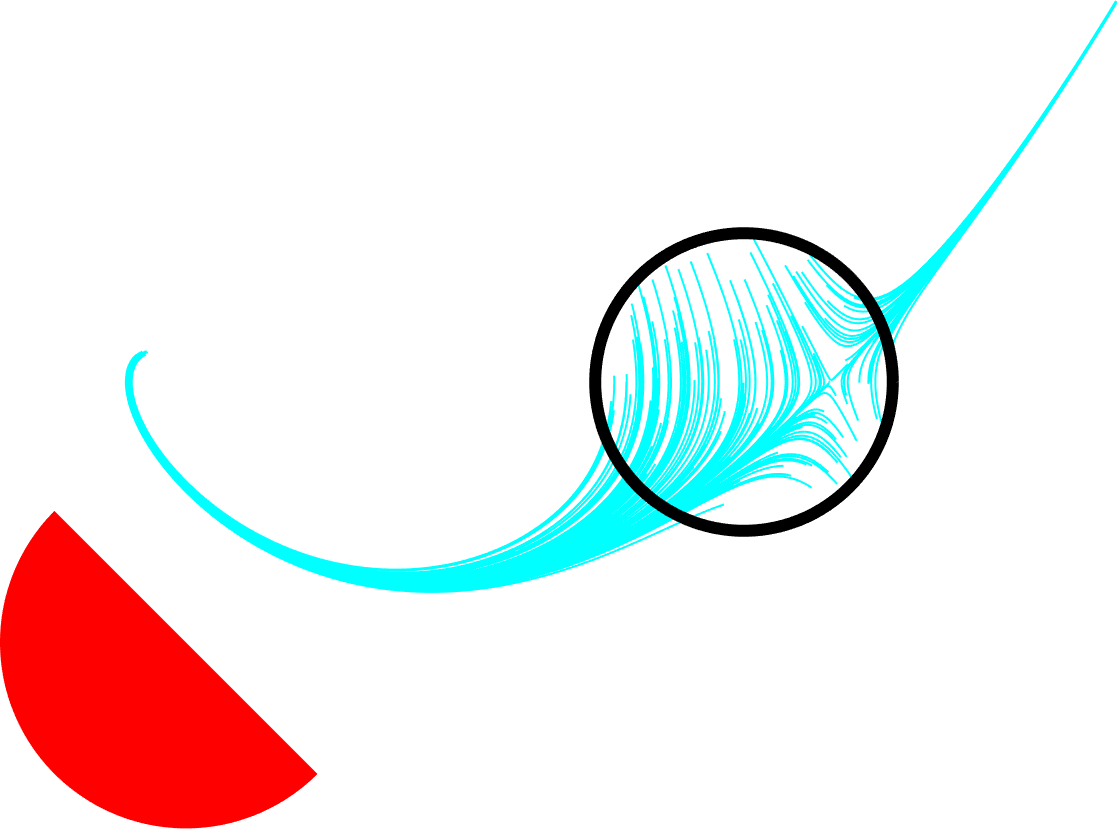}
        \caption{\label{fig:flow_unsafe} Trajectories of Flow system \eqref{eq:flow}}
        
    \end{figure}

Sufficient conditions certifying safety can be obtained using  barrier functions \cite{prajna2004safety, prajna2006barrier}. However, these conditions do not provide a quantitative measurement for the safety of trajectories. Safety margins as introduced in \cite{miller2020recovery} quantify the safety of trajectories through the use of maximin peak estimation. Assume that $X_u$ is a basic semialgebraic set with description $X_u = \{x \mid p_i(x) \geq 0, \ i = 1, \ldots, N_u\}$. A point $x$ is in $X_u$ if all $p_i(x) \geq 0$. If at least one $p_i(x)$ remains negative for all points along trajectories $x(t \mid x_0), \ x_0 \in X_0$, then no point starting from $X_0$ enters $X_u$, and trajectories  are therefore  safe. The value $p^* =\max_i \min_{t, x_0} p_i(x(t \mid x_0))$ is called the safety margin, and a negative safety margin $p^* < 0$ certifies safety. The moment-\ac{SOS} hierarchy \rev{(Appendix \ref{sec:moment_sos})} can be used to find upper bounds $p^*_d > p^*$ at degrees $d$, and safety is assured if any upper bound is negative $0 > p^*_d > p^*$. Figure \ref{fig:flow_safety_margin} visualizes the safety margin for the Flow system \eqref{eq:flow}, where the bound of $p^* \leq -0.2831$ was found at the degree-$4$ relaxation.

        \begin{figure}[ht]
        \centering
        \includegraphics[width=0.5\linewidth]{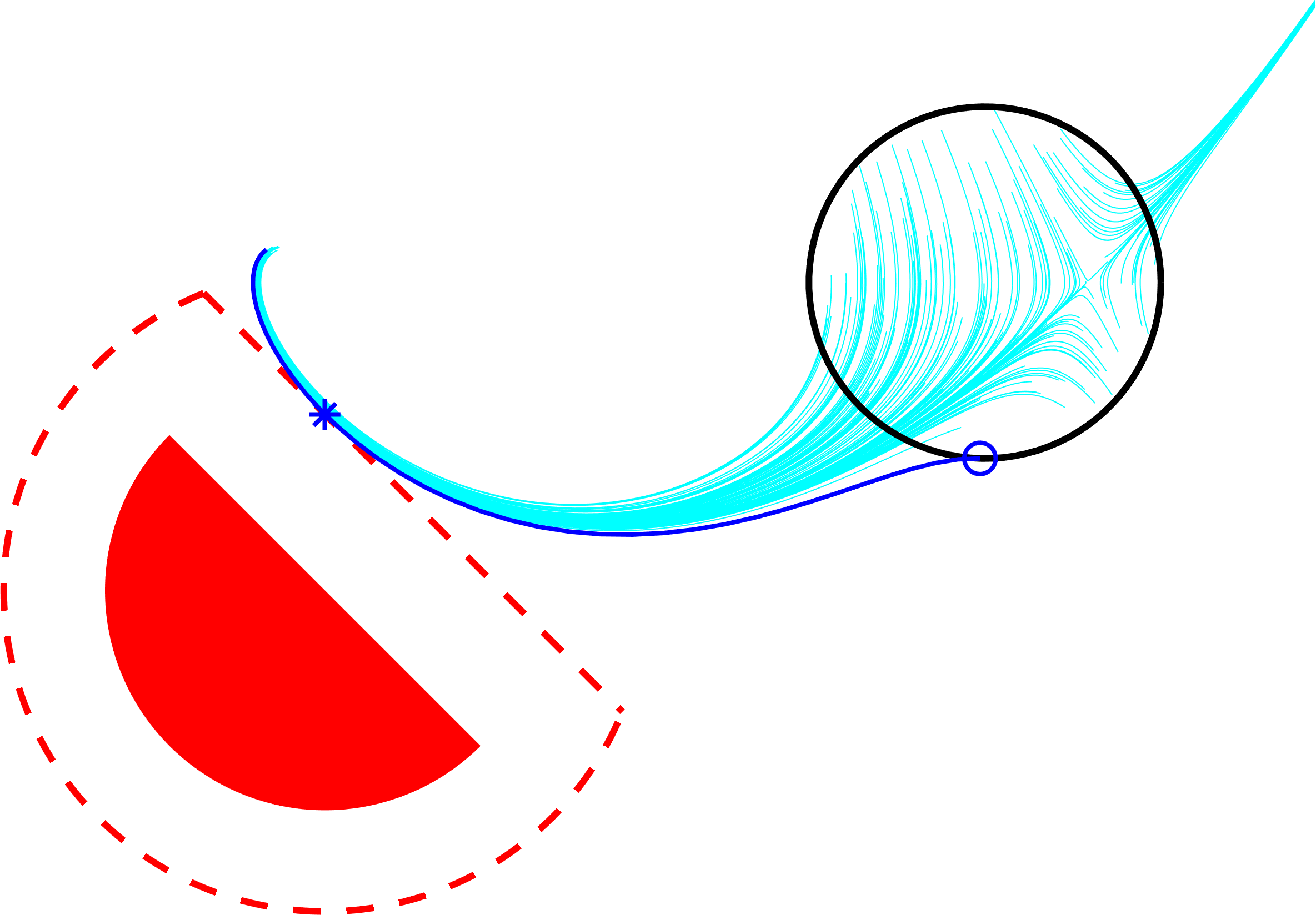}
        \caption{\label{fig:flow_safety_margin} Flow system is safe,  $p^* \leq -0.2831$}
    \end{figure}

The safety margin of trajectories will generally change if the unsafe set $X_u$ is reparameterized even in the same coordinate system. Let $q \leq 0$ and $s > 0$ be violation and scaling parameters for the enlarged unsafe set $(X_u^{s})_q = \{x \mid  q\leq 0.5^2 - x_1^2 + (x_2+0.7)^2, \ q \leq -s (x_1 + x_2 - 0.7) \}$. The original unsafe set may be interpreted as $X_u = (X_u^{s=\sqrt{2}/2})_{q=0}$.
Figure \ref{fig:safety_scaling} visualizes contours of regions $(X_u^s)_q$ 
as $q$ decreases from $0$ down to $-2$ for sets with scaling parameters $s = 5$ and $s = 1$. The safety margins of trajectories with respect to $X_u^{s}$ 
will vary as $s$ changes, 
even as the same set $X_u$ is represented in all cases. This is precisely the difficulty addressed in the present paper: developing scale invariant quantitative safety metrics.

    \begin{figure}[ht]
        \centering
        \includegraphics[width=\linewidth]{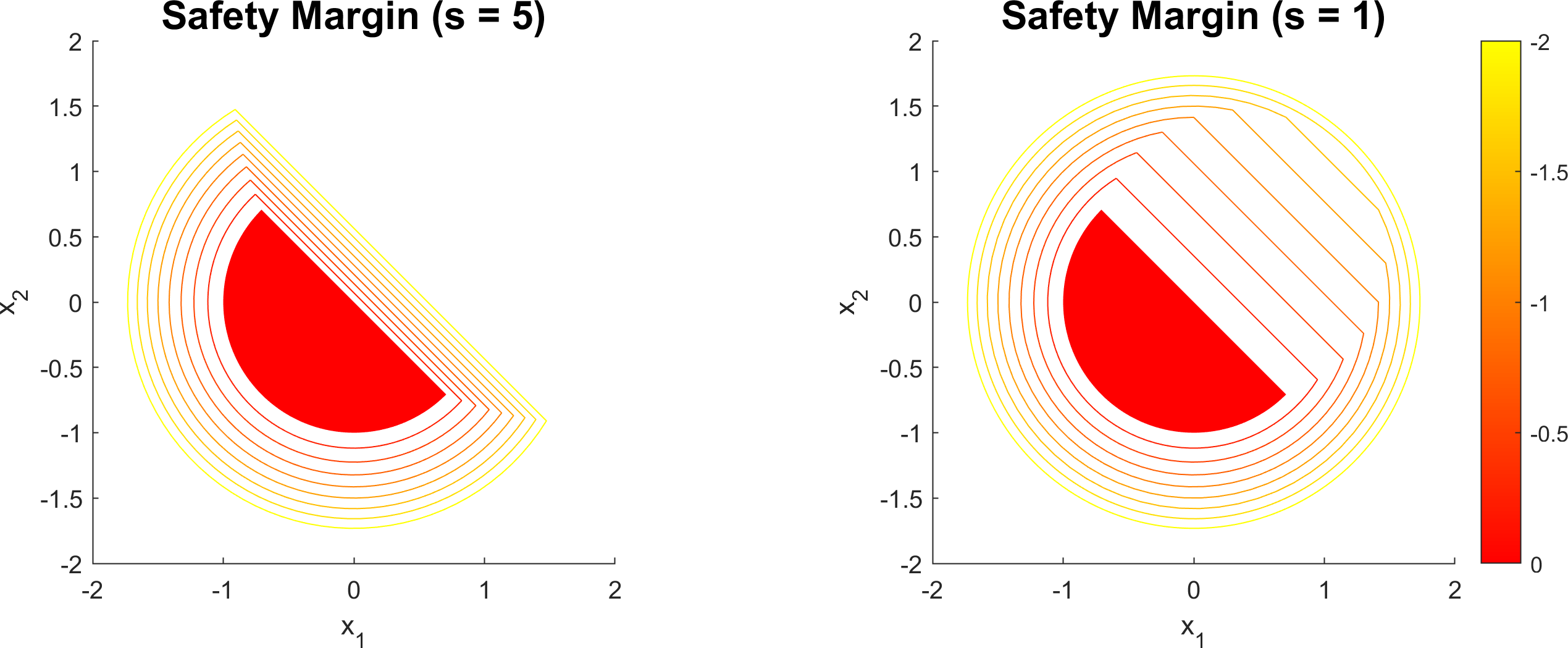}
        \caption{\label{fig:safety_scaling} Safety margin scaling contours}
    \end{figure}

\section{Distance Estimation Program}
\label{sec:distance_lp}

The goal of this paper is to develop a computationally tractable framework to compute the worst case (over all initial conditions) distance of closest approach to an unsafe set. Specifically, we aim to solve the following problem.
\begin{problem}[Distance \rev{Calculation}]\label{prob:problem1} Given semi-algebraic initial condition ($X_\rw{0})$ and unsafe ($X_u$) sets, solve the optimization problem \eqref{eq:dist_traj}.
\end{problem}
In many practical situations, it is sufficient to obtain interpretable lower bounds on the minimum distance. 
Thus, the following problem is also of interest.
\begin{problem}[Distance \rev{Estimation}]\label{prob:problem2} Given semi-algebraic initial condition set ($X_\rw{0}$), an unsafe ($X_u$) set, and a positive integer  $d$ (degree),  find a lower bound $p^*_d \leq P^*$ to the solution of optimization \eqref{eq:dist_traj}.
\end{problem}
As we will show in this paper (and under mild \rev{compactness and regularity} conditions), a convergent sequence of lower bounds $\{p^*_d\}$ that rise to  $ \lim_{d \rightarrow \infty} p_d^* = P^*$ may be constructed where each bound
$p^*_d$ is obtained by solving a finite dimensional \rw{\ac{SDP}}.

An optimizing trajectory of the Distance program \eqref{eq:dist_traj} may be described by a tuple $\rev{\ts^*} = (y^* \ x_0^*, \ t_p^*)$ \rw{using} 
Table \ref{tab:opt_traj_dist}.
    \begin{table}[ht]
        \centering
                \caption{\label{tab:opt_traj_dist}Characterization of optimal trajectory in distance estimation }
            \begin{tabular}{c l }
         $y^*$ &location on unsafe set of closest approach \\
         $x_0^*$ &initial condition to produce \rw{closest approach} \\
         $t^*_p$ &time to reach \rw{closest approach} from $x_0^*$ \\
        \end{tabular}
    \end{table}
        
        The relationship between these quantities for an optimal trajectory of \eqref{eq:dist_traj} is:
        \begin{equation}
            P^* = \rw{c(x(t^*_p \mid x_0^*); X_u)} = c(x(t^*_p \mid x_0^*), y^*).
        \end{equation}
        
    Figure \ref{fig:distance_example_reprise} plots the trajectory of closest approach to $X_u$ in dark blue. This minimal $L_2$ distance is $0.2831$, and the red curve is the level set of all points with a point-set distance $0.2831$ to $X_u$. 
    On the optimal trajectory, the blue circle is $x_0^* \approx (1.489, -0.3998)$, the blue star is $x_p^*\rw{=x(t^*\mid x_0)} \approx (0, -0.2997)$, and the blue square is $y^* \approx (-0.2002, -0.4998)$. The closest approach of $0.2831$ occurred at time $t^* \approx 0.6180$. Figure \ref{fig:safety_distance} plots the distance and safety margin contours for the set $X_u$. \rw{These distance contours for a given metric $c$ are independent of the way that $X_u$ is defined (within the same coordinate system).}
       \begin{figure}[h]
        \centering
        \includegraphics[width=0.5\linewidth]{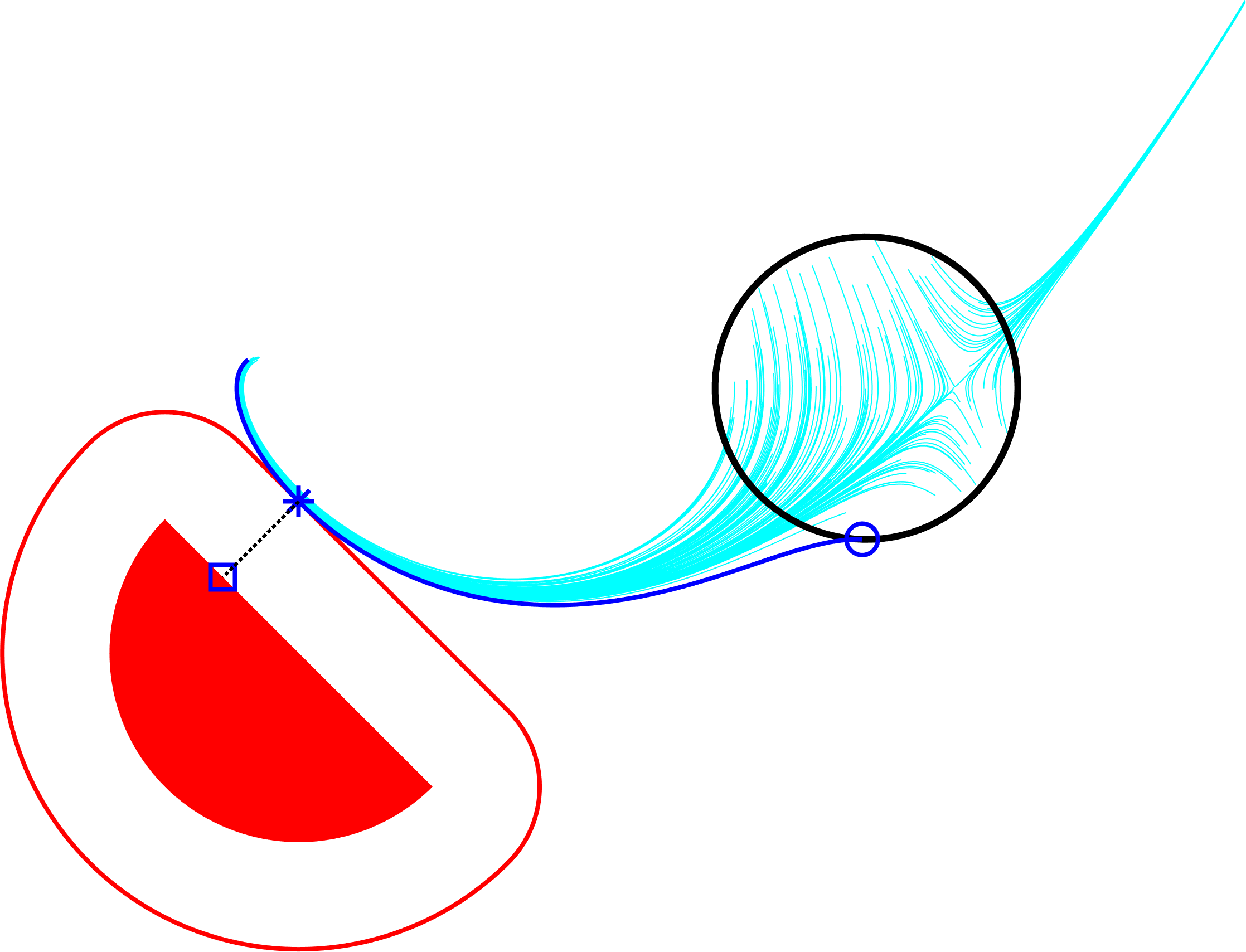}
        \caption{\label{fig:distance_example_reprise}$L_2$ bound of $0.2831$}
    \end{figure}

    \begin{figure}[ht]
        \centering
        \includegraphics[width=0.9\linewidth]{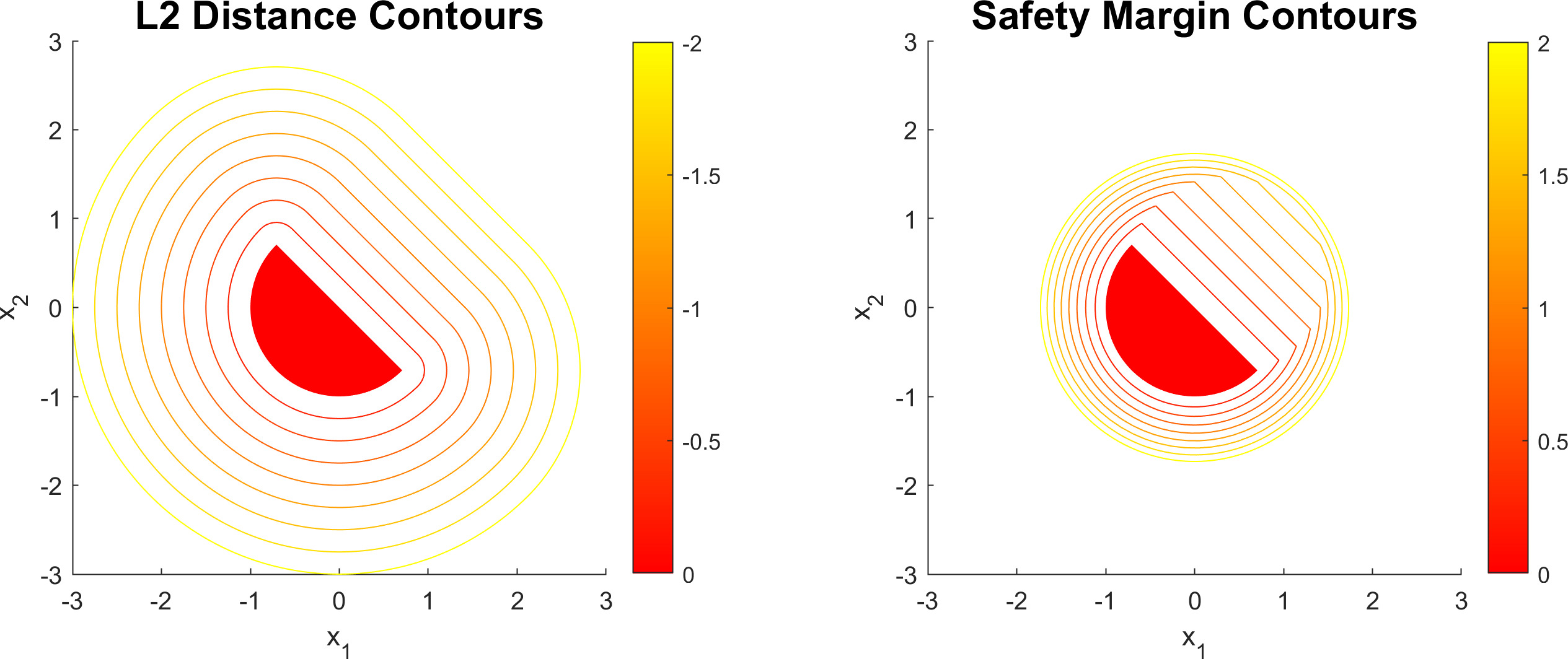}
        \caption{\label{fig:safety_distance} Comparison between $L_2$ distance and safety margin contours}
    \end{figure}

 \subsection{Assumptions}
The following assumptions are made in \rev{P}rogram \eqref{eq:dist_traj}:
\begin{enumerate}
\item[A1] The sets $[0, T], \ X , \ X_u\rw{, \ X_0}$ are \rw{all} compact, \rw{$X_0 \subset X$, and $n$ is finite.}.
    \item[A2] The function $f(t, x)$ is Lipschitz in each argument \rev{in the compact set $[0, T] \times X$}.
    \item[A3] The cost $c(x, y)$ is $C^0$ \rev{in $X\times X_u$.}
    \item[A4] \rev{If $x(t \mid x_0) \, \rw{\in \partial X}$ for some $t \in [0, T], \ x_0 \in X_0$, then $x(t' \mid x_0) \not\in X \ \forall t' \in \rw{(}t, T].$
    }
\end{enumerate}
 
\rev{
 A3 relaxes the requirement that $c$ should be  a metric, allowing for costs such as $\norm{x-y}_2^2$ in addition to the metric $\norm{x-y}_2$. The combination of A1 and A3 enforce that $c(x,y)$ is bounded inside $X \times X_u$ by the Weierstrass extreme value theorem.
Assumption A4 requires that trajectories do not return to \rw{$X$ after contacting the boundary $\partial X$}.}
\begin{remark}
\rw{An alternate (inequivalent and not contained) assumption that can be used for convergent distance estimation instead of A4 is that every trajectory starting from $X_0$ stays in $X$ for times $t \in [0, T]$ (employed in \cite{fantuzzi2020bounding}). Letting $\{X^\epsilon\}_{\epsilon \geq 0}$ be a family of compact (A1) sets under the relations $X \subset X^\epsilon \subset X$ and $X^{\epsilon'} \supset X^\epsilon$ for all $0 < \epsilon < \epsilon' < \infty$, the assumption of non-return (A4) for each $X^\epsilon$ strictly contains the assumption $X$ is an invariant set for all trajectories starting from $X_0$.} 
\end{remark}

 
\subsection{Measure Program}
\rev{The problem of $c^* = \min_{(x, y) \in X \times X_u} c(x,y)$ is identical to $c^* = \min_{(x, y) \in X \times X_u} \inp{c(x,y)}{\delta_{x} \otimes \delta_y}$ for Dirac measures $\delta_{x} \otimes \delta_y$. The Dirac restriction may be relaxed to minimization over the set of probability measures $c^* = \inp{c(x, y)}{\eta}, \eta \in \Mp{X \times X_u}, \inp{1}{\eta}= 1$ with no change in the objective value $c^*$.}
An infinite-dimensional \rw{convex} \ac{LP} in measures $(\mu_0, \mu_p, \mu, \eta)$ to \rev{bound from below} the distance closest approach to $X_u$ starting from $X_0$ \rw{may be developed}.

\begin{theorem}
\label{thm:meas_lower}
\rw{Suppose that $f \in C^0$ and A3 holds. Further impose that if $X_0 \subset X$ are both compact then A4 holds. Under these conditions, a lower bound for $P^*$ is,}
\begin{subequations} 
\label{eq:dist_meas}
    \begin{align}
    p^* &= \ \rev{\inf} \quad \inp{c(x, y)}{\eta} \label{eq:dist_meas_obj}\\
    &\pi^{x}_\# \eta = \pi^{x}_\# \mu_p \label{eq:dist_meas_marg}\\
    &\mu_p = \delta_0 \otimes\mu_0 + \Lie_f^\dagger \mu   \label{eq:dist_meas_liou}\\
    &\inp{1}{\mu_0} = 1 \label{eq:dist_meas_prob}\\
    & \mu_0 \in \Mp{X_0}, \  \eta \in \Mp{X \times X_u} \label{eq:dist_meas_joint} \\
    & \mu_p, \ \mu \in \Mp{[0, T] \times X}&\label{eq:dist_meas_occ}
\end{align}
\end{subequations}
\end{theorem}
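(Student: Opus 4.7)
The plan is to establish the weak-duality-style inequality $p^* \leq P^*$ by exhibiting, for each feasible triple $(t, x_0, y)$ of the original distance program \eqref{eq:dist_traj}, a feasible quadruple of measures $(\mu_0, \mu_p, \mu, \eta)$ for \eqref{eq:dist_meas} whose objective value equals the trajectory cost $c(x(t \mid x_0), y)$. Taking the infimum on the trajectory side then forces the infimum on the measure side to be no larger. This is the standard route used to prove the lower-bound direction of occupation-measure relaxations (as in \cite{cho2002linear, lewis1980relaxation, fantuzzi2020bounding}); the novelty here is only the addition of the transport-style coupling measure $\eta$ and its marginal constraint.

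The construction proceeds as follows. Fix a feasible triple $(t^\dagger, x_0^\dagger, y^\dagger) \in [0,T]\times X_0 \times X_u$ and let $x^\dagger(\tau) = x(\tau \mid x_0^\dagger)$ be the associated trajectory, which under A2 exists and is unique on $[0, t^\dagger]$. Assumption A4 is what guarantees that once we commit to the terminal time $t^\dagger$, the trajectory segment $x^\dagger([0, t^\dagger])$ lies in $X$ (otherwise the terminal state would be outside $X$ and there would be nothing to measure). Define the atomic probability measure $\mu_0 = \delta_{x_0^\dagger}$ on $X_0$, the atomic peak measure $\mu_p = \delta_{(t^\dagger, x^\dagger(t^\dagger))}$ on $[0,T]\times X$, the occupation measure $\mu$ as the pushforward of Lebesgue measure on $[0, t^\dagger]$ through the map $\tau \mapsto (\tau, x^\dagger(\tau))$, and finally the coupling $\eta = \delta_{(x^\dagger(t^\dagger), y^\dagger)}$ on $X\times X_u$.

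Each constraint of \eqref{eq:dist_meas} must be verified in turn. The mass condition $\inp{1}{\mu_0} = 1$ is immediate. Liouville's equation \eqref{eq:dist_meas_liou} reduces, after pairing with any $v \in C^1([0,T]\times X)$, to the fundamental theorem of calculus applied along the trajectory $x^\dagger$: $v(t^\dagger, x^\dagger(t^\dagger)) - v(0, x_0^\dagger) = \int_0^{t^\dagger} \Lie_f v(\tau, x^\dagger(\tau))\, d\tau$, which is exactly the integral form \eqref{eq:liou_int} for the chosen measures. The marginal condition \eqref{eq:dist_meas_marg} holds because $\pi^x_\# \eta = \delta_{x^\dagger(t^\dagger)} = \pi^x_\# \mu_p$. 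Finally, the objective evaluates to $\inp{c(x,y)}{\eta} = c(x^\dagger(t^\dagger), y^\dagger)$, which is the value of \eqref{eq:dist_traj} at the chosen feasible triple. Consequently $p^* \leq c(x^\dagger(t^\dagger), y^\dagger)$, and taking the infimum over $(t^\dagger, x_0^\dagger, y^\dagger)$ yields $p^* \leq P^*$.

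The main obstacle is handling the case where the infimum in \eqref{eq:dist_traj} is not attained, so that no single optimal triple exists to lift. Here one can argue by approximation: pick a sequence of feasible triples whose costs converge to $P^*$, construct the corresponding quadruples of measures, and observe that each produces a feasible value of \eqref{eq:dist_meas} bounded above by the trajectory cost, which passes to the limit. Compactness of $[0,T]\times X_0 \times X_u$ from A1 together with continuity of $c$ from A3 and continuous dependence of the flow on initial data from A2 guarantees that these limits behave as expected. A secondary subtlety worth addressing briefly is that A4 is essential for the construction to make sense: without it, a trajectory might leave $X$ and re-enter, and the occupation measure $\mu$ would need to be restricted to the initial connected segment of time in $X$, which is exactly the interpretation enforced by A4.
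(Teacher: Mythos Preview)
Your proposal is correct and follows essentially the same approach as the paper: construct Dirac/occupation measures from an arbitrary feasible trajectory triple, verify the constraints of \eqref{eq:dist_meas}, observe the objective equals the trajectory cost, and take the infimum. Your version is in fact more thorough than the paper's in explicitly checking Liouville via the fundamental theorem of calculus and in discussing the non-attained-infimum and role-of-A4 subtleties, but the argument is the same.
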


\begin{proof}
\rev{Let $\ts = (y, x_0, t_p) \in X_u \times X_0 \times [0, T]$ be a tuple representing a trajectory with $x_p = x(t_p \mid x_0)$ achieving a distance $P = c(x_p, y)$. A set of measures \eqref{eq:dist_meas_joint}-\eqref{eq:dist_meas_occ} satisfying constraints \eqref{eq:dist_meas_marg}-\eqref{eq:dist_meas_occ} may be constructed from the tuple $\ts$.} The initial measure $\mu_0 =  \delta_{x=x_0}$, the peak (free-time terminal) measure $\mu_p = \delta_{t=t_p} \otimes \delta_{x=x_p}$ \rw{with $x_p= x(t_p\mid x_0)$}, and the joint measure $\eta = \delta_{x_p} \otimes \delta_{y=y}$, are all rank-one atomic probability measures. The measure \rev{$\mu$} is the \rw{occupation measure of $t \mapsto (t, x(t \mid x_0))$ in times $[0, t_p]$.  
} The distance objective \eqref{eq:dist_meas_obj} \rev{for the tuple $\ts$} may be evaluated as\rev{,}
\begin{equation}
    \inp{c(x, y)}{\eta} = \inp{c(x, y)}{\delta_{x=x_p} \otimes \delta_{y=y}} = c(x_p, y) = P. 
\end{equation}
\rw{The feasible set of \eqref{eq:dist_meas_marg}-\eqref{eq:dist_meas_occ} contains all measures constructed from trajectories by the above process, which immediately implies that $p^* \leq P^*$.}
\end{proof}

\begin{remark}
As a reminder, the term $\pi^{x}_\#$ from constraint \eqref{eq:dist_meas_marg} is the operator performing $x$-marginalization. Constraint \eqref{eq:dist_meas_marg} ensures that the $x$-marginals of $\eta$ and $\mu_p$ are equal\rev{: $\forall w \in C(X): \inp{w(x)}{\eta(x, y)} = \inp{w(x)}{\mu_p(t,x)}$}.
\end{remark}




\subsection{Function Program}

Dual variables $v(t, x) \in C^1([0, T] \times X)$, $ w(x) \in C(X)$, $\gamma \in \R$ over constraints \eqref{eq:dist_meas_marg}-\eqref{eq:dist_meas_prob} must be introduced to derive the dual LP to \eqref{eq:dist_meas}.
The Lagrangian $\scL$ of problem \eqref{eq:dist_meas} is:
    \begin{align}
    \scL &=   \inp{c(x,y)}{\eta} + \inp{v(t, x) }{\delta_0 \otimes\mu_0 + \Lie_f^\dagger \mu -\mu_p }     \label{eq:lagrangian_distance}\\
    &+ \inp{w(x)}{\pi^{x}_\# \mu_p - \pi^{x}_\# \eta} +  \gamma(1 - \inp{1}{\mu_0}).\nonumber
\end{align}
\rev{Recalling that $\forall \eta \in \Mp{X \times Y}$, $\rw{w} \in C(X)$ the relation that $\inp{\rw{w}(x)}{\eta(x,y)} = \inp{\rw{w}(x)}{\pi_\#^x \eta(x)}$ holds,} the Lagrangian $\scL$ in \eqref{eq:lagrangian_distance} may be reformulated as,
\begin{align}
    \scL&=\gamma + \inp{v(0, x) - \gamma}{\mu_0} + \inp{c(x, y)-w(x)}{\eta} \\
    &+\inp{w(x) - v(t, x)}{\mu_p} + \inp{\Lie_f v(t, x)}{\mu}. \nonumber
    \end{align}


The dual of program \eqref{eq:dist_meas} is provided by,
\begin{subequations}
\label{eq:dist_cont}
\begin{align}
    d^* = & \rev{\sup}_{\gamma, v, w} \ \rev{\inf}_{\mu_0, \mu_p, \mu, \eta} \scL \\
    = & \rev{\sup}_{\gamma \in \R} \quad \gamma & \\
    & v(0, x) \geq \gamma&  &\forall x \in X_0 \label{eq:dist_cont_init}\\
    & c(x, y)\geq w(x) & & \forall (x, y) \in X \times X_u \label{eq:dist_cont_cw}\\
    & w(x) \geq v(t, x) & &\forall (t, x) \in [0, T] \times X \label{eq:dist_cont_wv}  \\
    & \Lie_f v(t, x) \geq 0 & &\forall (t, x) \in [0, T] \times X \label{eq:dist_cont_f}\\
    & w \in C(X) \label{eq:dist_cont_w}\\
    &v \in C^1([0, T] \times X). & \label{eq:dist_cont_poly}
\end{align}
\end{subequations}

\begin{theorem}
\label{thm:strong_duality_dist}
Strong duality \rw{with $p^*=d^*$ and} \rev{attainment of optima} \rw{occurs} under assumptions A1-\rev{A4}.
\end{theorem}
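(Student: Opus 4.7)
The plan is to recast \eqref{eq:dist_meas} and \eqref{eq:dist_cont} as an abstract primal--dual conic linear program paired by Lebesgue integration between $\mathcal{M}$ and $C$, and then invoke a zero--duality--gap theorem of the Anderson--Nash type whose hypotheses reduce to primal feasibility with finite value and weak--$\ast$ closedness of an image cone. Weak duality $p^\ast \geq d^\ast$ comes straight from the Lagrangian \eqref{eq:lagrangian_distance}: for any primal feasible $(\mu_0,\mu_p,\mu,\eta)$ and any dual feasible $(\gamma,v,w)$, integrating the inequalities \eqref{eq:dist_cont_init}--\eqref{eq:dist_cont_f} against the corresponding nonnegative measures and summing telescopes to $\inp{c(x,y)}{\eta} \geq \gamma$. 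Primal feasibility with finite value is already supplied by the atomic construction in the proof of Theorem \ref{thm:meas_lower}, and dual feasibility is witnessed by the constant choice $(v,w,\gamma)=(-M,-M,-M)$ with $M>0$ larger than $\sup_{X\times X_u} c$, using $\Lie_f(-M)=0$.

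For the absence of a gap I would study the image cone
\[
\mathcal{C} = \bigl\{\bigl(\mu_p - \delta_0 \otimes \mu_0 - \Lie_f^\dagger \mu,\ \pi^x_\# \eta - \pi^x_\# \mu_p,\ \inp{1}{\mu_0},\ \inp{c}{\eta}\bigr)\bigr\},
\]
indexed over the cone of admissible tuples \eqref{eq:dist_meas_joint}--\eqref{eq:dist_meas_occ}, and show it is weak--$\ast$ closed in the continuous dual of $C^1([0,T]\times X) \times C(X) \times \R \times \R$. Compactness in A1, regularity in A2--A3, the normalization $\inp{1}{\mu_0}=1$, and Liouville's equation \eqref{eq:dist_meas_liou} together pin down the uniform mass bounds $\inp{1}{\mu_0}=\inp{1}{\mu_p}=\inp{1}{\eta}=1$ and $\inp{1}{\mu}\leq T$, so Banach--Alaoglu places every minimizing sequence in a fixed weak--$\ast$ compact set on which the linear operators $\delta_0 \otimes (\cdot)$, $\Lie_f^\dagger$, and $\pi^x_\#$ are weak--$\ast$ continuous; closedness of $\mathcal{C}$ then follows by a standard diagonal extraction. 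Primal attainment of the common value is a corollary, since $\eta \mapsto \inp{c}{\eta}$ is weak--$\ast$ continuous by A3, so any weak--$\ast$ accumulation point of a minimizing sequence is an optimizer. Dual attainment at $\gamma = p^\ast$ can be recovered by a $C^1$--mollification of the value function $v^\ast(t,x) = \inf\{c(x(s\mid x_0),y) : s \in [t,T],\ x_0 \in X_0,\ x(t\mid x_0)=x,\ y \in X_u\}$ together with $w^\ast(x) = \inf_{y \in X_u} c(x,y)$, each of which is feasible for \eqref{eq:dist_cont} up to smoothing error.

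The hard part, and the one on which I would concentrate, is establishing weak--$\ast$ closedness of $\mathcal{C}$, because this step couples the adjoint transport operator $\Lie_f^\dagger$ with the marginalization constraint \eqref{eq:dist_meas_marg}, and without a no--return hypothesis a sequence of occupation measures could shed mass through $\partial X$ in the weak--$\ast$ limit, invalidating Liouville's equation. Assumption A4 is exactly what rules this out: any trajectory that reaches $\partial X$ is truncated there and never re--enters, so each $\mu^{(k)}$ along a minimizing sequence is supported on a fixed compact slice of $[0,T]\times X$ and the limit of $\Lie_f^\dagger \mu^{(k)}$ coincides with $\Lie_f^\dagger$ applied to the limit. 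The alternate forward--invariance hypothesis of the remark (as used in \cite{fantuzzi2020bounding}) serves the same purpose, and the rest of the proof goes through verbatim.
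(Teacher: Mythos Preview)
Your framework is the same as the paper's: both cast \eqref{eq:dist_meas}--\eqref{eq:dist_cont} as an abstract conic LP pair and verify the standard sufficient conditions for zero duality gap. The paper does this by invoking a packaged result (Theorem 2.6 of \cite{tacchi2021thesis}), checking compactness of supports (A1), uniform mass bounds (Lemma \ref{lem:bounded}), continuity of the defining maps (A2, A3), and primal feasibility (Theorem \ref{thm:meas_lower}). Your Anderson--Nash route via weak--$\ast$ closedness of the image cone is the unpacked version of the same argument.

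Where you go wrong is in the role you assign to A4. Closedness of $\mathcal{C}$ does \emph{not} hinge on trajectories not re-entering $X$: the Liouville constraint \eqref{eq:dist_meas_liou} is a distributional identity on abstract measures, and $\Lie_f^\dagger$ is weak--$\ast$ continuous simply because $\Lie_f v \in C([0,T]\times X)$ for each $v \in C^1$ whenever $f$ is continuous (A2). Nothing about $\partial X$ enters; the ``mass shedding'' scenario you describe is a concern for identifying feasible measures with genuine occupation measures of trajectories, which is the content of Theorem \ref{thm:relaxation_gap} ($p^\ast = P^\ast$), not of strong duality ($p^\ast = d^\ast$). In the paper A4 appears in the hypotheses only because Lemma \ref{lem:bounded} and Theorem \ref{thm:meas_lower} are stated under A1--A4, but its substantive use is deferred to the no-relaxation-gap argument.

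Your dual-attainment sketch is also off: mollifying the value function $v^\ast$ yields dual-feasible $(\gamma,v,w)$ with $\gamma$ \emph{arbitrarily close} to $p^\ast$, not equal, because the smoothing introduces an $\epsilon$-loss in each of \eqref{eq:dist_cont_init}--\eqref{eq:dist_cont_f}. That construction is precisely Theorem \ref{thm:relaxation_gap} and Corollary \ref{cor:smooth}, and it does not by itself give a $C^1$ optimizer. The attainment claim in the paper comes from the cited abstract duality theorem rather than from this approximation.
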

\begin{proof}
\rev{See} Appendix \ref{app:duality}.


\end{proof}

\begin{remark}
\label{rmk:chain_lower}
The continuous function $w(x)$ is a lower bound on the point set distance $c(x; X_u)$ by constraint \eqref{eq:dist_cont_cw}. The auxiliary function $v(t, x)$ is in turn a lower bound on $w(x)$ by constraint \eqref{eq:dist_cont_wv}. This establishes a chain of lower bounds $v(t, x) \leq w(x) \leq c(x; X_u)$ holding $\forall (t, x) \in [0, T] \times X$.
\end{remark}

\begin{theorem}
\label{thm:relaxation_gap}
Under assumptions A1-\rev{A4}, \rev{$d^* = P^*$}.
\end{theorem}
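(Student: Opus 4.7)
The plan is to use Theorems \ref{thm:meas_lower} and \ref{thm:strong_duality_dist} to reduce the claim to a no-relaxation-gap statement $p^* = P^*$. Theorem \ref{thm:meas_lower} already yields $p^* \leq P^*$ and Theorem \ref{thm:strong_duality_dist} gives $p^* = d^*$, so the only remaining task is to prove $p^* \geq P^*$; this chain then delivers $d^* = p^* = P^*$.

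To establish $p^* \geq P^*$, I would reduce the distance program \eqref{eq:dist_meas} to a peak-estimation measure LP with state cost given by the point-set distance $c(\cdot\,; X_u)$. The reduction has two halves. For any feasible $(\mu_0, \mu_p, \mu, \eta)$, disintegrate $\eta$ against its $x$-marginal (which equals $\pi^x_\# \mu_p$ by \eqref{eq:dist_meas_marg}); the conditional measures on $X_u$ satisfy $\int_{X_u} c(x,y)\, d\eta(y \mid x) \geq c(x; X_u)$ pointwise, so $\inp{c(x,y)}{\eta} \geq \inp{c(x; X_u)}{\mu_p}$. For the reverse, assumptions A1 and A3 combined with Berge's maximum theorem imply that $c(\cdot\,; X_u)$ is continuous on $X$ and that the minimizer set $\{y \in X_u : c(x,y) = c(x; X_u)\}$ is nonempty, compact, and upper hemicontinuous in $x$; the Kuratowski--Ryll-Nardzewski theorem then supplies a Borel-measurable selector $y^*(x)$, and the choice $\eta := (\mathrm{id} \times y^*)_\# \pi^x_\# \mu_p$ produces a feasible point attaining $\inp{c(x; X_u)}{\mu_p}$. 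Consequently $p^*$ equals the optimal value of the peak-minimization measure LP analogous to \eqref{eq:peak_meas} with continuous cost $c(x; X_u)$.

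The concluding step invokes the standard no-relaxation-gap theorem for occupation-measure peak estimation cited immediately before Problem \ref{prob:problem1}, namely Sec.~2.3 of \cite{fantuzzi2020bounding} together with \cite{lewis1980relaxation}, which equates the peak LP value with $\inf_{t, x_0} c(x(t \mid x_0); X_u) = P^*$ under A1--A2 and continuity of the state cost; A4 supplies the non-return condition required when trajectories may exit $X$, or one may equivalently invoke the invariance formulation mentioned in the remark following the assumptions. Combining $p^* = P^*$ with Theorem \ref{thm:strong_duality_dist} closes the chain $d^* = p^* = P^*$. The chief obstacle is verifying continuity of $c(\cdot\,; X_u)$ and the measurable-selection step, both of which reduce to compactness and Berge's theorem under A1--A3; the deeper infinite-dimensional no-gap content---ultimately a $C^1$ approximation of the value function of the minimum future distance via mollification---is imported from the cited peak-estimation literature rather than reconstructed here.
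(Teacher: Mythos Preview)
Your argument is correct and reaches the same destination as the paper, but by the primal route rather than the dual one. The paper works directly in \eqref{eq:dist_cont}: it picks $w(x)=c(x;X_u)$ (which is $C^0$ by A1, A3) and then imports the flow-map construction from Appendix~D of \cite{fantuzzi2020bounding} to build a $C^1$ auxiliary $v$ with $\gamma\geq P^*-\delta$, thereby showing $d^*\geq P^*-\delta$ for every $\delta>0$. You instead collapse the joint measure $\eta$ onto $\mu_p$ via disintegration and a Kuratowski--Ryll-Nardzewski selector, identify $p^*$ with the peak-minimization LP value for the continuous cost $c(\cdot;X_u)$, and then invoke the same no-gap result from \cite{fantuzzi2020bounding,lewis1980relaxation} on the primal side.

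Both arguments hinge on the identical imported fact (the $C^1$ value-function approximation underlying the peak no-gap theorem), so neither is more elementary. The paper's dual construction has the advantage of feeding directly into Corollary~\ref{cor:smooth}, which needs explicit near-optimal $(v,w)$ to then approximate by polynomials; your primal reduction is cleaner measure-theoretically but would require reopening the dual side to recover that corollary. One small point: the paper explicitly adapts the Fantuzzi construction to the non-return assumption A4 (modifying the terminal time in the auxiliary function), whereas you cite A4 as sufficient without detailing the adaptation; this is fine at the level of a proof sketch but would need the same patch if written out in full.
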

\begin{proof}
\rev{This proof will show that for every arbitrary $\delta > 0$, there a exists a feasible $(\gamma, v, w)$ such that $P^* - \delta \leq d^* \leq P^*$. It therefore follows that $d^* = P^*$ under A1-A4.}

The relation $d^* = p^*$ holds by strong duality from Theorem \ref{thm:strong_duality_dist}, and the bound $p^* \leq P^*$ is valid by measure construction in Theorem \ref{thm:meas_lower}. Therefore, the bound from above $d^* \leq P^*$ is valid.

\rev{The function $c(x; X_u)$ is $C^0$, so an admissible choice of $w(x)$ is $w(x) = c(x; X_u)$.}
Appendix D of \cite{fantuzzi2020bounding} provides a proof that \rev{ (minimizing) peak estimation with $C^0$ state cost $w(x) = c(x; X_u)$} may be approximated to arbitrary accuracy by a $C^1$ auxiliary function $v(t, x)$\rev{. Such a $v$ may be constructed by finding a function $W \in C^1([0, T] \times X)$ satisfying (a modification of equations D.2 and D.3 of \cite{fantuzzi2020bounding} to account for $\inf$ rather than $\sup$),
\begin{subequations}
\label{eq:fantuzzi_W}
\begin{align}
    &\Lie_f W(t, x) \geq -\delta/(5T) & & \forall (t, x) \in [0, T] \times X \\
    &w(x) \geq W(t,x) - (2/5) \delta & & \forall (t, x) \in [0, T] \times X \\
    & W(0,x) \geq \gamma & & \forall x \in X_0 \\
    & \gamma \geq P^* - (2/5)\delta,
    \end{align}
\end{subequations}
with $v$ found from $W$ by,
\begin{equation}
\label{eq:fantuzzi_v}
    v(t,x) = W(t,x) - (2/5)\delta- \delta/(5T) (T-t).
\end{equation}

The function $W(t,x)$ may be constructed from a flow map for trajectories of $f$ by following the steps of Lemma D.2 of \cite{fantuzzi2020bounding}. 
\rw{We note that the procedure in \cite{fantuzzi2020bounding} assumes that trajectories starting in $X_0$ remain in $X$ for all $t \in [0, T]$ (A.1 in \cite{fantuzzi2020bounding}). By utilizing the non-return assumption A4 modifying the constructed auxiliary function in D.21 of \cite{fantuzzi2020bounding} such that $t_2$ (terminal time) occurs at the minimum of $t_2=T$ or when the trajectory touches $\partial X$ for the first time, we can utilize Appendix D of \cite{fantuzzi2020bounding} in constructing $W$ for this proof and extend \cite{fantuzzi2020bounding}'s applicability to the non-return case}.
Feasible $(\gamma, v, w)$ may be therefore found such that the bounds $P^* - \delta \leq d^* \leq P^*$ hold for every $\delta > 0$, which completes the proof.}
\end{proof}

\rev{
\begin{cor}
\label{cor:smooth}
\rw{For every $\delta > 0$, there exists a $\gamma \in \R$ and smooth functions $w\in C^\infty(X), \ v \in C^\infty([0, T] \times X)$ such that $P^* - \delta \leq \gamma \leq P^*$ in \eqref{eq:dist_cont}. Additionally, the $w$ and $v$ functions can be taken to be polynomials.}
\end{cor}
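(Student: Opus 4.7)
The plan is to approximate the $C^0$ function $w$ and $C^1$ function $v$ furnished by Theorem~\ref{thm:relaxation_gap} by polynomials, then perturb these polynomial approximations downward by amounts proportional to the $C^1$ approximation error so that all four dual constraints are restored. Because polynomials are automatically $C^\infty$, the polynomial conclusion subsumes the smoothness claim.

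I would start by invoking Theorem~\ref{thm:relaxation_gap} with tolerance $\delta/2$ to produce a feasible triple $(\gamma, v, w)$ for \eqref{eq:dist_cont} with $P^* - \delta/2 \leq \gamma \leq P^*$. By compactness of $X$ and $[0,T] \times X$ (A1), the Stone--Weierstrass theorem yields a polynomial $\tilde w$ with $\|\tilde w - w\|_{C^0(X)} \leq \epsilon$, and a standard $C^1$-polynomial density result (Whitney-extend $v$ to an open neighborhood, mollify, and truncate the resulting Taylor series) yields a polynomial $\tilde v$ with $\|\tilde v - v\|_{C^1([0,T] \times X)} \leq \epsilon$, for any prescribed $\epsilon > 0$.

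Next, setting $M = 1 + \sup_{[0,T] \times X}\|f(t,x)\|$ (finite by A1--A2), I would define the perturbed polynomials
\begin{align*}
    w^*(x) &= \tilde w(x) - \epsilon, \\
    v^*(t,x) &= \tilde v(t,x) - M\epsilon(T-t) - 3\epsilon, \\
    \gamma^* &= \gamma - (4 + MT)\epsilon.
\end{align*}
A direct verification, using the bound $\|\Lie_f \tilde v - \Lie_f v\|_{C^0} \leq M\epsilon$, shows that $(\gamma^*, v^*, w^*)$ satisfies \eqref{eq:dist_cont_init}--\eqref{eq:dist_cont_f}: the linear-in-$t$ correction $-M\epsilon(T-t)$ contributes $+M\epsilon$ to $\partial_t v^*$ and so exactly cancels the worst-case Lie-derivative approximation error; the constant offsets $-\epsilon$ and $-3\epsilon$ carry the chain $c(x,y) \geq w(x) \geq v(t,x)$ through the approximation errors; and the cumulative initial-value gap of $(4 + MT)\epsilon$ is absorbed into $\gamma^*$. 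Choosing $\epsilon = \delta/(2(4 + MT))$ then delivers $P^* - \delta \leq \gamma^* \leq P^*$, completing the proof.

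The main obstacle is the $C^1$-polynomial density step: Stone--Weierstrass alone gives only $C^0$ density, so simultaneous approximation of $v$ together with $\partial_t v$ and $\nabla_x v$ requires first extending $v$ to a $C^1$ function on an open neighborhood of $[0,T] \times X$ (where compactness from A1 is essential), convolving with a smooth mollifier of sufficiently small bandwidth to produce a $C^\infty$ approximant that is $C^1$-close to $v$, and then replacing the mollified function by a sufficiently high-order Taylor polynomial expanded around an interior point. Once this density result is in hand, the remaining bookkeeping on the constants $M$ and $T$ across the four dual constraints is entirely routine.
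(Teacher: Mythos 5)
Your proposal takes essentially the same route as the paper's proof: polynomially approximate the $C^0$ function $w$ and the $C^1$ function $v$ from Theorem~\ref{thm:relaxation_gap}, then shift $w$ down by a constant and $v$ down by a constant plus a linear-in-$t$ correction so that the Lie-derivative and chain-of-lower-bound constraints \eqref{eq:dist_cont_init}--\eqref{eq:dist_cont_f} are restored; the paper differs only in bookkeeping (it approximates $c(x;X_u)-\epsilon$ directly rather than the $w$ returned by Theorem~\ref{thm:relaxation_gap}, and cites Theorem~1.1.2 of \cite{llavona1986approximation} for $C^1$-polynomial density rather than reconstructing it). The one point worth tightening in your sketch is the ``truncate the Taylor series'' step: Taylor polynomials of a general $C^\infty$ mollification of $v$ need not converge to it on a compact set, so you must either convolve with an analytic kernel (e.g.\ a Gaussian, so that the mollified function is entire and its Taylor polynomials do converge in $C^1$ on compacta), use a polynomial kernel such as the Landau kernel directly, or simply cite the standard $C^k$-Weierstrass theorem as the paper does; with that repair the constant-tracking through $M = 1+\sup\|f\|$ and the choice $\epsilon = \delta/(2(4+MT))$ is correct.
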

\begin{proof}
For every $\epsilon > 0$, there exists a Stone-Weierstrass approximation $\bar{w}(x) \in \R[x]$ over the compact set $X$ to the $C^0$ continuous function $c(x; X_u)$ such that \rw{$\norm{(c(x; X_u)-\epsilon) - \bar{w}(x)}_{C^0(X)} \leq \epsilon$}
. The function $\rw{\bar{w}}(x)$ is therefore a lower bound for $c(x; X_u)$ and satisfies constraints \rw{\eqref{eq:dist_cont_cw}} and \eqref{eq:dist_cont_w}. A $C^1$ function $v(t,x)$ may be constructed from \eqref{eq:fantuzzi_W} and \eqref{eq:fantuzzi_v} (following process from Theorem \ref{thm:relaxation_gap}) at a tolerance of \rw{$\delta' \in  (0, (2/5)\delta)$}. This auxiliary function $v$ may in turn be approximated \rw{using Theorem 1.1.2 of \cite{llavona1986approximation} by a polynomial} $G \in \R[t,x]$ such that \rw{$\norm{G - v}_{C^1} < \epsilon$. 
By following the process from the proof of Theorem 4.1 of \cite{fantuzzi2020bounding} with $i$-coordinate dynamics $f_i(t, x)$, we can choose $\epsilon$ such that
\begin{equation*}
    \epsilon < \delta'/\left(\max\left[2, 2T, 2T \max_i \norm{f_i(t, x)}_{[0, T] \times X}  \right]\right).
\end{equation*}

The collection of the polynomial auxiliary function $\bar{v}(t, x) = G(t, x) - \delta(1 - t/(2T))$  with the polynomial $\bar{w}$ and scalar $\gamma = P^* - \delta$ together satisfies inequalities \eqref{eq:dist_cont_init}-\eqref{eq:dist_cont_poly} strictly for each given $\delta$, thus proving the theorem.
}
\end{proof}
}

\section{Finite-Dimensional Programs}
\label{sec:distance_finite}
This section presents finite-dimensional \ac{SDP} truncations to the inifinite-dimensional \acp{LP} \eqref{eq:dist_meas} and \eqref{eq:dist_cont}.

\rev{
\subsection{Approximation Preliminaries}
We introduce notation and concepts about moments and \ac{SOS} polynomials that will be used in subsequent finite-dimensional programs. Refer to Appendix \ref{sec:moment_sos} for further detail (e.g. Archimedean structure, moment-\ac{SOS} hierarchy, conditions of convergence).
A basic semialgebraic set $\K = \{x \mid g_i(x) \geq 0, \ i = 1, \ldots, N_c\}$ is a set formed by a finite set of bounded-degree polynomial constraints. The $\alpha$-moment of a measure $\mu$ is $\bm_\alpha = \inp{x^\alpha}{\mu}$. \rw{Assuming that each constraint polynomial $g_k(x)$ has a representation as $g_k(x) = \sum_{\sigma \in \N^n} g_{k \sigma} x^\sigma$, then } the matrix $\M_d(\K \bm)$ formed by a moment sequence $\bm$ is the block-diagonal matrix formed by $\diag{[\bm_{\alpha +\theta}]_{\alpha, \theta \in \N^n_{\leq d}}, \forall k: [\sum_{\sigma \in \N^d} g_{k \sigma} \bm_{\alpha+\theta+\sigma}]_{\alpha, \theta \in \N^n_{\leq d - \deg{g_k}/2}}}$. 

A polynomial $p(x)$ is \ac{SOS} $(p(x) \in \Sigma[x])$ if there exists a finite integer $s$, a polynomial vector $v(x) \in \R[x]^s$, and a \ac{PSD} matrix $Q \in \psd_+^s$, such that $p(x) = v(x)^T Q v(x)$. \ac{SOS} polynomials are nonnegative over $\R^n$. A polynomial is \ac{WSOS} over a set $\K$ (expressed as $p(x) \in \Sigma[\K]$ if there exists $\forall k=0..N_c: \sigma_k \in \Sigma[x]$ such that $p(x) = \sigma_0(x) + \sum_k g_k(x) \sigma_k(x)$.

}
\subsection{LMI Approximation}
In the case where $c(x, y)$ and $f(t,x)$ are polynomial, \eqref{eq:dist_meas} may be approximated with a converging hierarchy of \acp{SDP}. Assume that that $X_0$, $X$,  and $X_u$ are Archimedean basic semialgebraic sets, each defined by a finite number of bounded-degree polynomial inequality constraints $X_0 = \{\rw{x \mid \ } g^0_k(x) \geq 0\}_{k=1}^{N_0}$, $X = \{\rw{x \mid\ } g^X_k(x) \geq 0\}_{k=1}^{N_X}
$, and $X_u = \{\rw{x \mid \ }g^U_{k} (x) \geq 0\}_{k=1}^{N_U}$.

The polynomial inequality constraints for $X_0, X, X_u$ are of degrees $d^0_k, d_k, d_k^U$ respectively.
The Liouville equation in \eqref{eq:dist_meas_liou} enforces a countably infinite set of linear constraints indexed by all possible $\alpha \in \N^n, \ \beta \in \N$,
\begin{align}
\label{eq:liou_mom}
    \inp{x^\alpha}{\mu_0} \delta_{\beta 0} + \inp{\Lie_f( x^\alpha t^\beta)}{\mu} - \inp{x^\alpha t^\beta}{\mu_p} &= 0.
\end{align}
The expression $\delta_{\beta 0}$ is the Kronecker Delta taking a value $\delta_{\beta 0} = 1$ when $\beta = 0$ and $\delta_{\beta 0}=0$ when $\beta \neq 0$. Let $(\rbm^0, \rbm^p, \rbm, \rbm^{\eta})$ be moment sequences for the measures $(\mu_0, \mu_p, \mu, \eta)$. Define $\textrm{Liou}_{\alpha \beta}(\rbm^0, \rbm, \rbm^p)$ as the linear relation induced by  \eqref{eq:liou_mom} at the test function $x^\alpha t^\beta$ in terms of moment sequences. The polynomial metric $c(x, y)$ may be expressed as $\sum_{\alpha, \gamma} c_{\alpha \gamma} x^\alpha y^\gamma$ for multi-indices $\alpha, \gamma \in \N^n$. The complexity of dynamics $f$ induces a degree $\Tilde{d}$ as $\Tilde{d} = d + \ceil{\textrm{deg}(f)/2} - 1$.
The degree-$d$ \ac{LMI} relaxation of \eqref{eq:dist_meas} with moment sequence variables $(\rbm^0, \rbm^p, \rbm, \rbm^\eta)$ is
\begin{subequations}
\label{eq:dist_lmi}
\begin{align}
    p^*_d = & \min \quad \textstyle\sum_{\alpha,\gamma} c_{\alpha \gamma} \rbm_{\alpha \gamma}^\eta. \label{eq:dist_lmi_obj} \\
    &\rbm^\eta_{\alpha 0} = \rbm^p_{\alpha 0} & &\forall \alpha \in \N^{n}_{\leq 2d} \label{eq:dist_lmi_marg}\\
    &\textrm{Liou}_{\alpha \beta}(\rbm^0, \rbm^p, \rbm) = 0 & &\forall (\alpha, \beta) \in \N^{n+1}_{\leq 2d} \label{eq:dist_lmi_flow}\\
    & \rbm^0_0 = 1 \label{eq:dist_lmi_prob} \\
    & \M_d(\rev{X_0} \rbm^0)  \succeq 0 \\
    &\M_d(\rev{([0, T] \times X)}\rbm^p) \succeq 0 \\
    &\M_{\Tilde{d}}(\rev{([0, T] \times X)}\rbm) \succeq 0 \\
    &\M_d(\rev{(X \times X_u)}\rbm^\eta) \succeq 0 \label{eq:dist_lmi_joint}.
\end{align}
\end{subequations}
Constraints \eqref{eq:dist_lmi_marg}-\eqref{eq:dist_lmi_prob} are finite-dimensional versions of constraints \eqref{eq:dist_meas_marg}-\eqref{eq:dist_meas_prob} from the measure LP. \rw{In order to ensure convergence $\lim_{d \rightarrow \infty} p^*_d = p^*$ we must establish that all moments of measures are bounded.}

\begin{lem}
\label{lem:bounded}
\rev{The masses of all measures in \eqref{eq:dist_meas} are finite (\rw{uniformly} bounded) if A1-\rev{A4} hold.}
\end{lem}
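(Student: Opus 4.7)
The plan is to bound the masses of the four measures by chasing the constraints of \eqref{eq:dist_meas} one at a time, using the compactness of the supports from A1 to then promote mass bounds to uniform bounds on all moments.

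First I would pin down $\mu_0$ directly: constraint \eqref{eq:dist_meas_prob} gives $\inp{1}{\mu_0} = 1$. Next, to handle $\mu_p$ I would plug the constant test function $v \equiv 1$ into Liouville's equation \eqref{eq:dist_meas_liou}; since $\Lie_f 1 = 0$, this collapses to $\inp{1}{\mu_p} = \inp{1}{\delta_0 \otimes \mu_0} = 1$. The joint measure $\eta$ is then pinned down from the marginal constraint \eqref{eq:dist_meas_marg}: taking the test function $w(x) \equiv 1$ yields $\inp{1}{\eta} = \inp{1}{\pi^{x}_\# \eta} = \inp{1}{\pi^{x}_\# \mu_p} = \inp{1}{\mu_p} = 1$.

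The only nontrivial case is the occupation measure $\mu$. Here I would substitute the test function $v(t,x) = t$ into \eqref{eq:dist_meas_liou}: since $\Lie_f t = 1$ and $v(0,x) = 0$, the Liouville relation reduces to $\inp{1}{\mu} = \inp{t}{\mu_p}$. Because $\mu_p$ is supported on $[0,T]\times X$ (A1) and has unit mass, we obtain $\inp{1}{\mu} = \inp{t}{\mu_p} \leq T \inp{1}{\mu_p} = T$. Thus all four measures have finite mass, with bounds ($1,1,T,1$) that are independent of the particular feasible solution, giving the uniform aspect of the claim.

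Finally, since the supports $X_0$, $[0,T]\times X$, and $X \times X_u$ are all compact by A1, every monomial $x^\alpha$, $t^\beta$, $y^\gamma$ is bounded in absolute value on its respective support by a constant depending only on the sets. Multiplying those sup-norms by the mass bounds derived above yields a uniform bound on every moment $\inp{x^\alpha t^\beta y^\gamma}{\cdot}$ of each of the four measures. The main (and essentially only) subtlety to flag is the choice of test function $v = t$: it is the cleanest way to squeeze a mass bound for $\mu$ out of Liouville's equation without invoking the dynamics $f$ directly, which would otherwise require carrying around $\norm{f}_{C^0([0,T]\times X)}$. Assumptions A2--A4 are not used for this mass-bounding step; they enter later when one needs convergence of the moment-SOS hierarchy and the relaxation-gap results of Theorems \ref{thm:strong_duality_dist} and \ref{thm:relaxation_gap}.
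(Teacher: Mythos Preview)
Your argument is correct and matches the paper's proof essentially line for line: both plug $v\equiv 1$ into \eqref{eq:dist_meas_liou} for $\mu_p$, use $w\equiv 1$ in \eqref{eq:dist_meas_marg} for $\eta$, and substitute $v(t,x)=t$ to get $\inp{1}{\mu}=\inp{t}{\mu_p}\leq T$. Your final paragraph promoting mass bounds to moment bounds via compactness is actually the content of the paper's subsequent Lemma~\ref{lem:finite_mom}, not Lemma~\ref{lem:bounded} itself, so you have folded two lemmas into one---which is harmless, and your observation that only A1 is genuinely needed here is accurate.
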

\begin{proof}
Constraint \eqref{eq:dist_meas_prob} imposes that $\inp{1}{\mu_0} = 1$, which further requires that $\inp{1}{\mu_p} = \inp{1}{\mu_0} = 1$ by constraint \eqref{eq:dist_meas_liou} ($v(t, x) = 1$) and $\inp{1}{\mu_p} = \inp{1}{\eta} = 1$ ($w(x) = 1$). The occupation measure $\mu$ likewise has bounded mass with $\inp{1}{\mu} = \inp{t}{\mu^p} < T$ by constraint \eqref{eq:dist_meas_liou} ($v(t, x) = t$).
\end{proof}

\rev{
\begin{lem}
The measures$(\mu_0, \mu_p, \mu, \eta)$ all have finite moments under Assumptions A1-A\rev{4}.
\label{lem:finite_mom}
\end{lem}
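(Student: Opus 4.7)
The plan is to combine the mass bounds of Lemma \ref{lem:bounded} with the compactness hypothesis of Assumption A1 to uniformly bound each moment. The strategy is purely measure-theoretic: any moment is an integral of a continuous function against a finite measure supported on a compact set, so compactness will give a uniform $L^\infty$ bound on the integrand while Lemma \ref{lem:bounded} will control the total mass.

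First I would identify the support of each measure: $\mu_0$ is supported on $X_0$, $\mu$ and $\mu_p$ on $[0,T]\times X$, and $\eta$ on $X \times X_u$. By Assumption A1, each of these supporting sets is compact, so for any multi-index $\alpha \in \N^n$ (respectively $(\alpha,\beta)\in \N^{n+1}$ or $(\alpha,\gamma)\in \N^{2n}$), the corresponding monomial attains a finite maximum on the support. Concretely, defining $M_\alpha^0 = \max_{x \in X_0} |x^\alpha|$, $M_{\alpha\beta} = \max_{(t,x) \in [0,T] \times X} |t^\beta x^\alpha|$, and $M_{\alpha\gamma}^\eta = \max_{(x,y) \in X \times X_u} |x^\alpha y^\gamma|$, the Weierstrass extreme value theorem guarantees that all these constants are finite.

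Next, I would bound the moments directly: for the initial moments, $|\inp{x^\alpha}{\mu_0}| \leq M_\alpha^0 \inp{1}{\mu_0} = M_\alpha^0$ using the probability constraint \eqref{eq:dist_meas_prob}. An identical argument with $\inp{1}{\mu_p} = 1$ (from Lemma \ref{lem:bounded}) yields $|\inp{t^\beta x^\alpha}{\mu_p}| \leq M_{\alpha\beta}$, and with $\inp{1}{\eta}=1$ yields $|\inp{x^\alpha y^\gamma}{\eta}| \leq M_{\alpha\gamma}^\eta$. For the occupation measure, Lemma \ref{lem:bounded} gives $\inp{1}{\mu} \leq T$, so $|\inp{t^\beta x^\alpha}{\mu}| \leq T \cdot M_{\alpha\beta}$. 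This establishes finiteness of every moment for all four measures.

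There is no real obstacle here: the result is essentially immediate once one writes out what a moment is. The only things being used are (i) compactness of supports from A1, guaranteeing continuous functions are bounded on those supports, and (ii) finiteness of total mass from Lemma \ref{lem:bounded}, which in turn relied on the full set A1--A4 through the Liouville equation. Thus the statement follows directly under A1--A4.
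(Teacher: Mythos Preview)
Your argument is correct and follows essentially the same approach as the paper: both combine the compact-support hypothesis A1 (so monomials are bounded on the supports) with the mass bounds from Lemma~\ref{lem:bounded} to conclude that every moment is finite. Your version simply spells out the Weierstrass bound and the moment estimates explicitly, whereas the paper states the sufficient condition (compact support plus finite mass) in one sentence and invokes A1 and Lemma~\ref{lem:bounded} directly.
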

\begin{proof}
A sufficient condition for 
 a  measure $\tau \in \Mp{X}$ with compact support   to be bounded is to have finite mass $\inp{1}{\tau}$. In our case, the support of all measures $(\mu_0, \mu_p, \mu, \eta)$ 
 are compact sets \rw{by A1}.
 Further, under Assumptions A1-A4, all of these measures have bounded mass (Lemma \ref{lem:bounded}).
 This sufficiency is satisfied by all measures $(\mu_0, \mu_p, \mu, \eta)$.
\end{proof}
}
\begin{theorem}
\label{thm:lmi_convergence}
When $T$ is finite and $X_0,  X, X_u$ are all \rev{Archimedean}, the sequence of lower bounds $p^*_{d} \leq p^*_{d+1} \leq p^*_{d+2} \ldots$ will approach  $p^*$ as $d$ tends towards $\infty$.
\end{theorem}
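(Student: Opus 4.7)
The plan is to establish three facts: (i) monotonicity $p^*_d \leq p^*_{d+1}$, (ii) the relaxation inequality $p^*_d \leq p^*$, and (iii) convergence $\lim_{d \to \infty} p^*_d = p^*$, from which the theorem follows.

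For (i), any moment tuple feasible at degree $d+1$ truncates to one feasible at degree $d$: the relevant localizing matrices $\M_d(\cdot)$ are principal submatrices of $\M_{d+1}(\cdot)$ (so PSDness is inherited), the linear Liouville and marginal relations indexed by $\N^{n+1}_{\leq 2d}$ are a subset of those indexed by $\N^{n+1}_{\leq 2(d+1)}$, and the objective \eqref{eq:dist_lmi_obj} depends only on degree-$\leq 2d$ moments of $\eta$. For (ii), given any feasible $(\mu_0, \mu_p, \mu, \eta)$ in \eqref{eq:dist_meas}, Lemma \ref{lem:finite_mom} guarantees finite moment sequences that satisfy every constraint \eqref{eq:dist_lmi_marg}--\eqref{eq:dist_lmi_joint}, so the LMI infimum is at most $\inp{c(x,y)}{\eta}$, and taking the infimum over feasible tuples yields $p^*_d \leq p^*$.

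The substantive step is (iii), obtained by a weak-$*$ extraction argument standard for the Lasserre hierarchy. First I would fix an optimal moment tuple $(\rbm^{0,d}, \rbm^{p,d}, \rbm^{*,d}, \rbm^{\eta,d})$ at each $d$. Because $X_0$, $[0, T] \times X$, and $X \times X_u$ are Archimedean, the localizing matrix constraints combined with the bounded-mass condition from Lemma \ref{lem:bounded} yield uniform a priori bounds $|\rbm^{*,d}_\alpha| \leq M_\alpha$ independent of $d$ for $d \geq |\alpha|/2$, with analogous bounds for the other three sequences. A diagonal extraction then produces a subsequence along which every individual moment converges to limits $\rbm^{0,\infty}, \rbm^{p,\infty}, \rbm^{\infty}, \rbm^{\eta,\infty}$, and the PSD and localizing inequalities pass to the limit at every order. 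By Putinar's moment representation theorem applied on each Archimedean support set, the limiting moment sequences uniquely correspond to nonnegative Borel measures $(\mu_0^\infty, \mu_p^\infty, \mu^\infty, \eta^\infty)$ with the prescribed supports.

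Finally I would verify that this limit tuple is feasible for \eqref{eq:dist_meas} and that $\lim_d p^*_d = \inp{c(x,y)}{\eta^\infty}$. Objective convergence is immediate since $c$ is polynomial and involves only finitely many moments. The probability normalization \eqref{eq:dist_meas_prob} passes to the limit directly. The marginal \eqref{eq:dist_meas_marg} and Liouville \eqref{eq:dist_meas_liou} constraints hold against every polynomial test function by construction of \eqref{eq:dist_lmi_marg} and \eqref{eq:dist_lmi_flow}, so a Stone--Weierstrass density argument (combined with the uniform mass bounds) extends them respectively to all $w \in C(X)$ and $v \in C^1([0, T] \times X)$. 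The main obstacle is this last step: bridging from the polynomial test-function constraints of the LMI to the $C^1$ test functions required by \eqref{eq:dist_meas_liou}, which leans on compactness of $[0, T] \times X$, density of polynomials in $C^1$ in the norm $\norm{\cdot}_{C^1([0,T] \times X)}$, and the finite-mass property of all limiting measures. Feasibility of $(\mu_0^\infty, \mu_p^\infty, \mu^\infty, \eta^\infty)$ in \eqref{eq:dist_meas} then gives $\lim_d p^*_d \geq p^*$, which combined with (ii) yields $\lim_d p^*_d = p^*$.
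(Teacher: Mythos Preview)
Your proposal is correct and constitutes a self-contained expansion of the argument the paper delegates to an external reference: the paper's proof simply invokes Corollary~8 of \cite{tacchi2022convergence} together with the bounded-mass Lemma~\ref{lem:bounded}, whereas you spell out the standard Lasserre-hierarchy extraction argument (uniform moment bounds from Archimedeanity and bounded mass, diagonal subsequence, Putinar representation, Stone--Weierstrass passage to $C^1$ test functions) that underlies that cited result. The mathematical content is the same; your version is more explicit and stands on its own, at the cost of reproving a known black-box convergence theorem.
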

\begin{proof}
This convergence is assured \rev{by Corollary 8 of \cite{tacchi2022convergence} under the Archimedean assumption and Lemma \ref{lem:bounded}.}
\end{proof}



\begin{remark}
\rw{Non-polynomial} $C^0$ cost functions $c(x, y)$ may be approximated by polynomials $\tilde{c}(x, y)$ through the Stone-Weierstrass theorem in the compact set $X \times Y$. For every $\epsilon > 0$, there exists a $\tilde{c}(x, y) \in \R[x, y]$ such that $\max_{x\in X, y \in X_u} \abs{c(x, y) - \tilde{c}(x, y)} \leq \epsilon$. Solving the peak estimation problem \eqref{eq:dist_meas} with cost $\tilde{c}(x, y)$ as $\epsilon \rightarrow 0$ will yield convergent bounds to $P^*$ with cost $c(x, y)$. Section \ref{sec:polyhedral} offers an alternative peak estimation problem \rw{using polyhedral lifts} for costs \rw{comprised by the maximum of a set of functions}.
\end{remark}

\subsection{Numerical Considerations}

A moment matrix with $n$ variables in degree $d$ has dimension $\binom{n+d}{d}$. \rev{The sizes of moment matrices associated with a $d$ relaxation of  Problem \eqref{eq:dist_lmi} with state  $x \in \mathbb{R}^n$, dynamics $f(t, x)$, and induced dynamic degree $\tilde{d}$, are listed in Table \ref{tab:mom_size}.}


\begin{table}[h]
\caption{\label{tab:mom_size}Sizes of moment matrices in \ac{LMI} \eqref{eq:dist_lmi}}
        \centering
        \begin{tabular}{c c c c c}
             Moment&  $\M_d(\rbm^0)$ & $\M_d(\rbm^p)$ & $\M_{\tilde{d}}(\rbm)$ & $\M_d(\rbm^\eta)$ \\
             \\
             Size & $\binom{n+d}{d}$ & $\binom{1+n+d}{d}$ & $\binom{1 + n+\tilde{d}}{\tilde{d}}$ & $\binom{2n+d}{d}$         \end{tabular}
    \end{table}

The computational complexity \rev{of} solving the \rw{SDP formulation of } \ac{LMI} \eqref{eq:dist_lmi} scales polynomially as the largest matrix size in Table \ref{tab:mom_size}, usually $\M_d(\rbm^\eta)$, except in cases where $f(t, x)$ has a high polynomial degree. 

\begin{remark}
\label{rmk:combine_mup_eta}
The measures $\mu_p$ and $\eta$ may in principle be combined \rev{into} a larger measure $\tilde{\eta} \in \Mp{[0, T] \times X \times X_u}$. The Liouville equation \eqref{eq:dist_meas_liou} would then read $\pi^{tx}_\# \tilde{\eta} = \delta_{0}\otimes \mu_0 + \Lie_f^\dagger \mu $, and a valid selection of $\tilde{\eta}$ given an optimal trajectory is $\tilde{\eta} = \delta_{t=t_p^*}\otimes\delta_{x=x_p^*}\otimes\delta_{y=y^*}$ \rw{with $x_p^*=x(t_p^* \mid x_0^*)$}. The measure $\tilde{\eta}$ is defined over $2n+1$ variables, and the size of its moment matrix at a degree $d$ relaxation is $\binom{1+2n+d}{d}$, as compared to $\binom{2n+d}{d}$ for $\eta$. We elected to split up the measures as $\mu_p$ and $\eta$ to reduce the number of variables in the largest measure, and to ensure that the objective \eqref{eq:dist_meas_obj} is interpretable as an earth-mover distance \rev{(from optimal transport literature\cite{villani2008optimal})} between $\pi^x_\# \mu_p$ and a probability distribution over $X_u$ (absorbed into $\pi^x_\# \eta$).
\end{remark}


\begin{remark}
The distance problem \eqref{eq:dist_traj} may also be treated as a peak estimation problem \eqref{eq:peak_traj_std} with cost $p(x,y) = \rev{-}c(x, y)$, initial set $X_0 \times X_u$, \rev{$x$-dynamics} $\dot{x}(t) = f(t, x(t))$, \rev{and $y$-dynamics} $\dot{y}(t) = 0$.
The moment matrix $\M_d[\rbm]$ associated with this peak estimation problem's occupation measure (\ac{LMI} relaxation of program \eqref{eq:peak_meas}) would have size $\binom{1+2n+\tilde{d}}{d}$. 
\rev{T}his size is greater than any of the sizes written in Table \ref{tab:mom_size}.
\end{remark}

\begin{remark}
The atom-extraction-based recovery Algorithm 1 from \cite{miller2020recovery} may be used to approximate near-optimal trajectories if the moment matrices $\M_d(\rbm^0)$, $\M_d(\rbm^p)$, and $\M_d(\rbm^\eta)$ are each low rank. If these matrices are all rank-one, then the near-optimal points $(x_p, y, x_0,  t_p)$ may be read directly from the moment sequences $(\rbm^0, \rbm^p, \rbm^\eta)$. The near optimal points from Figure \ref{fig:flow_recovery} were recovered at the degree-4 relaxation of \eqref{eq:dist_lmi}. The top corner of the moment matrices $\M_d(\rbm^0)$, $\M_d(\rbm^p)$, and $\M_d(\rbm^\eta)$ (containing moments of orders 0-2) have second-largest eigenvalues of $1.87 \times 10^{-5}$, $8.82 \times 10^{-6}$, $5.87 \times 10^{-7}$ respectively, as compared to the largest eigenvalues of $3.377$, $1.472$, $1.380$. 
\end{remark}

\rev{
\subsection{SOS Approximation}

The degree-$d$ \ac{WSOS} truncation of program \eqref{eq:dist_cont} is,
\begin{subequations}
\label{eq:dist_sos}
\begin{align}
    d^*_d = \ &\max_{\gamma \in \R} \quad \gamma\\
    & v(0, x) - \gamma \in \Sigma[X_0]_{\leq 2d}\label{eq:dist_sos_x0}\\
    & c(x, y) - w(x) \in \Sigma[X \times X_u]_{\leq 2d}\label{eq:dist_sos_cw}\\
    & w(x) - v(t, x)\in \Sigma[[0, T] \times X]_{\leq 2d} \label{eq:dist_sos_wv}  \\
    & \Lie_f v(t, x)  \in \Sigma[[0, T] \times X]_{\leq 2\tilde{d}} \label{eq:dist_sos_f}\\
    & w \in \R[x]_{\leq 2d} \label{eq:dist_sos_w}\\
    &v \in \R[t,x]_{\leq 2d}. \label{eq:dist_sos_poly}
\end{align}
\end{subequations}

\begin{theorem}
Strong duality holds with $p^*_k = d^*_k$ for all $k \in \N$ between \eqref{eq:dist_lmi} and \eqref{eq:dist_sos} under assumptions A1-A5.
\end{theorem}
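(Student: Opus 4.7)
My plan is to establish the equality $p^*_k = d^*_k$ in two steps: weak duality followed by a Slater-based strong duality argument on the dual (SOS) side. Weak duality is essentially automatic, since the Lagrangian derivation that takes \eqref{eq:dist_meas} to \eqref{eq:dist_cont} by grouping multipliers restricts verbatim to the finite-dimensional truncations. Moments of order $\leq 2k$ pair with polynomials in $\R[x]_{\leq 2k}$ exactly, so for any feasible dual tuple $(\gamma, v, w)$ and any feasible primal moment sequence, the pairing produces a nonnegative slack in the objective, yielding $\gamma \leq \sum_{\alpha,\beta} c_{\alpha\beta}\rbm^\eta_{\alpha\beta}$ and thus $d^*_k \leq p^*_k$.

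For strong duality, I would invoke the standard conic SDP fact that if one side of a primal--dual pair admits a strictly feasible (Slater) point and the other is bounded, then the optimum is attained on the bounded side and the values coincide. Primal feasibility of \eqref{eq:dist_lmi} follows by truncating the moments of the measures constructed in Theorem \ref{thm:meas_lower}, and primal boundedness from Lemma \ref{lem:finite_mom}. For a dual Slater point I propose low-degree functions: pick constants $K, M > 0$ and set
\begin{equation*}
w_0(x) \equiv -K/2,\qquad v_0(t,x) = -K - M(T-t),\qquad \gamma_0 = -2(K + MT).
\end{equation*}
A direct substitution, using $c(x,y) \geq 0$, yields strict positivity of each left-hand side of \eqref{eq:dist_sos_x0}--\eqref{eq:dist_sos_f}: one has $v_0(0,x) - \gamma_0 = K + MT$, $c(x,y) - w_0(x) \geq K/2$, $w_0(x) - v_0(t,x) \geq K/2$, and $\Lie_f v_0(t,x) = M$. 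Under the Archimedean assumption (which I take to be the content of A5), Putinar's Positivstellensatz then delivers \ac{WSOS} certificates of these four strictly positive polynomials at some finite degree $d^\star$, so $(\gamma_0, v_0, w_0)$ is dual-feasible for every $k \geq d^\star$.

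The main obstacle I foresee is bridging strict positivity of a polynomial on an Archimedean set, which is what Putinar's theorem yields, to strict feasibility of the Gram-matrix LMIs underlying \eqref{eq:dist_sos}, which is what Slater's condition actually requires in SDP form. I would close this gap by the standard perturbation: after obtaining each \ac{WSOS} certificate, replace each multiplier $\sigma_j$ by $\sigma_j + \varepsilon \sum_{|\alpha| \leq k} x^{2\alpha}$ with small $\varepsilon > 0$, so that the Gram matrices sit in the interior of the \ac{PSD} cone while the overall polynomial certificate remains valid thanks to the slack in each strict inequality above. For the small values of $k$ where the degree $d^\star$ is not yet reached, the claim is handled separately: either both programs take the same trivial value (objective is unbounded below or the feasible set is empty in a matching way on both sides) or a larger constant $K$ can be chosen to fit within the allowed degrees. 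Combining the dual Slater point with primal boundedness and feasibility, conic duality then closes the argument and yields $p^*_k = d^*_k$ with attainment on the primal side.
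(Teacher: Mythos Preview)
Your overall plan is different from the paper's, which simply cites Corollary~8 of Tacchi and Theorem~4/Lemma~4 of Henrion's appendix. Those references do \emph{not} go through a dual (SOS-side) Slater point; instead they use the Trnovska-type fact that a bounded, nonempty primal SDP feasible set already forces zero duality gap and primal attainment. In the paper's setting this reduces to: (i) the moment LMI \eqref{eq:dist_lmi} is feasible (truncate the trajectory-measures from Theorem~\ref{thm:meas_lower}), and (ii) its feasible set is bounded (Lemma~\ref{lem:bounded} plus the Archimedean assumption bound all moments). You already invoke both of these facts, so the shortest correct fix to your write-up is to drop the SOS Slater construction entirely and appeal directly to the boundedness route.

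The specific gap in your argument is the perturbation step for constraint \eqref{eq:dist_sos_cw}. That constraint is an \emph{equality} in polynomial coefficients, $c(x,y)-w(x)=\sigma_0(x,y)+\sum_i\sigma_i(x,y)g_i(x,y)$, with the only free object on the left being $w(x)$. Your perturbation $\sigma_j\mapsto\sigma_j+\varepsilon\sum_{|\alpha|\le k}(x,y)^{2\alpha}$ adds to the right-hand side a polynomial that genuinely depends on $y$; there is no way to absorb it into $w(x)$, and you cannot touch $c(x,y)$ since it is data. So the certificate ceases to hold, and you do not obtain a strictly feasible point. The ``slack'' you appeal to is slack in the nonnegativity inequality $c-w>0$ on $X\times X_u$, not slack in the SDP equality constraints, and it is the latter you need. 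Establishing SOS-side strict feasibility in problems with marginal-type couplings like \eqref{eq:dist_meas_marg} is known to be delicate for exactly this reason, which is why the moment-side boundedness argument is the standard one in the literature.

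Two smaller points: (i) A3 only asserts $c\in C^0$, not $c\ge 0$; you should take $K$ large relative to $-\inf_{X\times X_u} c$ rather than assume nonnegativity. (ii) Your handling of small degrees $k<d^\star$ is hand-wavy; the boundedness route removes this issue as well, since it applies uniformly in $k$.
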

\begin{proof}
Refer to \rev{Corollary 8 of \cite{tacchi2022convergence} (Archimedean condition and bounded masses), as well as} to the proof of Theorem 4 and Lemma 4 in Appendix D of \cite{henrion2013convex}. 
\end{proof}
}
\section{Exploiting Correlative Sparsity}
\label{sec:sparsity}

Many costs $c(x, y)$ exhibit a\rev{n additively} separable structure, such that $c$ can be decomposed into the sum of new terms $c(x, y) = \sum_i c_i(x_i, y_i)$. Each term $c_i$ in the sum is a function purely of $(x_i, y_i)$. Examples include the $L_p$ family of distance functions, such as the squared $L_2$ cost $c(x, y) = \sum_i (x_i - y_i)^2$. The theory of Correlative Sparsity in polynomial optimization, briefly reviewed below, can be used to substantially reduce the computational complexity entailed in solving the distance estimation \rw{\acp{SDP}} when $c$ is \rev{additively} separable \cite{waki2006sums}. \rev{This decomposition does not require prior structure on the set $X \times X_u$.} Other types of reducible structure (if applicable) include Term Sparsity \cite{wang2021tssos}, symmetry \cite{riener2013exploiting}, and network dynamics \cite{schlosser2020sparse}. These forms of structure may be combined if present, such as in Correlative and Term Sparsity \cite{wang2021chordal}.

\subsection{Correlative Sparsity Background}
Let $\K = \{x \mid g_k(x) \geq 0, \ k = 1, \ldots, N\}$ be an Archimedean basic semialgebraic set and $\phi(x)$ be a polynomial.
The \rev{\ac{CSP}} associated to $(\phi(x), g)$ is a graph $\gs(\vs, \es)$ with vertices $\vs$ and edges $\es$. 
Each of the $n$ vertices in $\vs$ corresponds to a variable $x_1, \ldots, x_n$. 
An edge $(x_i, x_j) \in \es$ appears if variables $x_i$ and $x_j$ are multiplied together in a monomial in $\phi(x)$, or if they appear together in at least one constraint $g_k(x)$ \cite{waki2006sums}.

The correlative sparsity pattern of $(\phi(x), g)$ may be characterized by sets $I$ of variables and sets $J$ of constraints. The $p$ sets $I$ should satisfy the following two properties:

\begin{enumerate}
    \item (Coverage)  $\bigcup_{j=1}^p I_j = \vs$
    \item (Running Intersection Property) For all $k = 1, \ldots, p-1$:  $I_{k+1} \cap \bigcup_{j=1}^k I_j\subseteq I_s$ for some $s \leq k$ 
\end{enumerate}

Equivalently, the sets $I$ are the maximal cliques of a chordal extension of $\gs(\vs, \es)$ \cite{vandenberghe2015chordal}.
The sets $J = \{J_i\}_{i = 1}^{p}$ are a partition over constraints $g_k(x) \geq 0$. The number $k$ is in $J_i$ for $k = 1, \ldots N_X$ if all variables involved the constraint polynomial $g_k(x)$ are contained within the set $I_i$. Let the notation $x(I_i)$ denote the variables in $x$ that are members of the set $I_i$.
A \rev{sufficient} sparse representation of  positivity certificates may be developed for $(\phi(x), g)$ satisfying an admissible correlative sparsity pattern $(I, J)$  \cite{lasserre2006convergent}:
    \begin{align}
        & \phi(x) =\textstyle \sum_{i=1}^p\sigma_{i0}(x(I_i)) + \sum_{k\rev{\in}J_i} {\sigma_k(x(I_i))g_k(x)} \label{eq:putinar_sparse}\\
        & \sigma_{i0}(x) \in \Sigma[x(I_i)] \qquad \sigma_k(x) \in \Sigma[x(I_i)] \qquad \forall i = 1, \ldots, p. \nonumber
    \end{align}

Equation \eqref{eq:putinar_sparse} is a sparse version of the Putinar certificate in \eqref{eq:putinar}. \rev{The sparse \rw{certificate} \eqref{eq:putinar_sparse} is additionally necessary \rw{for the $\gs$-sparse polynomial $\phi(x)$ to be positive over $\mathbb{K}$}  if $(I, J)$ satisf\rw{ies} the Running Intersection Property and a sparse Archimedean property holds: that there exist finite constants $R_i > 0$ for $i=1..n$ such that $R_i^2 - \norm{x(I_i)}^2_2$ is in the quadratic module \eqref{eq:quad_module} of constraints $Q[\{g_k\}_{k \in J_i}]$ \cite{lasserre2006convergent}.}



\subsection{Correlative Sparsity for Distance Estimation}
\label{sec:unsafe_csp} 

Constraint \eqref{eq:dist_cont_cw} will exhibit correlative sparsity when $c(x, y)$ is \rev{additively} separable,
\begin{align}
\label{eq:csp_poly}
    \textstyle \sum_{i=1}^n c_i(x_i, y_i) - w(x) \geq 0 & & \forall (x, y) \in X \times X_u.
\end{align}

The product-structure support set of Equation \eqref{eq:csp_poly} may be expressed as,
\begin{align}
\label{eq:csp_support}
    X \times X_u = \{(x, y) \mid &g_1(x)\geq 0, \ldots g_{N_X}(x) \geq 0, \\
    &g_{N_X+1}(y) \geq 0, \ldots g_{N_X+N_U}(y) \geq 0\}.\nonumber 
\end{align}
    

\rev{The correlative sparsity graph of \eqref{eq:csp_poly} is the graph Cartesian product of the complete graph $K_n$ by $K_2$, and is visualized at $n=4$ by \rw{the nodes and black lines in}  Figure \ref{fig:CSP_extend}.}     
Black lines imply that there is a link between variables. The  black lines are drawn between each pair $(x_i, y_i)$ from the cost terms $c_i$. 
The polynomial $w(x)$ involves mixed monomials of all variables $(x)=(x_1,x_2,x_3,x_4)$. 
Prior knowledge on the constraints of $X_u$ are not assumed in advance, so the variables are $(y)=(y_1,y_2,y_3,y_4)$ joined together. 
\Iac{CSP} $(I, J)$ associated with this system is,
\begin{align*}
    I_1 &= \{x_1,x_2,x_3,x_4, y_1\} & J_1 &= \{1, \ldots, N_X\}\\
    I_2 &= \{x_2,x_3,x_4, y_1, y_2\} & J_2 &= \varnothing \\
    I_3 &= \{x_3,x_4, y_1, y_2, y_3\} & J_3 &= \varnothing \\
    I_4 &= \{x_4, y_1, y_2, y_3, y_4\} & J_4 &= \{N_X+1, \ldots, N_X+N_U\}. \\
\end{align*}
Figure \ref{fig:CSP_extend} illustrates a chordal extension of the \ac{CSP} graph with new edges displayed as red dashed lines. These new edges appear by connecting all variables in $I_1$ together in a clique, and by following a similar process for $I_2, \ldots I_4$.






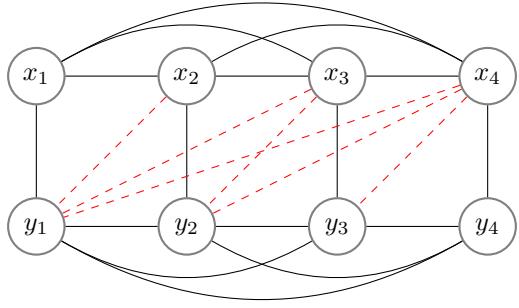
\begin{figure}[!htbp]
\centering
\begin{tikzpicture}[every node/.style={circle, draw=black!50, thick, minimum size=7.5mm}]
\node (n1) at (-3,4) {$x_1$};
\node (n2) at (-1,4) {$x_2$};
\node (n3) at (1,4) {$x_3$};
\node (n4) at (3,4) {$x_4$};
\node (n5) at (-3,2) {$y_1$};
\node (n6) at (-1, 2) {$y_2$};
\node (n7) at (1, 2) {$y_3$};
\node (n8) at (3,2) {$y_4$};

\draw (n1)--(n2);
\draw (n2)--(n3);
\draw (n3)--(n4);
\draw (n1) to [bend left] (n4);
\draw (n1) to [bend left] (n3);
\draw (n2) to [bend left] (n4);

\draw (n5)--(n6);
\draw (n6)--(n7);
\draw (n7)--(n8);
\draw (n5) to [bend right] (n8);
\draw (n6) to [bend right] (n8);
\draw (n5) to [bend right] (n7);

\draw (n1)--(n5);
\draw (n2)--(n6);
\draw (n3)--(n7);
\draw (n4)--(n8);

\draw[dashed, color=red] (n2)--(n5);
\draw[dashed, color=red] (n3)--(n5);
\draw[dashed, color=red] (n4)--(n5);
\draw[dashed, color=red] (n3)--(n6);
\draw[dashed, color=red] (n4)--(n6);
\draw[dashed, color=red] (n4)--(n7);

\end{tikzpicture}
\caption{\label{fig:CSP_extend} \ac{CSP} with 4-States and Chordal Extension}
\end{figure}

For a unsafe distance bounding problem with a \rev{additively} separable $c(x,y) = \sum_{i} c(x_i, y_i)$ with $n$ states, the correlative sparsity pattern $(I, J)$ is,
\begin{align}
    I_1 &= \{x_1,\ldots,x_n, y_1\} & J_1 &= \{1, \ldots, N_X\} \label{eq:csp_dist}\\
    I_i &= \{x_i,\ldots x_n, y_
    1, \ldots y_i\} & J_i &= \varnothing,   \qquad \forall i = 2, \ldots, n-1 \nonumber\\
    I_n &= \{x_n, y_1, \ldots, y_n\} & J_n &= \{N_X+1, \ldots, N_X+N_U\}.  \nonumber
\end{align}

A total of $(n-1)n/2$ new edges are added in the chordal extension.
\rev{Letting $y_{1:i}$ be the collection of variables $(y_1, y_2, \ldots, y_i)$ for an index $i \in 1..n$ (and with a similar definition for $x_{i:n}$), a} correlatively sparse certificate of positivity for constraint \eqref{eq:dist_cont_cw} is,
    \begin{align}
    \label{eq:csp_putinar}
        \sum_{i=1}^n c_i(x_i, y_i) - w(x)  &= \sum_{i=1}^n\sigma_{i0}(x_{i:n}, y_{1:i}) + \sum_{k=1}^{N_X} {\sigma_k(x, y_1)g_k(x)} \nonumber \\
        &+ \sum_{k=N_X + 1}^{N_X+N_U} {\sigma_k(x_n, y)g_k(y)},
        \end{align}
with sum-of-squares multipliers,
        
        \begin{align}
        & \sigma_{i0}(x, y) \in \Sigma[x_{i:n}, y_{1:i}] & &\nonumber 
        \forall i = 1, \ldots, p \\
        & \sigma_k(x, y) \in \Sigma[x, y_1] & &
         \forall k = 1, \ldots, N_X \\
        & \sigma_k(x, y) \in \Sigma[x_n, y] & &
         \forall k = N_X +1, \ldots, N_X+N_U.\nonumber 
    \end{align}

\rev{The application of correlative sparsity to the distance problem replaces constraint  \eqref{eq:dist_sos_cw} by \eqref{eq:csp_putinar}.}

\rev{
\begin{remark}
\label{rmk:csp_nonunique}
The CSP decomposition in 
\eqref{eq:csp_dist} is nonunique. As an example, the following decompositions are all valid for $n=3$ (satisfy Running Intersection Property),
\begin{align*}
    I_1 &= \{x_1,x_2,x_3,y_1\} & I_1' &= \{x_1,x_2,x_3,y_3\} \\
    I_2 &= \{x_2,x_3,y_1, y_3\} & I_2' &= \{x_1,x_2,y_2,y_3\} \\
    I_3 &= \{x_2,y_1, y_2, y_3\} & I_3' &= \{x_1,y_1,y_2,y_3\}
\end{align*}
\end{remark}
}

The original constraint $\eqref{eq:dist_cont_cw}$ is dual to the joint measure $\eta \in \Mp{X \times Y}$. Correlative sparsity may be applied to the measure program by splitting $\eta$ into new measures $\eta_1 \in \Mp{X \times \R}, \eta_n \in \Mp{\R \times X_u}$ and $\eta_i \in \Mp{\R^{n+1}}$ for $i = 2, \ldots, n-1$ following the procedure in \cite{lasserre2006convergent}. These measures will align on overlaps with $\pi^{I_i \cap I_{i+1}}_\# \eta_{i} = \pi^{I_i \cap I_{i+1}}_\# \eta_{i+1}, \ \forall i = 1, \ldots, n-1$. At a degree $d$ relaxation, the moment matrix of $\eta$ in \eqref{eq:dist_lmi} has size $\binom{2n + d}{d}$. Each of the $n$ moment matrices of $\{\eta_i\}^n_{i=1}$ has a size of $\binom{n+1+d}{d}$. For example, a problem with  $n=4, d=4$ will have a moment matrix for $\eta$ of size $\binom{12}{4} = 495$, while the moment matrices for each of the $\eta_{(1:4)}$ are of size $\binom{9}{4} = 126$. 
\section{Shape Safety}
\label{sec:shape}

 The distance estimation problem may be extended to sets or shapes travelling along trajectories, bounding the minimum distance between points on the shape and the unsafe set. 
An example application is in quantifying safety of rigid body dynamics, \rw{such as} finding the closest distance between all points on an airplane and points on a mountain. 

\subsection{Shape Safety Background}
 

Let $X \subset \R^n$ be a region of space with unsafe set $X_u$, and $c(x, y)$ be a distance function. The state $\omega \in \Omega$ (such as position and angular orientation) follows dynamics $\dot{\omega}(t) = f(t, \omega)$ between times $t \in [0, T]$. A trajectory is $\omega(t \mid \omega_0)$ for some initial state $\omega_0 \in \Omega_0 \rw{\subset \Omega}$. The shape of the object is a set $S$. There exists a mapping $A(s; \omega): S \times \Omega \rightarrow X$ that provides the transformation between local coordinates on the shape $(s)$ to global coordinates in $X$. 

Examples of a shape traveling along trajectories are detailed in Figure \ref{fig:shape_travel}. The shape $S = [-0.1, 0.1]^2$ is the pink square. The left hand plot is a pure translation \rev{after} a $5\pi/12$ radian rotation, and the right plot involves a rigid body transformation. 

\begin{figure}[ht]
    \centering
    \includegraphics[width=\linewidth]{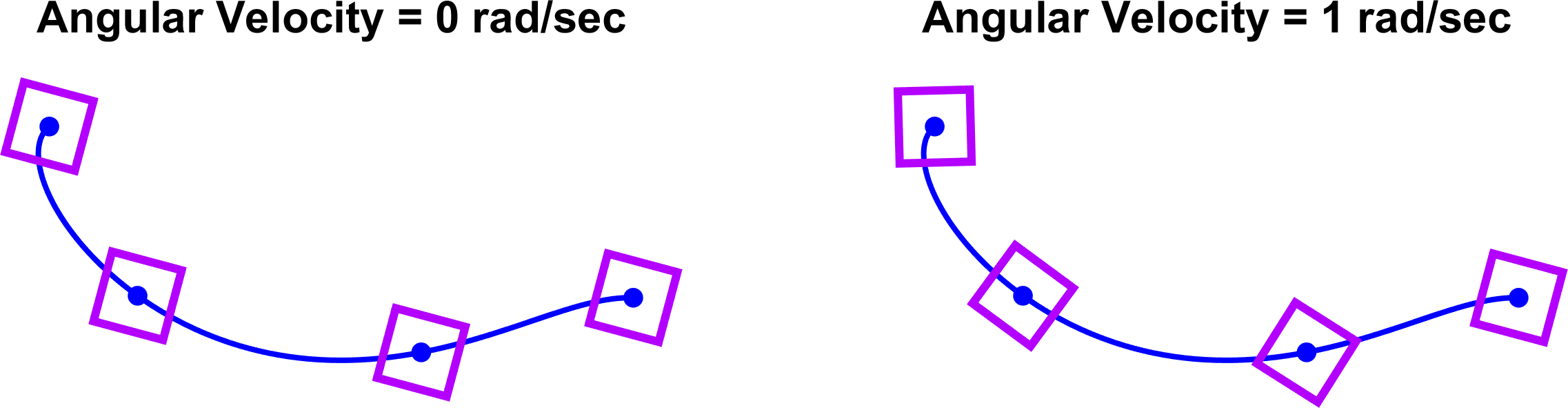}
    \caption{\label{fig:shape_travel}Shape moving and rotating along Flow \eqref{eq:flow} trajectories}
    
\end{figure}

The distance estimation task with shapes is to bound,
\begin{equation}
    \label{eq:shape_dist}
    \begin{aligned}
    P^* = & \rev{\inf}_{t,\:\omega_0 \in \Omega_0, \: s \in S, \: y \in X_u} c(A(s; \: \omega(t \mid \omega_0)), \: y) \\
    & \dot{\omega}(t) = f(t, \omega), \quad \forall t \in [0, T].
    \end{aligned}
\end{equation}

For each trajectory in state $\omega(t \mid \omega_0)$, problem \eqref{eq:shape_dist} ranges over all points in the shape $s \in S$ and points in the unsafe set $y \in X_u$ to find the closest approach.  An optimal trajectory of the shape distance program may be expressed as $\ts^*_s = (y^*, s^*, \omega_0^*, t^*_p)$ \rw{with $\omega^*_p = \omega(t_p^* \mid \omega_0^*),$ $x^*_p = A(s^*; \omega^*_p)$ and }
\[P^* = \rev{c(A(s^*; \omega^*_p), X_u) }= c(A(s^*; \omega(t_p^*\mid \omega^*_0)), y^*).\]

 \subsection{Assumptions}
The following assumptions are made in the Shape Distance program \eqref{eq:shape_dist}:
\begin{enumerate}
\item[A1'] The sets $[0, T], \  \Omega, \ S, \  X, \ X_u$ are compact \rw{and $\Omega_0 \subset \Omega$}.
    \item[A2'] The function $f(t, \omega)$ is Lipschitz in each argument.
    \item[A3'] The cost $c(x, y)$ is $C^0$.
     \item[\rw{A4'}] The coordinate transformation function $A(s; \omega)$ is $C^0$.
    \item[A5'] \rev{If $\omega(t \mid \omega_0) \rw{\in \partial}\Omega$ for some $t \in [0, T], \ \omega_0 \in \Omega_0$, then $\omega(t \mid \omega_0)\not\in \Omega \ \forall t' \in \rw{(}t, T].$}
    \item[A6'] \rev{If $\exists s \in S$ such that $A(s; \omega(t \mid \omega_0)) \not\in  X$ \rw{or $A(s; \omega(t \mid \omega_0)) \in \partial X$} for some $t \in [0, T], \ \omega_0 \in \Omega_0$, then $A(s; \omega(t' \mid \omega_0)) \not\in X \ \forall t' \in (t, T].$}
\end{enumerate}
\rw{An alternative assumption used instead of A5'-A6' is that $\omega(t\mid \Omega_0)$ stays in $\Omega$ for all $\omega_0 \in \Omega_0$ and $A(s; \omega(t \mid \omega_0)) \in X$ for all $s \in S, t \in [0, T]$.}

\subsection{Shape Distance Measure Program}

\rev{Program \eqref{eq:shape_dist} involves a distance objective $c(x, y)$, where the point $x = A(s; \omega)$ is given by a coordinate transformation between body coordinates $s$ and the evolving orientation $\omega$. In order to formulate a measure program to \eqref{eq:shape_dist}, a shape measure $\mu_s \in \Mp{S \times \Omega}$ may be added to bridge the gap between the changing orientation $\dot{\omega}$ and the comparison distance $x$. The shape measure contains information on the orientation $\omega$ and body coordinate $s$ that yields the closest point $x$,

\begin{subequations}
\begin{align}
\inp{z(\omega)}{\mu_p(t, \omega)} &= \inp{z(\omega)}{\mu_s(s, \omega)} & & \forall z \in C(\Omega) \label{eq:shape_demo_meas_marg_shape}\\
    \inp{w(x)}{\eta(x,y)} &= \inp{w(A(s;\omega))}{\mu_s(s, \omega)}  & & \forall w \in C(X). \label{eq:shape_demo_meas_marg_coord}
\end{align}
\end{subequations}

The shape measure $\mu_s$ chooses the worst-case  body coordinate $s$ and orientation $\omega$ from $\mu_p$ \eqref{eq:shape_demo_meas_marg_shape}, such that the point $x = A(s; \omega)$ comes as close as possible to the unsafe set's coordinate  $y$ \eqref{eq:shape_demo_meas_marg_coord}. We retain the coordinate $x$ in order to decrease the computational complexity of the \rw{\acp{SDP}}, as elaborated upon further in Remark \ref{rmk:combine_mup_eta}.
}

\rev{The} infinite dimensional measure program \rev{that lower bounds \eqref{eq:shape_dist}} is,
\begin{subequations}
\label{eq:shape_meas}
    \begin{align}
    p^* &= \ \rev{\inf} \quad \inp{c(x, y)}{\eta} \label{eq:shape_meas_cost}\\
    &\mu_p = \delta_0 \otimes\mu_0 + \Lie_f^\dagger \mu \label{eq:shape_meas_liou}  \\
    &\pi^{\omega}_\# \mu_p = \pi^{\omega}_\# \mu_s \label{eq:shape_meas_marg_shape}\\
    &\pi^{x}_\# \eta = A(s;\omega)_\# \mu_s \label{eq:shape_meas_marg_coord}\\
    &\inp{1}{\mu_0} = 1 \label{eq:shape_meas_marg_prob}\\
    &  \mu_0 \in \Mp{\Omega_0}, \ \eta \in \Mp{X \times X_u} \\
    & \mu_s \in \Mp{\Omega \times S}  \\
    & \mu_p, \ \mu \in \Mp{[0, T] \times \Omega}.
\end{align}
\end{subequations}
Constraint \eqref{eq:dist_meas_marg} in the original distance formulation is now split between \eqref{eq:shape_meas_marg_shape} and \eqref{eq:shape_meas_marg_coord} \rev{(which are equivalent to \eqref{eq:shape_demo_meas_marg_coord} and \eqref{eq:shape_demo_meas_marg_shape})}. 
Problem \eqref{eq:shape_meas} inherits all convergence and duality properties of the original \eqref{eq:dist_meas} under the appropriately modified set of assumptions A1'-A6'.

\rev{
\begin{theorem}
\label{thm:shape_meas_lower}
\rw{Under A3'-A4' (and additionally A5'-A6' when all sets in A1' are compact possibly excluding $[0, T]$), }
the Shape programs \eqref{eq:shape_dist} and \eqref{eq:shape_meas} are related by $p^* \leq P^*$.
\end{theorem}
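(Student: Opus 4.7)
The plan is to mirror the proof of Theorem~\ref{thm:meas_lower} by showing that every admissible trajectory tuple $\ts^*_s = (y, s, \omega_0, t_p)$ for Program \eqref{eq:shape_dist} gives rise to a feasible point of the measure LP \eqref{eq:shape_meas} whose objective value equals the distance $c(A(s;\omega_p),y)$ achieved along that trajectory, where $\omega_p = \omega(t_p\mid\omega_0)$ and $x_p = A(s;\omega_p)$. Taking the infimum over all such tuples then yields $p^* \leq P^*$.

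Concretely, given $(y,s,\omega_0,t_p)$ with $x_p = A(s;\omega_p)$, I would construct the rank-one atomic measures
\begin{align*}
\mu_0 &= \delta_{\omega=\omega_0}, & \mu_p &= \delta_{t=t_p}\otimes\delta_{\omega=\omega_p}, \\
\mu_s &= \delta_{\omega=\omega_p}\otimes\delta_{s=s}, & \eta &= \delta_{x=x_p}\otimes\delta_{y=y},
\end{align*}
and let $\mu$ be the occupation measure of the curve $t\mapsto(t,\omega(t\mid\omega_0))$ on $[0,t_p]$, which is well defined and supported in $[0,T]\times\Omega$ by A1'--A2' (together with A5' when needed to keep the trajectory inside $\Omega$).

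Next I would verify each constraint of \eqref{eq:shape_meas}. Liouville's equation \eqref{eq:shape_meas_liou} follows by the standard argument used in Section \ref{sec:peak}: integrating $\Lie_f v$ along the trajectory gives $v(t_p,\omega_p)-v(0,\omega_0)$ for every test function $v\in C^1([0,T]\times\Omega)$, which is precisely the duality pairing of $v$ with $\mu_p - \delta_0\otimes\mu_0$. The probability normalization \eqref{eq:shape_meas_marg_prob} is immediate. The $\omega$-marginal constraint \eqref{eq:shape_meas_marg_shape} follows because $\pi^\omega_\#\mu_p = \delta_{\omega=\omega_p} = \pi^\omega_\# \mu_s$. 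The coordinate-pushforward constraint \eqref{eq:shape_meas_marg_coord} is the main new point: here I would use continuity of $A(\cdot;\cdot)$ from A4' so that $A(s;\omega)_\#\mu_s$ is well defined, and compute directly that for every $w\in C(X)$,
\begin{equation*}
\inp{w(A(s;\omega))}{\mu_s} = w(A(s;\omega_p)) = w(x_p) = \inp{w(x)}{\eta},
\end{equation*}
which is exactly $A(s;\omega)_\#\mu_s = \pi^x_\#\eta$.

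Evaluating the objective \eqref{eq:shape_meas_cost} on this feasible point gives $\inp{c(x,y)}{\eta} = c(x_p,y)$, which equals the distance attained by the chosen tuple. Since the feasible set of \eqref{eq:shape_meas} contains the measure tuple built from every admissible trajectory, taking the infimum over trajectories yields $p^* \leq P^*$. The main obstacle is purely bookkeeping around the new pushforward through $A$: one must check that $A(s;\omega)_\#\mu_s$ is well defined as a Borel measure on $X$ (guaranteed by A4' together with the compactness of $S\times\Omega$ from A1'), and that the Dirac pushforward identity $A(\cdot;\cdot)_\#(\delta_{\omega_p}\otimes\delta_s) = \delta_{A(s;\omega_p)}$ holds; the remaining constraints reduce to the same verifications that appeared in Theorem \ref{thm:meas_lower}.
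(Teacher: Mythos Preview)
Your proposal is correct and follows essentially the same approach as the paper: construct the Dirac measures $\mu_0=\delta_{\omega_0}$, $\mu_p=\delta_{t_p}\otimes\delta_{\omega_p}$, $\mu_s=\delta_s\otimes\delta_{\omega_p}$, $\eta=\delta_{x_p}\otimes\delta_y$ together with the occupation measure $\mu$ along $t\mapsto(t,\omega(t\mid\omega_0))$, and observe that the feasible set of \eqref{eq:shape_meas} contains all such trajectory-induced measures, giving $p^*\leq P^*$. Your version is in fact more explicit than the paper's, which omits the detailed verification of the pushforward constraint \eqref{eq:shape_meas_marg_coord} that you spell out.
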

\begin{proof}
This proof will follow the same pattern as Theorem \ref{thm:meas_lower}'s proof. A set of measures that are feasible solutions for the constraints of \eqref{eq:shape_meas} may be constructed for any trajectory $\ts_s = (y, s,  \omega_0, t_p)$ \rw{with $\omega_p = \omega(t_p \mid \omega_0), \ x_p = A(s; \omega_p)$}. One choice of these measures are $\mu_0 = \delta_{\omega =\omega_0}, \ \mu_p = \delta_{t=t_p} \otimes \delta_{\omega =\omega_p}, \ \eta = \delta_{x=x_p} \otimes \delta_{y =y}, \ \mu_s = \delta_{s=s} \otimes \delta_{\omega=\omega_p}$ and  $\mu$ \rev{as the \rw{occupation measure $t \mapsto (t, \omega(t \mid \omega_0^*)$ in times $[0, t^*_p]$.}} 
\rw{The feasible set of the constraints contains all trajectory-constructed measures, so  $p^* \leq P^*$.}
\end{proof}

\begin{lem}
\label{lem:shape_mass}
All measures in \eqref{eq:shape_meas} have bounded mass under Assumption A1'.
\end{lem}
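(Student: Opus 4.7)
The plan is to mirror the proof of Lemma \ref{lem:bounded} by sequentially propagating mass bounds through the chain of constraints of \eqref{eq:shape_meas}, starting from the probability normalization on $\mu_0$. Each bound is obtained by testing the relevant constraint against a trivial test function ($1$ or $t$).

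First I would use constraint \eqref{eq:shape_meas_marg_prob} directly to fix $\inp{1}{\mu_0} = 1$. Next, testing the Liouville equation \eqref{eq:shape_meas_liou} against the constant function $v(t,\omega) = 1$ yields $\inp{1}{\mu_p} = \inp{1}{\delta_0 \otimes \mu_0} + \inp{\Lie_f 1}{\mu} = 1$. The marginal constraint \eqref{eq:shape_meas_marg_shape} tested against $z(\omega) = 1$ gives $\inp{1}{\mu_s} = \inp{1}{\mu_p} = 1$, and then \eqref{eq:shape_meas_marg_coord} tested against $w(x) = 1$ gives $\inp{1}{\eta} = \inp{1}{A(s;\omega)_\# \mu_s} = \inp{1}{\mu_s} = 1$, where the last equality uses that pushforwards preserve total mass. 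Finally, testing \eqref{eq:shape_meas_liou} against $v(t,\omega) = t$ produces $\inp{1}{\mu} = \inp{t}{\mu_p} - \inp{0}{\mu_0} \leq T \inp{1}{\mu_p} = T$, which is finite by A1'.

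No serious obstacle arises, since all steps rely only on the existence of a finite horizon $T$ and on elementary manipulations of the measure constraints; no compactness of the spatial sets is actually needed to obtain mass bounds (though compactness in A1' is what eventually turns these mass bounds into bounds on all moments via Lemma \ref{lem:finite_mom}). The only minor subtlety is noting that the pushforward $A(s;\omega)_\# \mu_s$ shares its total mass with $\mu_s$, which follows from the defining duality pairing.
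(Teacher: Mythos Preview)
Your proposal is correct and follows essentially the same approach as the paper's proof: propagate the mass normalization $\inp{1}{\mu_0}=1$ through the Liouville equation and the two marginal constraints using the test functions $1$ and $t$, obtaining $\inp{1}{\mu_p}=\inp{1}{\mu_s}=\inp{1}{\eta}=1$ and $\inp{1}{\mu}\leq T$. Your write-up is simply more explicit about which test function is used at each step and about the pushforward preserving mass.
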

\begin{proof}
This follows from the steps of Lemma \ref{lem:bounded}. The conditions hold that $1 = \inp{1}{\mu_0} = \inp{1}{\mu_p}$ \eqref{eq:shape_meas_liou}, $\inp{1}{\mu_p} = \inp{1}{\mu_s}$ \eqref{eq:shape_meas_marg_shape}, $\inp{1}{\mu_s} = \inp{1}{\eta}$ \eqref{eq:shape_meas_marg_coord}, and $\inp{1}{\mu} \leq T$ by \eqref{eq:shape_meas_liou}.
\end{proof}

}

\subsection{Shape Distance Function Program}

Defining a new dual function $z(\omega)$ against constraint \eqref{eq:shape_meas_marg_shape} (also observed in \eqref{eq:shape_demo_meas_marg_shape}), the  Lagrangian of problem \eqref{eq:shape_meas} is:
\begin{align}
    \label{eq:lagrangian_shape}
    \scL &= \inp{c(x, y)}{\eta} + \inp{v(t, x)}{ \delta_0 \otimes\mu_0 + \Lie_f^\dagger \mu - \mu_p} \nonumber\\
    &+ \inp{z(\omega)}{\pi^\omega_\#(\mu_p - \mu_s)} + \gamma(1 - \inp{1}{u_0}) \\
    &+ \inp{w(x)}{A(s;\omega)_\# \mu_s - \pi^x_\# \eta }. \nonumber
\end{align}

The Lagrangian in \eqref{eq:lagrangian_shape} may be manipulated into,
\begin{align}
    \label{eq:lagrangian_shape_2}
    \scL &=  \gamma + \inp{c(x, y)-w(x)}{\eta} + \inp{v(0, \omega)-\gamma}{\mu_0}\nonumber \\
    &+ \inp{\Lie_f v(t, \omega)}{\mu} + \inp{z(\omega) - v(t, \omega)}{\mu_p}  \\
    &+  \inp{w(A(s;\omega)) -z(\omega)}{\mu_s}.\nonumber
\end{align}
The dual of program \eqref{eq:shape_meas} provided by minimizing the Lagrangian \eqref{eq:lagrangian_shape_2} with respect to $(\eta, \mu_s, \mu_p, \mu, \mu_0)$ is,
\begin{subequations}
\label{eq:shape_cont}
\begin{align}
    d^* = & \ \rev{\sup}_{\gamma \in \R} \quad \gamma & \\
    & v(0, \omega) \geq \gamma&  &\forall x \in \Omega_0 \label{eq:shape_cont_init}\\
    & c(x, y)\geq w(x) & & \forall (x, y) \in X \times X_u \label{eq:shape_cont_cw}\\
    & w(A(s;\omega)) \geq z(\omega)  & &\forall (s, \omega) \in S \times \Omega \label{eq:shape_cont_wz}\\ 
    & z(\omega) \geq v(t, \omega) & &\forall (t, \omega) \in [0, T] \times \Omega \label{eq:shape_cont_zv}  \\
    & \Lie_f v(t, \omega) \geq 0 & &\forall (t, \omega) \in [0, T] \times \Omega \label{eq:shape_cont_f}\\
    &  w \in C(X), \ z \in C(\Omega) \\
    &v \in C^1([0, T] \times X). & \label{eq:shape_cont_v}
\end{align}
\end{subequations}

\rev{
\begin{theorem}
Problems \eqref{eq:shape_meas} and \eqref{eq:shape_cont} are strongly dual under assumptions A1'-A6'.
\end{theorem}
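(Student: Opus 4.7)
The plan is to adapt the argument used for Theorem \ref{thm:strong_duality_dist} (Appendix \ref{app:duality}) to the enlarged system of variables appearing in \eqref{eq:shape_meas}. The overall strategy is to cast \eqref{eq:shape_meas} as an instance of an infinite-dimensional conic LP on the Banach space of signed Borel measures on the disjoint union $\Omega_0 \sqcup ([0,T]\times \Omega) \sqcup ([0,T]\times \Omega) \sqcup (\Omega\times S) \sqcup (X\times X_u)$, paired in duality with continuous functions on that space under the weak-$*$ topology. The key ingredients needed for zero-gap strong duality (in the sense of e.g.\ Anderson--Nash or Theorem 3.10 of Tacchi) are (i) compactness of each support so that bounded sets of nonnegative measures are weak-$*$ sequentially compact; (ii) continuity of the constraint operator into the dual space of test functions; (iii) a uniform a priori bound on the masses of all feasible measures; and (iv) attainment of the primal supremum.

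First, I would verify (i) by invoking A1', which forces $[0,T]$, $\Omega$, $S$, $X$, and $X_u$ to be compact, with $\Omega_0 \subset \Omega$ compact as well, so all five measure spaces in \eqref{eq:shape_meas} are Radon spaces with compact supports. Second, continuity of the constraint maps on the right-hand sides of \eqref{eq:shape_meas_liou}--\eqref{eq:shape_meas_marg_coord} follows from the Lipschitz property of $f$ in A2' (controlling $\Lie_f$ acting on $C^1$ test functions), together with continuity of the coordinate map $A(s;\omega)$ in A4' (which makes the pushforward $A(s;\omega)_\# \mu_s$ a continuous linear operator on $\mathcal{M}(\Omega\times S)$ with respect to weak-$*$). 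Third, the uniform mass bound needed for (iii) is exactly Lemma \ref{lem:shape_mass}. Together, these imply that the feasible set of \eqref{eq:shape_meas} is a weak-$*$ closed, bounded, and hence weak-$*$ compact subset of the product measure space, so the infimum in \eqref{eq:shape_meas_cost} is attained because the objective $\langle c,\eta\rangle$ is weak-$*$ continuous by A3' and compactness of $X\times X_u$.

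With primal attainment in hand, the no-duality-gap claim $p^*=d^*$ can be established by applying an analogue of the argument in Appendix \ref{app:duality}: one shows that the set of achievable right-hand-side data for the constraint system is closed in the appropriate weak-$*$ topology, and then Lagrangian duality over a closed convex cone yields equality. The additional complication over Theorem \ref{thm:strong_duality_dist} is the presence of the shape measure $\mu_s$ and the pushforward constraint \eqref{eq:shape_meas_marg_coord}; the key observation is that $A(s;\omega)_\# \mu_s$ is a continuous linear map from $\mathcal{M}_+(\Omega\times S)$ to $\mathcal{M}_+(X)$ because $A$ is continuous on a compact domain (A4'), so this extra constraint preserves the weak-$*$ closedness needed in the Appendix \ref{app:duality} argument. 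Assumptions A5' and A6' (non-return) enter only indirectly, through the construction of a feasible trajectory-based primal witness in Theorem \ref{thm:shape_meas_lower} and via consistency of Liouville's equation when $\Omega$ or $X$ might be exited.

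The step I expect to be the main obstacle is verifying closedness of the image of the constraint operator under $A(s;\omega)_\#$: it is not enough that $A$ be continuous, one needs that if $\eta_n \to \eta$ and $\mu_{s,n}\to \mu_s$ weakly-$*$ with $\pi^x_\# \eta_n = A_\# \mu_{s,n}$ for each $n$, then $\pi^x_\# \eta = A_\# \mu_s$. This follows from the continuous-mapping theorem for weak-$*$ convergence on compact Polish spaces, but it is the point where one must be careful about the joint weak-$*$ topology on the product of measure spaces. Once this is settled, the remainder of the proof is a term-by-term transcription of Appendix \ref{app:duality}, with the Lagrangian \eqref{eq:lagrangian_shape_2} furnishing the dual pairing that yields \eqref{eq:shape_cont_init}--\eqref{eq:shape_cont_v} and attainment of the supremum $d^*$ by a standard $\epsilon$-smoothing argument mirroring Corollary \ref{cor:smooth}.
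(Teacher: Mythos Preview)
Your proposal is correct and follows essentially the same approach as the paper: both reduce the claim to verifying the hypotheses of a standard infinite-dimensional LP duality result (the paper cites Theorem 2.6 of \cite{tacchi2021thesis}, checking compactness of supports via A1', continuity of the constraint maps via A2'--A4', bounded mass via Lemma \ref{lem:shape_mass}, and feasibility via Theorem \ref{thm:shape_meas_lower}). Your write-up is considerably more detailed than the paper's one-line proof, and your identification of the pushforward $A(s;\omega)_\#$ as the new ingredient requiring care, together with your observation that A5'--A6' are not directly needed for the duality gap itself, are both accurate refinements.
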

\begin{proof}
 This holds by extending the proof of Theorem \ref{thm:strong_duality_dist} found in Appendix \ref{app:duality} and applying Theorem 2.6 of \cite{tacchi2021thesis}.
\end{proof}
 
 \begin{remark}
 Program \ref{eq:shape_cont} imposes that a chain of lower bounds $v(t,\omega) \leq z(\omega) \leq w(A(s; \omega)) \leq c(A(s;\omega)), y)$ holds for all $(s, \omega, t, y) \in S \times \Omega \times [0, T] \times  X_u$ (similar in principle to Remark \ref{rmk:chain_lower}).
 \end{remark}

 \begin{theorem}
 Under assumptions A1'-A6', there exists a feasible $(\gamma, v, w, z)$ such that $P^* \rw{-} \delta \leq d^* \leq P^*$ between \eqref{eq:shape_cont} and \eqref{eq:shape_dist}.
 \end{theorem}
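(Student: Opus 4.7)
The plan is to mirror the structure of Theorem \ref{thm:relaxation_gap}, building up the dual functions $(w,z,v,\gamma)$ in a chain that respects the bound $v \leq z \leq w\!\circ\! A \leq c$ required by constraints \eqref{eq:shape_cont_cw}--\eqref{eq:shape_cont_zv}. The upper bound $d^* \leq P^*$ is immediate from the just-established strong duality $d^* = p^*$ together with the weak-duality bound $p^* \leq P^*$ of Theorem \ref{thm:shape_meas_lower}. The substantive task is to exhibit, for each $\delta>0$, a feasible tuple achieving $\gamma \geq P^* - \delta$.

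First I would fix the top of the chain by taking $w(x) = c(x; X_u) = \inf_{y\in X_u} c(x,y)$. Assumptions A1' and A3' make $c(x;X_u)$ a continuous function on the compact set $X$, so constraint \eqref{eq:shape_cont_cw} is satisfied with equality on the minimizer and the regularity \eqref{eq:shape_cont} on $w$ is met. Next I would set $z(\omega) = \inf_{s \in S} w(A(s;\omega)) = \inf_{s \in S} c(A(s;\omega); X_u)$. Because $S$ is compact (A1') and $A$ is continuous (A4'), the Berge-type maximum theorem delivers $z \in C(\Omega)$, and \eqref{eq:shape_cont_wz} is tight in $s$. Combining these two steps yields the reduced problem of finding $v \in C^1([0,T]\times\Omega)$ and $\gamma \in \R$ satisfying \eqref{eq:shape_cont_init}, \eqref{eq:shape_cont_zv}, \eqref{eq:shape_cont_f}, with $z$ playing the role previously played by $c(\cdot;X_u)$ in Theorem \ref{thm:relaxation_gap}.

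Now I would invoke exactly the construction used in Theorem \ref{thm:relaxation_gap}: apply Appendix D of \cite{fantuzzi2020bounding} to the minimizing peak estimation problem with $C^0$ state cost $z(\omega)$, $\omega$-dynamics $\dot\omega = f(t,\omega)$, initial set $\Omega_0$, and state constraint $\Omega$. This produces, for any tolerance $\delta > 0$, a $C^1$ function $W(t,\omega)$ on $[0,T]\times\Omega$ satisfying the four inequalities \eqref{eq:fantuzzi_W} with $w$ replaced by $z$ and $X,X_0$ replaced by $\Omega,\Omega_0$, and a shifted $v(t,\omega) = W(t,\omega) - (2/5)\delta - \delta(T-t)/(5T)$ as in \eqref{eq:fantuzzi_v}. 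This $v$ strictly satisfies the Lie-derivative inequality \eqref{eq:shape_cont_f}, the terminal inequality \eqref{eq:shape_cont_zv}, and the initial inequality \eqref{eq:shape_cont_init} with $\gamma = P^* - \delta$. Assembling $(\gamma, v, w, z)$ delivers a feasible tuple of \eqref{eq:shape_cont} with $\gamma = P^* - \delta$, closing the gap.

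The main obstacle is justifying that Appendix D of \cite{fantuzzi2020bounding} is legitimately applicable to the $\omega$-system here. The construction there assumes trajectories remain inside the state-constraint set; to import it under our weaker A5' I would adapt the non-return argument used inside the proof of Theorem \ref{thm:relaxation_gap}: truncate the auxiliary construction of D.21 of \cite{fantuzzi2020bounding} at the first boundary-touch time $t_2 = \min\{T,\; \inf\{t : \omega(t\mid\omega_0) \in \partial\Omega\}\}$, which is well-defined by compactness of $\Omega$ and continuity of trajectories under A2'. Note that A6', which concerns non-return of the lifted trajectory $A(s;\omega(\cdot))$ inside $X$, is not needed at this stage because the Lie-derivative constraint \eqref{eq:shape_cont_f} is posed in $\omega$ only; A6' is what keeps $z$ a valid ceiling for the value function, and is automatically compatible since the construction of $w$ and $z$ absorbs the body-frame geometry. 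A secondary subtlety is that in principle one might prefer polynomial $v,w,z$ (as in Corollary \ref{cor:smooth}); a Stone--Weierstrass/$C^1$ polynomial approximation of $(w,z,v)$ with a further slack of $\delta/5$ each gives the smooth/polynomial analogue without additional difficulty.
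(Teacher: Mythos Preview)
Your proposal is correct and follows essentially the same route as the paper: both take $w(x)=c(x;X_u)$ and $z(\omega)=\inf_{s\in S}\,w(A(s;\omega))=\inf_{(s,y)\in S\times X_u} c(A(s;\omega),y)$, then reduce to a peak-minimization problem in $\omega$ with $C^0$ cost $z$ and invoke the Appendix~D construction of \cite{fantuzzi2020bounding} (with the non-return truncation under A5') to produce the $\delta$-optimal $v$ and $\gamma=P^*-\delta$. Your write-up is in fact more detailed than the paper's on the Berge-type continuity of $z$ and on the precise role of the assumptions.
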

 \begin{proof}
 This proof follows from the steps of Theorem \ref{thm:relaxation_gap}. The following $C^0$ functions are feasible for constraints \eqref{eq:shape_cont_cw} and \eqref{eq:shape_cont_wz},
 \begin{align*}
     w(x) &= \inf_{y\in X_u} c(x,y), & 
     z(\omega) &= \inf_{(s,y) \in S \times X_u} c(A(s; \omega), y).
 \end{align*}
 The function $z(\omega)$ is $C^0$ since it is generated by the infimum of the composition of two $C^0$ functions $c$ and $A$.
 The auxiliary function $v(t, \omega)$ may be chosen to solve the peak minimization problem $\min_{t, \omega_0} z(\omega(t \mid \omega_0))$ along trajectories starting from $\Omega_0$ up to $\delta$-optimality (by the Flow map construction of \cite{fantuzzi2020bounding} as used in Theorem \ref{thm:relaxation_gap}).
 
 \end{proof}
 
\begin{cor}
Problem \eqref{eq:shape_dist} may be approximated up to $\delta$-optimality by smooth \rw{(polynomial)} functions $(v, w, z)$ under Assumptions A1'-A6'.
\end{cor}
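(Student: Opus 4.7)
The plan is to adapt the argument of Corollary \ref{cor:smooth} to the shape setting, turning the $C^0/C^1$ functions produced by the preceding theorem into polynomial ones while preserving the chain of lower bounds in \eqref{eq:shape_cont_cw}--\eqref{eq:shape_cont_zv}. Fix an arbitrary $\delta > 0$. Begin with the $C^0$ functions $w(x) = \inf_{y \in X_u} c(x,y)$ and $z(\omega) = \inf_{(s,y) \in S \times X_u} c(A(s;\omega), y)$ together with the $C^1$ auxiliary $v(t,\omega)$ from the previous theorem, which are feasible in \eqref{eq:shape_cont} and achieve an objective within, say, $\delta/2$ of $P^*$. Split a residual budget $\delta/2$ into $\epsilon_w + \epsilon_z + \epsilon_v$ corresponding to the three approximations to follow.

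First, I would apply the Stone--Weierstrass theorem on the compact set $X$ to obtain $\bar w(x) \in \R[x]$ with $\norm{w - \bar w}_{C^0(X)} \leq \epsilon_w/2$, then shift to $\bar w(x) - \epsilon_w$ so that the resulting polynomial is at most $w(x) - \epsilon_w/2$, guaranteeing \eqref{eq:shape_cont_cw} holds for $\bar w$ with margin. Next, on the compact set $\Omega$, Stone--Weierstrass provides a polynomial $\bar z(\omega) \in \R[\omega]$ with $\norm{z - \bar z}_{C^0(\Omega)} \leq \epsilon_z/2$; shifting $\bar z$ downward by $\epsilon_z$ ensures $\bar z(\omega) \leq \bar w(A(s;\omega))$ for all $(s,\omega) \in S \times \Omega$ provided $\epsilon_z$ is chosen strictly larger than $\epsilon_w/2$ plus the $C^0(S\times\Omega)$ error contributed by composing $\bar w$ with the continuous map $A$. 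This secures \eqref{eq:shape_cont_wz} with $(\bar w, \bar z)$ in place of $(w, z)$.

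Third, using the polynomial state cost $\bar z$, I would invoke the Fantuzzi flow-map construction of \cite{fantuzzi2020bounding} (as modified in Theorem \ref{thm:relaxation_gap} to handle the non-return Assumptions A5'--A6') to produce a $C^1$ auxiliary $V(t,\omega)$ satisfying the analogues of \eqref{eq:fantuzzi_W}--\eqref{eq:fantuzzi_v} with tolerance $\epsilon_v/2$ relative to the peak estimation problem $\inf_{t, \omega_0} \bar z(\omega(t\mid\omega_0))$. Then, by Theorem 1.1.2 of \cite{llavona1986approximation}, $V$ admits a polynomial approximation $G \in \R[t,\omega]$ with $\norm{V - G}_{C^1([0,T]\times\Omega)}$ small enough (bounded in terms of $T$ and $\max_i \norm{f_i}_{C^0([0,T]\times\Omega)}$, as in the proof of Corollary \ref{cor:smooth}) that shifting $\bar v(t,\omega) = G(t,\omega) - \epsilon_v(1 - t/(2T))$ restores strict feasibility of both \eqref{eq:shape_cont_f} and \eqref{eq:shape_cont_zv}, with a terminal slack $\gamma = P^* - \delta$ at $t = 0$ satisfying \eqref{eq:shape_cont_init}.

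The main obstacle is bookkeeping the cascade of strict inequalities: each Stone--Weierstrass replacement must leave enough slack in its own constraint to absorb the error introduced at the next level, and the Lie-derivative constraint must still hold after the polynomial approximation of $V$. The non-polynomial coordinate map $A(s;\omega)$ (only $C^0$ by A4') is also a subtle point, since $\bar w \circ A$ need not be polynomial in $(s,\omega)$; however, only the inequality \eqref{eq:shape_cont_wz} is required, and this is a pointwise inequality on the compact set $S \times \Omega$ that survives uniform approximation. Assembling $(\gamma, \bar v, \bar w, \bar z)$ as above yields a polynomial feasible quadruple with $\gamma \geq P^* - \delta$, completing the proof.
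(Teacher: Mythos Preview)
Your proposal is correct and follows essentially the same route as the paper: Stone--Weierstrass approximations for $w$ and $z$ (with appropriate downward shifts to preserve the chain of inequalities), followed by the Fantuzzi flow-map construction and a $C^1$ polynomial approximation via \cite{llavona1986approximation} for $v$, with $\epsilon$ chosen small relative to $T$ and $\max_i\norm{f_i}_{C^0}$. Your treatment of the cascade bookkeeping and the non-polynomial composition $\bar w\circ A$ is in fact slightly more explicit than the paper's own proof.
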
 
\begin{proof}
For any $\epsilon > 0$, Stone Weierstrass approximations $\bar{w} \in \R[x], \bar{z} \in \R[\omega]$ may be constructed such that 
\rw{$\norm{c(x; X_u) - \epsilon/2 - \bar{w}(x)}_{C^0(X)} \leq \epsilon/2$}
and 
\rw{$\norm{\bar{w}(A(s; \omega)) - \epsilon/2 - \bar{z}(\omega)}_{C^0(\Omega)} \leq \epsilon/2$ (similar to Corollary \ref{cor:smooth}).

A polynomial $\bar{v}$ may be derived from a $\delta'$-approximation to the peak minimization problem with objective $\bar{z}(\omega)$ (in which $\delta' < (2/5) \delta$), in the same manner that $\bar{v}$ in Corrolary \ref{cor:smooth} utilized an objective of $\bar{w}$. By ensuring that  $\epsilon$ satisfies
\begin{equation*}
    \epsilon < \delta'/\left(\max\left[2, 2T, 2T \max_i \norm{f_i(t, \omega)}_{[0, T] \times \Omega}  \right]\right),
\end{equation*}
then $(\bar{v}, \bar{w}, \bar{z}, \gamma=P^* - \delta)$ will be feasible for  \eqref{eq:shape_cont_init}-\eqref{eq:shape_cont_v}.}
\end{proof}

\begin{remark}
 We briefly note that the \ac{LMI} formulation of \eqref{eq:shape_meas} will converge to $P^*$ under assumptions A1'-A6' if all sets $[0,T], X, X_u, \Omega_0, \Omega, S$ are Archimedean and if $f(t, \omega) \in \R[t,\omega], \  A(s; \omega) \in \R[s, \omega]$ (from Theorem \ref{thm:lmi_convergence}). Constraint \eqref{eq:shape_demo_meas_marg_coord} induces a linear expression in moments for $(\eta, \mu_s)$ for each $\alpha \in \N^n: \ 
     \inp{x^\alpha}{\eta} = \inp{A(s; \omega)^\alpha}{\mu_s}$.
\end{remark} 
 }
  \begin{remark}
 \label{rmk:shape_degree}
  If \rw{$A(s; \omega)$ is polynomial with degree $\kappa$}, then the $d$-degree relaxation of problem \eqref{eq:shape_meas} involves moments of $\mu_s$ up to order $2\kappa d$. For a system with $N_\omega$ orientation states and $N_s$ shape variables, the size of the moment matrix for $\mu_s$ is then $\binom{N_s + N_\omega + \kappa d}{\kappa d}$. LMI constraints associated with $\mu_s$ can become bottlenecks to computation, surpassing the contributions of $\mu$ and $\eta$ as $k$ increases. 
 \end{remark}
 
 \rev{
 \begin{remark}
 \label{rmk:shape_merge}
Continuing the discussion Remark \ref{rmk:combine_mup_eta}, the measures $\mu_s(s, \omega)$ and $\eta(x, y)$ may be combined together into a larger measure $\eta_s(s, \omega, y) \in \Mp{S \times \Omega \times X_u}$ with objective $\inf \inp{c(A(s; \omega), y)}{\eta_s}$ and constraint $\pi^\omega_\# \mu_p = \pi^\omega_\# \eta_s$. The moment matrix for $\eta_s$ would have the generally intractable size $\binom{N_s + N_\omega + n + \kappa d}{\kappa d}$.
\end{remark}


}

\section{Numerical Examples}
\label{sec:examples}

All code was written in Matlab 2021a, and is publicly available at the link \url{https://github.com/Jarmill/distance}.
The \rw{\acp{SDP}} were formulated by Gloptipoly3 \cite{henrion2003gloptipoly} through a Yalmip interface \cite{lofberg2004yalmip}, and were solved using Mosek \cite{mosek92}. The experimental platform was an Intel i9 CPU with a clock frequency of 2.30 GHz and 64.0 GB of RAM.
The squared-$L_2$ cost $c(x, y) = \sum_i (x_i - y_i)^2$ is used in solving Problem \eqref{eq:dist_lmi} unless otherwise specified. The documented bounds are the square roots of the returned quantities, yielding lower bounds to the $L_2$ distance.

\subsection{Flow System with Moon}

The half-circle unsafe set in Figure \ref{fig:safety_distance} is a convex set. The moon-shaped unsafe set $X_u$ in Figure \ref{fig:flow_moon_collision} is the  nonconvex region outside the circle with radius $1.16$ centered at $(0.6596, 0.3989)$ and inside the circle with radius 0.8 centered at $(0.4, -0.4)$. The dotted red line demonstrates that trajectories of the Flow system would be deemed unsafe if $X_u$ was relaxed to its convex hull.

        \begin{figure}[ht]
        \centering
        \includegraphics[width=0.5\linewidth]{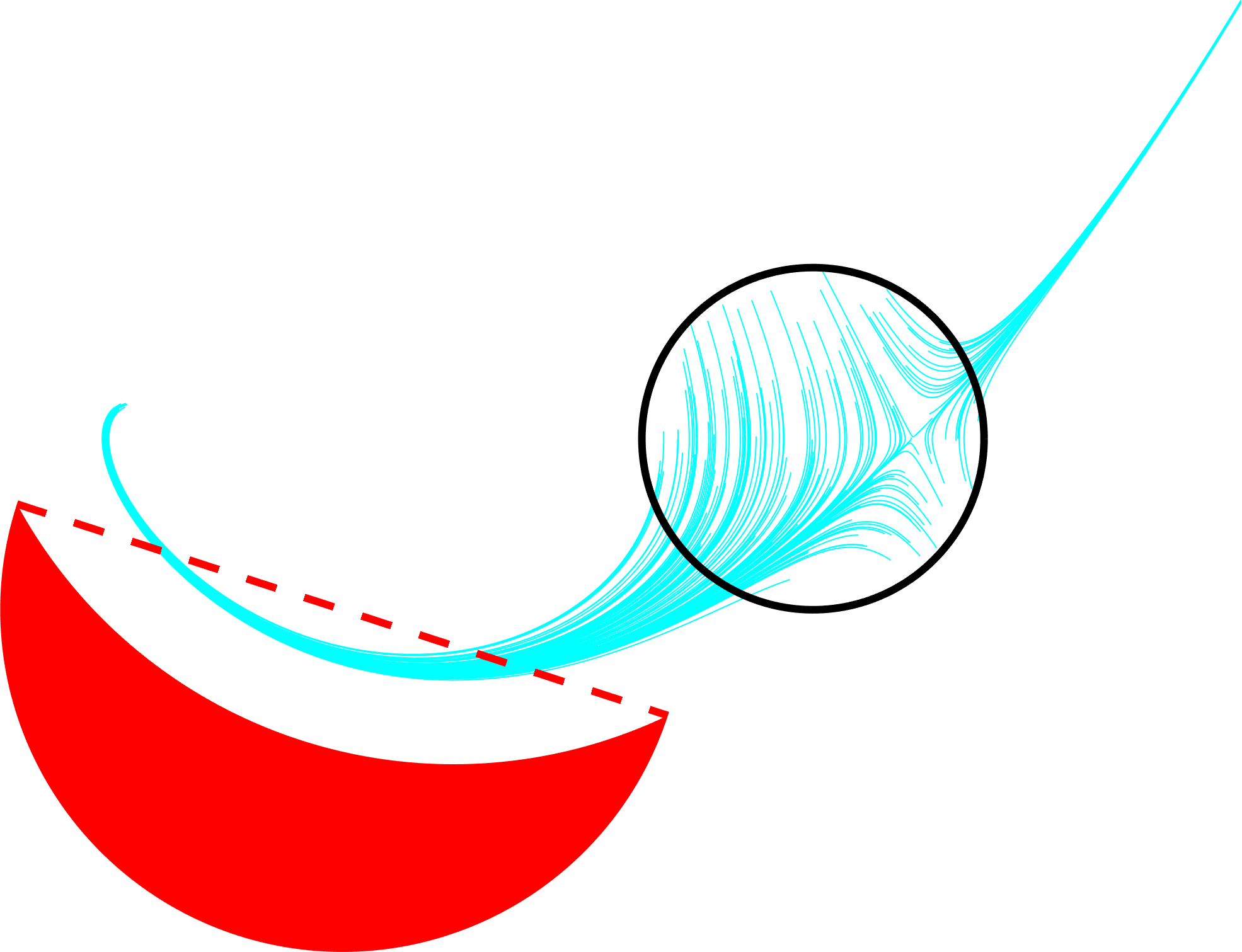}
        \caption{\label{fig:flow_moon_collision}Collision if $X_u$ is relaxed to its convex hull.}
    \end{figure}

The $L_2$ distance bound of $0.1592$ in Figure \ref{fig:flow_moon_dist} was found at the degree-5 relaxation of Problem \eqref{eq:dist_lmi} \rw{with $X=[-3,3]^2$}. The moment matrices $\M_d(m^0), \; \M_d(m^p), \M_d(m^\eta)$ at $d=5$ were approximately rank-1 and near-optimal trajectories were successfully extracted. This near-optimal trajectory starts at $x_0^* \approx (1.489, -0.3998)$ and reaches a closest distance between $x_p^* \approx (1.113, -0.4956)$ and $y^* \approx (1.161, -0.6472)$ at time $t_p^* \approx 0.1727$.
The distance bounds computed at the first five relaxations are $L^{1:5}_2 = [1.487 \times 10^{-4}, 2.433 \times 10^{-4}, 0.1501, 0.1592, 0.1592]$.

        \begin{figure}[ht]
        \centering
        \includegraphics[width=0.5\linewidth]{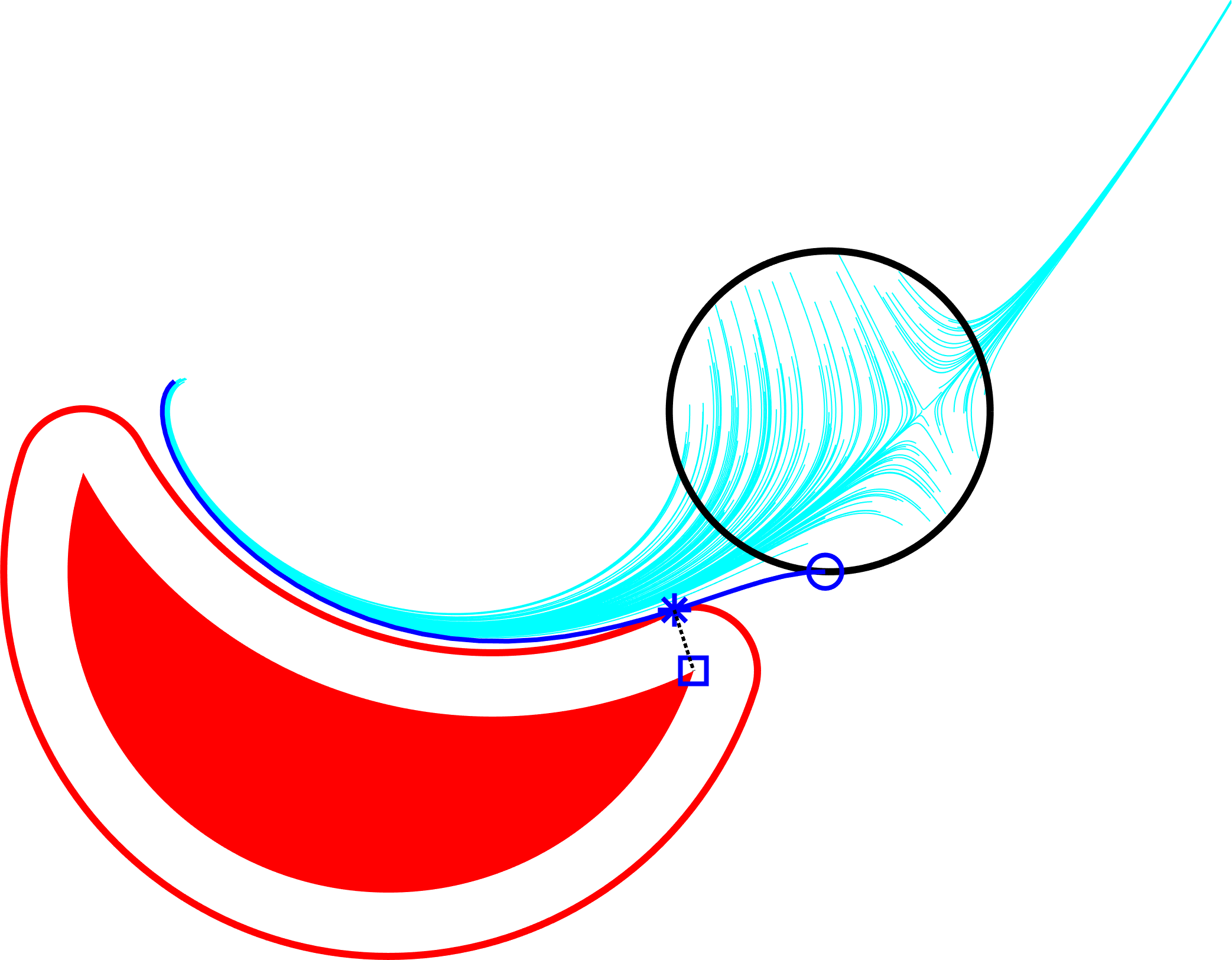}
        \caption{\label{fig:flow_moon_dist} $L_2$ bound of $0.1592$}
    \end{figure}

\subsection{Twist System}
\label{sec:twist}

    The Twist system is a three-dimensional dynamical system parameterized by matrices $A$ and $B$,
    \begin{align}
    \label{eq:twist_dynamics}
    \dot{x}_i(t) &= \textstyle \sum_j A_{ij} x_j - B_{ij}(4x_j^3 - 3x_j)/2,
    \end{align}
    \begin{align}
    \label{eq:twist_parameters}
    A &= \begin{bmatrix}-1 & 1 & 1\\ -1 &0 &-1\\ 0 & 1 &-2\end{bmatrix} &  B &=  \begin{bmatrix}-1 & 0 & -1\\ 0 &1 &1\\ 1 & 1 &0\end{bmatrix}.
       \end{align}
      
    The cyan curves in each panel of Figure \ref{fig:twist} are plots of trajectories of the Twist system between times $t \in [0, 5]$. These trajectories start at the $X_0 = \{x \mid(x_1+0.5)^2 + x_2^2 + x_3^2 \leq 0.2^2\}$ which is pictured by the grey spheres. The unsafe set $X_u = \{x \mid (x_1-0.25)^2 + x_2^2 + x_3^2 \leq 0.2^2, \ x_3 \leq 0\}$ is drawn in the red half-spheres. \rw{The underlying space is $X=[-1,1]^3$.}
    
    The red shell in Figure \ref{fig:twist_l2} is
   the cloud of points within an $L_2$ distance of $0.0427$ of $X_u$, as found through a degree 5 relaxation of \eqref{eq:dist_lmi}.
Figure \ref{fig:twist_l4} involves an $L_4$ contour of $0.0411$, also found at \rw{order 5}. The first few distance bounds for the $L_2$ distance are $L^{1:5}_2 = [0, 0, 0.0336, 0.0425, 0.0427]$, and for the $L_4$ distance are $L^{2:5}_4 = [0, 0.0298, 0.0408, 0.0413]$. 
Fourth degree moments are required for the $L_4$ metric, so the $L^{2:5}_4$ sequence starts at \rw{order} 2.

\begin{figure}[ht]
     \centering
     \begin{subfigure}[b]{0.48\linewidth}
         \centering
         \includegraphics[width=0.7\linewidth]{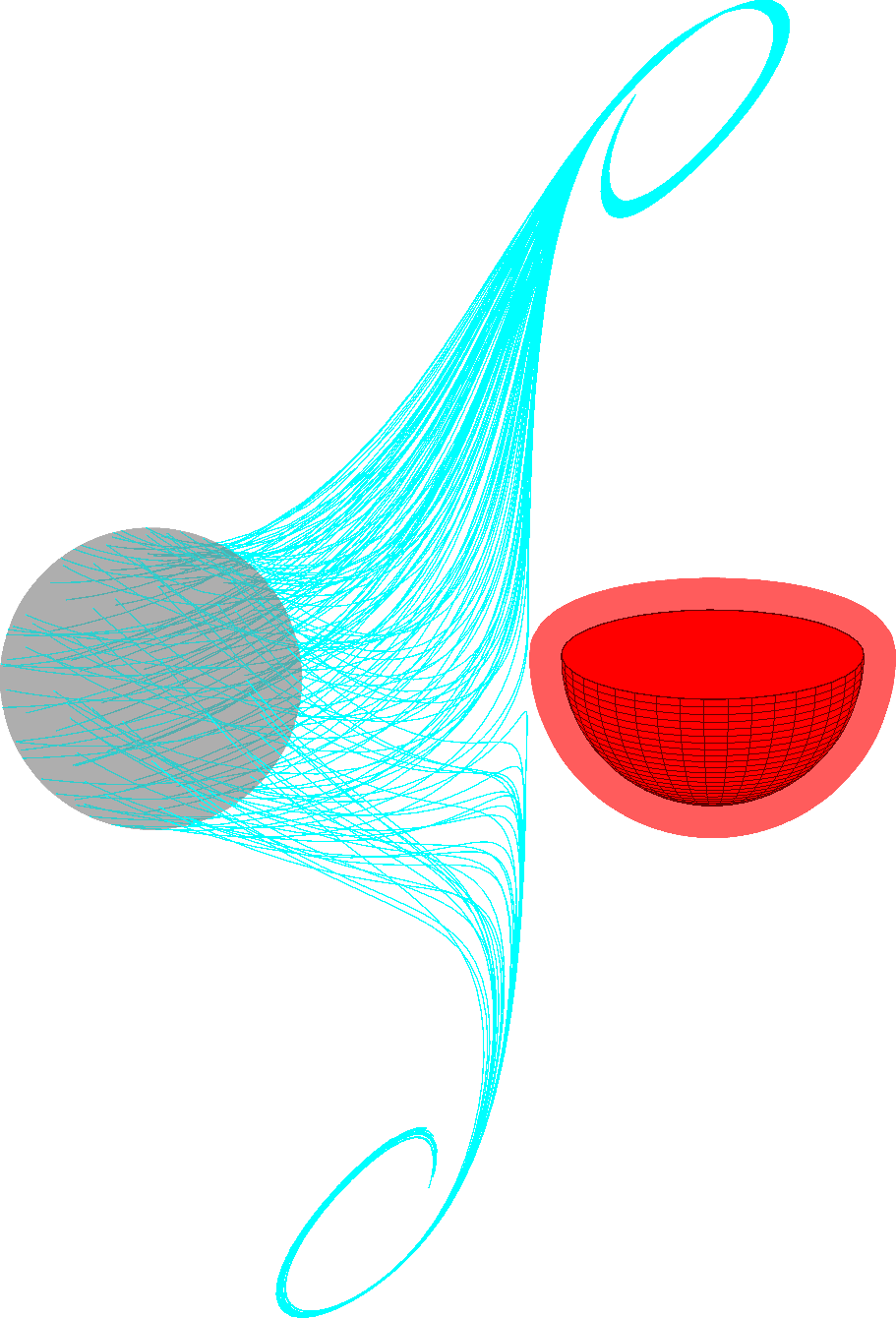}
         \caption{\label{fig:twist_l2} $L_2$ bound of $0.0427$}
         
     \end{subfigure}
     \;
     \begin{subfigure}[b]{0.48\linewidth}
         \centering
         \includegraphics[width=0.7\linewidth]{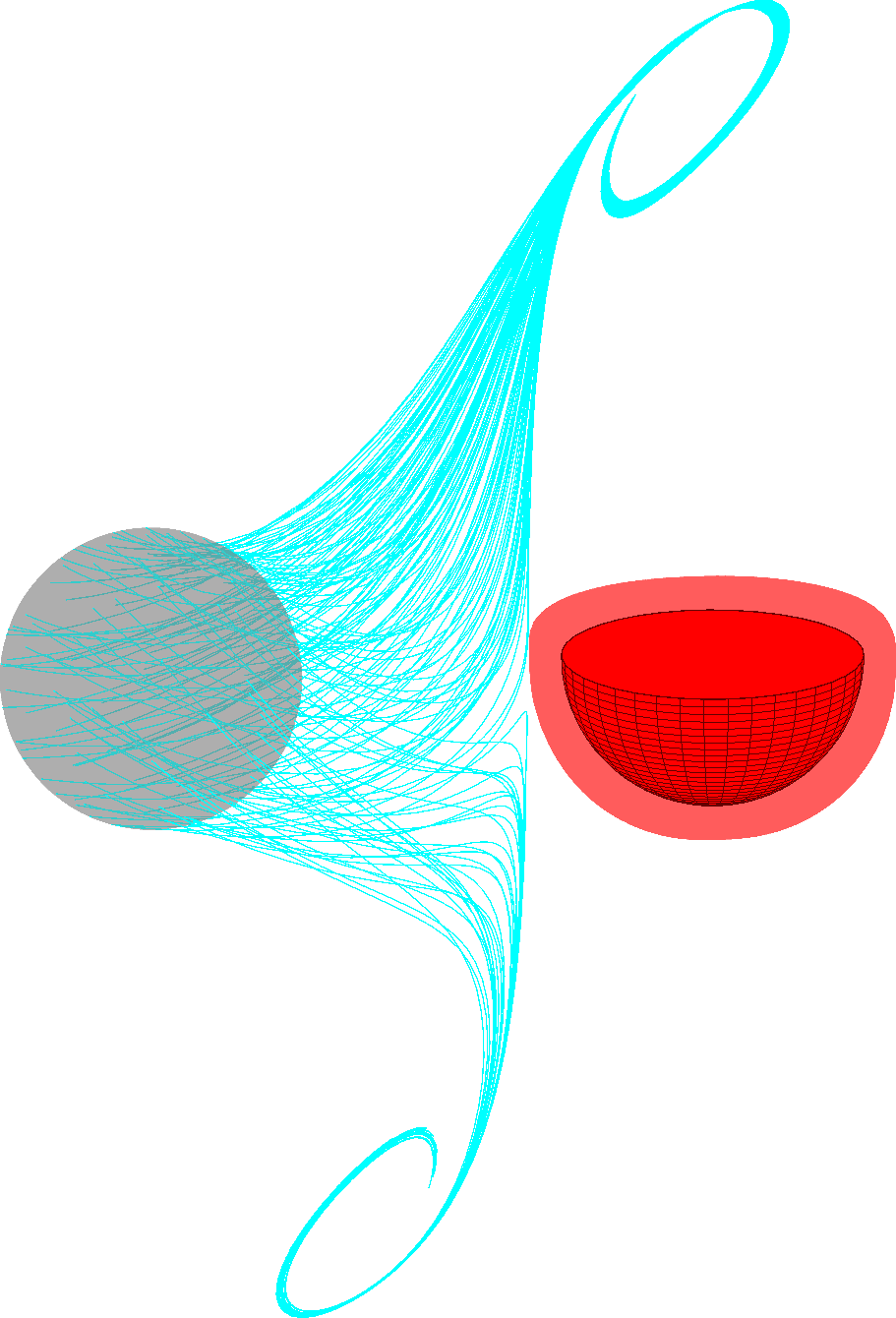}
         \caption{\label{fig:twist_l4}$L_4$ bound of $0.0411$}
     \end{subfigure}
      \caption{\label{fig:twist}Distance contours at order-5 relaxation for the  Twist system \eqref{eq:twist_dynamics}}
\end{figure}  


Table \ref{tab:twist_csp_l2} and \ref{tab:twist_csp_time} lists the $L_2$ bounds and runtimes respectively generated by a distance estimation task between trajectories and the half sphere of the above $L_2$ Twist system example. The high-degree relaxations (orders 4 and 5) are significantly faster as found by \rw{solving the \ac{SDP} associated with} the sparse \ac{LMI} (dual to the sparse \ac{SOS} with Putinar expression  \eqref{eq:csp_putinar}) as compared to the standard \rw{program} \eqref{eq:dist_lmi}. The certifiable $L_2$ bounds returned are roughly equivalent between relaxations.

\begin{table}[h]
    \centering
    \caption{\label{tab:twist_csp_l2} $L_2$ bounds for the Twist Example }
    \begin{tabular}{r|c c c c c}
         order &  2 & 3 & 4 & 5 & 6 \\
         Standard \ac{LMI} \eqref{eq:dist_lmi}& 0.000 & 0.0313 & 0.0425 & 0.0429 & 0.0429 \\ 
         Sparse \ac{LMI} with \eqref{eq:csp_putinar} & 
         0.000 & 0.0311 & 0.0424 & 0.0430 & 0.0429
    \end{tabular}
\end{table}

\begin{table}[h]
    \centering
    \caption{\label{tab:twist_csp_time} Time in seconds for the Twist Example}
    \begin{tabular}{r|c c c c c}
         order &  2 & 3 & 4 & 5 & 6 \\
         Standard \ac{LMI} \eqref{eq:dist_lmi}& 0.32 & 1.92 & 47.55 & 502.29 & 4028.94 \\ 
         Sparse \ac{LMI} with \eqref{eq:csp_putinar}&         0.31 & 1.19 & 7.07 & 45.89 & 184.42
    \end{tabular}
\end{table}

\subsection{Shape Examples}

Figure \ref{fig:shape_translate} visualizes a near-optimal trajectory of the shape distance estimation for orientations $\omega \in \R^2$ evolving as the flow system with an initial condition $\Omega_0 = \{\omega: (\omega_1-1.5)^2 + \omega_2^2 \leq 0.4^2\}$ \rw{in the space $\Omega: (\omega_1, \omega_2) \in [-3, 3]^2, \ \omega_3^2 + \omega_4^2 = 1$ (with a state set of $X = [-3, 3]^2$). }Suboptimal trajectories were suppressed in visualization to highlight the shape structure and attributes of the near-optimal trajectory. The degree-1 coordinate transformation function $A$ for pure translation with a constant rotation of $5\pi/12$ is,
\begin{equation}
    A(s; \omega) = \begin{bmatrix}\cos(5\pi/12) s_1 - \sin(5\pi/12) s_2 + \omega_1 \\ \cos(5\pi/12) s_1 + \sin(5\pi/12) s_2 + \omega_2\end{bmatrix}.
\end{equation}

This near-optimal trajectory with an $L_2$ distance bound of $0.1465$ was found at a degree-4 relaxation of Problem \eqref{eq:shape_meas}. The near-optimal trajectory is described by $\omega_0^* \approx (1.489, -0.3887)$,  $t_p^* \approx 3.090$, $\omega_p^* \approx (-0.1225, -0.3704)$,  $s^* \approx (-0.1, 0.1)$,   $x_p^* \approx (0, -0.2997)$, and $y^* \approx (-0.2261, -0.4739)$. The first five distance bounds are $L^{1:5}_2 = [1.205\times10^{-4},4.245\times10^{-4},0.1424,0.1465, 0.1465]$. 

    \begin{figure}[ht]
        \centering
        \includegraphics[width=0.5\linewidth]{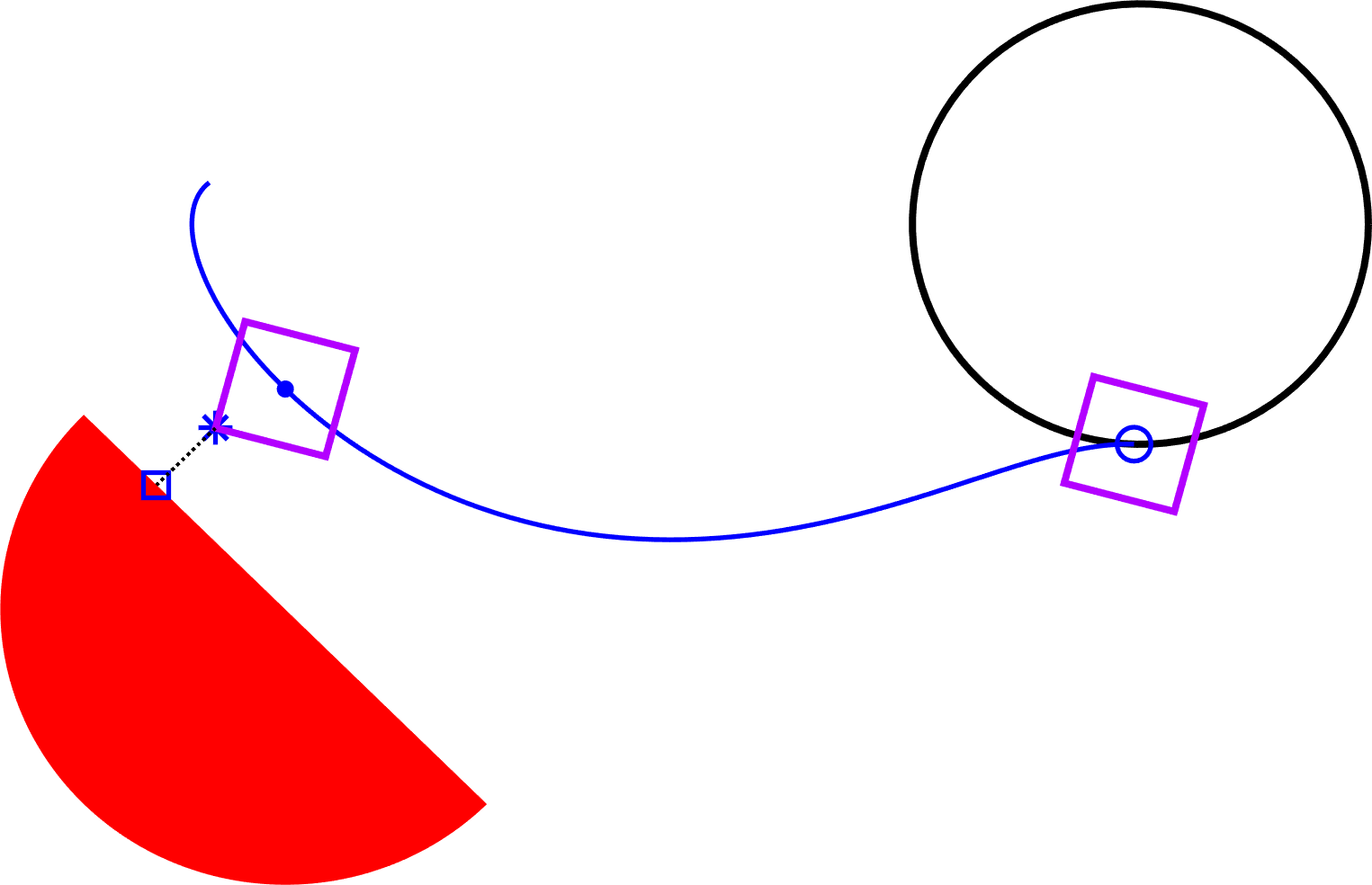}
        \caption{\label{fig:shape_translate} Translation, $L_2$ bound of $0.1465$}
    \end{figure}

\rev{In the following example, the shape $S$ is now rotating at an } angular velocity of 1 radian/second, as shown in the right panel of Fig. \ref{fig:shape_travel}.
The orientation $\omega \in SE(2)$ may be expressed as a semialgebraic lift through $\omega \in \R^4$ with trigonometric terms $\omega_3^2 + \omega_4^2 = 1$. The dynamics for this system are,
\begin{equation}
\label{eq:flow_rotate}
    \dot{\omega} = \begin{bmatrix}\omega_2 &  -\omega_1 -\omega_2 + \frac{1}{3}\omega^3_1 & -\omega_4 & \omega_3 \end{bmatrix}^T.
\end{equation}

The degree-2 coordinate transformation associated with this orientation is,
\begin{equation}
    A(s; \omega) = \begin{bmatrix}\omega_3 s_1 - \omega_4 s_2 + \omega_1 \\  \omega_3 s_1 + \omega_4 s_2 + \omega_2 \end{bmatrix}.
    \end{equation}

The shape measure $\mu_s \in \Mp{S \times \Omega}$ is distributed over 6 variables. The size of $\mu_s$'s moment matrix with $k = 2$ at degrees 1-4 is $[28, 210, 924, 3003]$. The first three distance bounds are $L^{1:3}_2 = [2.9158\times10^{-5}, 0.059162, 0.14255]$, and MATLAB runs out of memory on the experimental platform at degree 4. A successful recovery is achieved at the degree 3 relaxation, as pictured in Figure \ref{fig:shape_rotate}. This rotating-set near-optimal trajectory is encoded by $\omega_0^* \approx (1.575, -0.3928, 0.2588, 0.9659)$, $t_p^* \approx 3.371$, , $s^* \approx (-0.1, 0.1)$,  $x_p^* \approx (-0.1096, -0.3998)$, $\omega_p^* \approx (-0.0064, -0.2921, -0.0322, -0.9995)$, and $y^* \approx (-0.2104, -0.4896)$. Computing this degree-3 relaxation required 75.43 minutes.

    \begin{figure}[ht]
        \centering
        \includegraphics[width=0.5\linewidth]{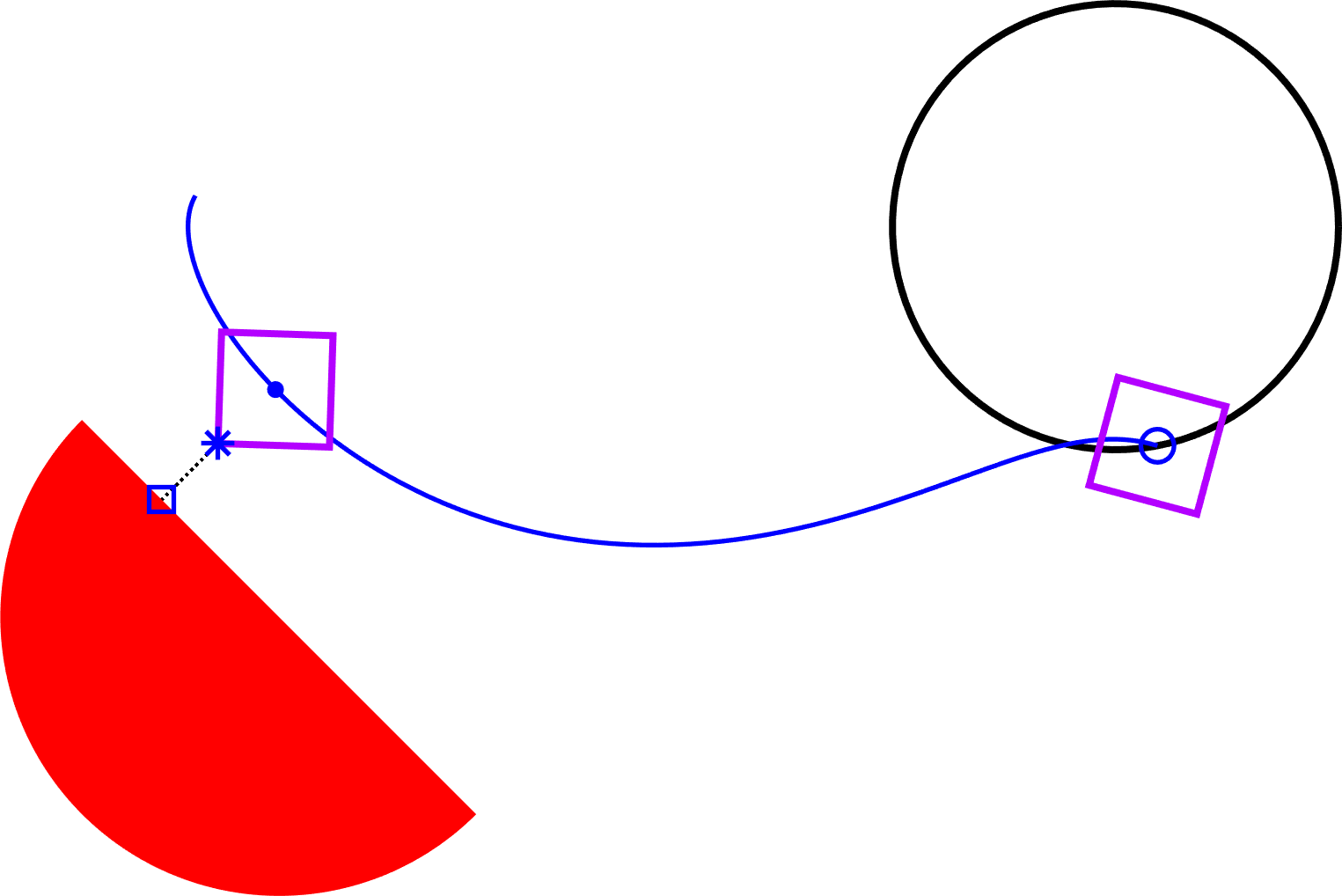}
        \caption{\label{fig:shape_rotate} Rotation, $L_2$ bound of $0.1425$}
    \end{figure}
\section{Extensions}
\label{sec:extensions}
This section presents modifications to the distance estimation programs in order to handle systems with uncertainties and distance functions $c$ generated by polyhedral norms.
\subsection{Uncertainty}


Distance estimation can be extended to systems with uncertainty. For the sake of simplicity, this section is restricted to time-dependent uncertainty. Assume that $H \subset \R^{N_h}$ is a compact set of plausible values of uncertainty, and the uncertain process $h(t), \forall t \in [0, T]$ may change arbitrarily in time within $H$ \cite{miller2021uncertain}.
The distance estimation problem with time-dependent uncertain dynamics is,
\begin{equation}
    \label{eq:dist_traj_w}
    \begin{aligned}
    P^* = & \rev{\inf}_{t,\:x_0, \:y, \: h(t)} c(x(t \mid x_0, h(t)), y) \\
    & \dot{x}(t) = f(t, x, h(t)), \ h(t) \in H & \quad \forall t \in [0, T] \\
    & x_0 \in X_0, \ y \in X_u.
    \end{aligned}
\end{equation}

The process $h(t)$ acts as an adversarial optimal control aiming to steer $x(t)$ as close to $X_u$ as possible. The occupation measure $\mu$ may be extended to a Young measure (relaxed control) $\mu \in \Mp{[0, T] \times X \times H}$ \cite{young1942generalized, henrion2008nonlinear}.

The Liouville equation \eqref{eq:dist_meas_liou} may be replaced by $\mu_p = \delta_0 \otimes \mu_0 + \pi^{tx}_\# \Lie^\dagger_f \mu$, which should be understood to read $\inp{v(t, x)}{\mu_p} = \inp{v(0, x)}{\mu_0} + \inp{\rev{\partial_t v(t,x) + f(t,x,h) \cdot \nabla_x v(t, x)}}{\mu}$ for all test functions $v \in C^1([0, T] \times X)$. 
\rev{Any trajectory with uncertainty process $h(t)$ may be represented by a tuple $(x_0, x_p, t_p, y, h(\cdot))$. This trajectory admits a measure representation similar to the proof of \ref{thm:meas_lower}, where the measure $\mu$ is \rw{the occupation measure of $ t \mapsto (t, x(t \mid x_0), h(t)))$ in times $[0,t_p]$.}}
The work in \cite{miller2021uncertain} applies a collection of existing uncertainty structures to peak estimation problems (time-independent, time-dependent, switching-type, box-type), and all of these structures may be applied to distance estimation.

To illustrate these ideas, consider the following Flow system with time-dependent uncertainty:
\begin{align}
\label{eq:flow_uncertain}
    \dot{x} &= \begin{bmatrix}
    x_2 \\
    (-1+h)x_1 -x_2 + \frac{1}{3}x_1^3
    \end{bmatrix} & h \in [-0.25, 0.25].
\end{align}

An $L_2$ distance bound of $0.1691$ is computed at the degree 5 relaxation of the uncertain distance estimation program, as visualized in Figure \ref{fig:flow_uncertain}. The first five distance bounds are $L_2^{1:5} = [5.125\times10^{-5}, 1.487\times10^{-4}, 0.1609, 0.1688, 0.1691]$.

        \begin{figure}[ht]
        \centering
        \includegraphics[width=0.45\linewidth]{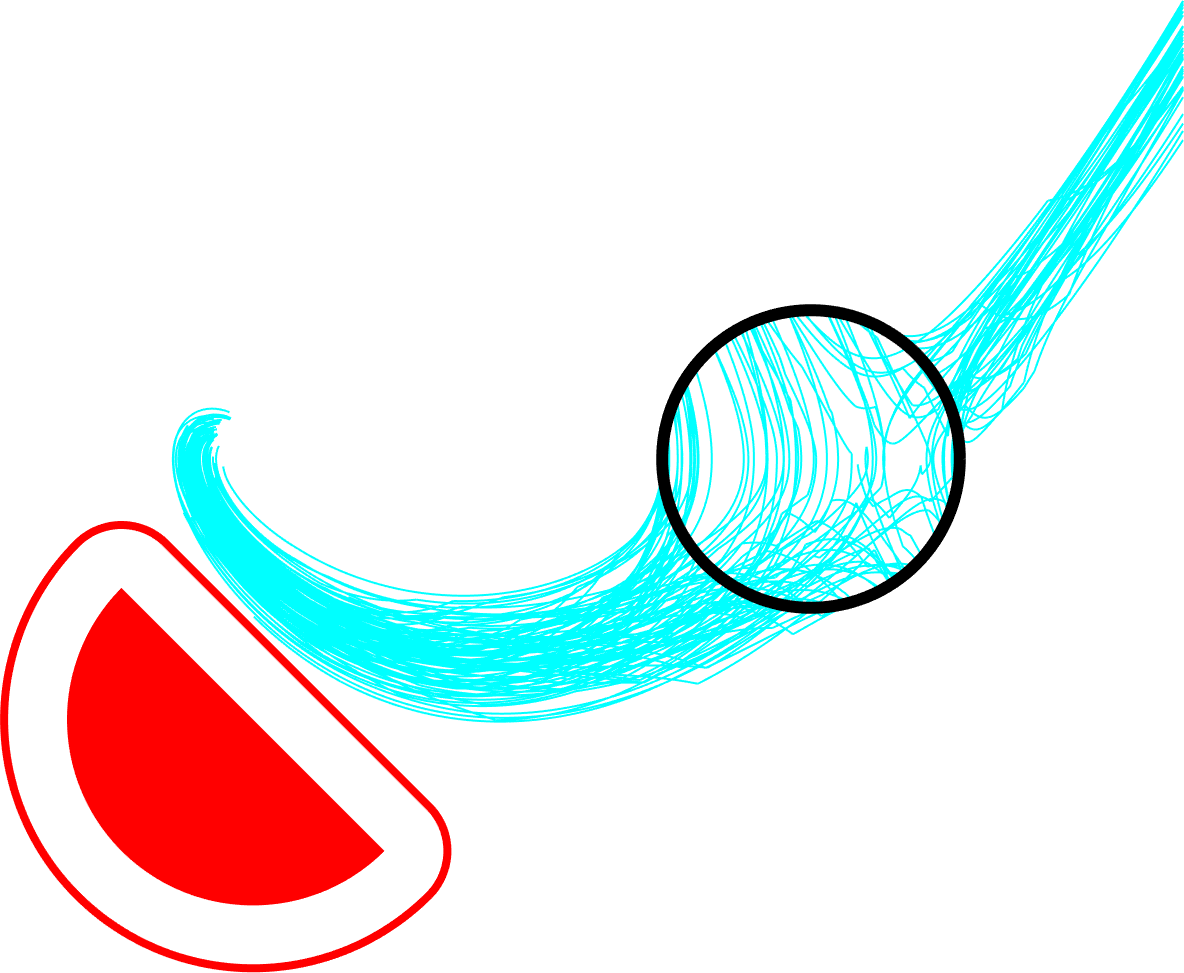}
        \caption{\label{fig:flow_uncertain} Uncertain Flow \eqref{eq:flow_uncertain}, $L_2$ bound of $0.1691$
         }
        \label{fig:my_label}
    \end{figure}

\subsection{Polyhedral Norm Penalties}
\label{sec:polyhedral}
The infinite dimensional \ac{LP} \eqref{eq:dist_meas} is valid for all continuous costs $c(x, y) \in C(X^2)$, but its LMI relaxation can only handle polynomial costs $c(x, y) \in \R[x, y]$. The $L_p$ distance is defined as $\norm{x-y}_p = \sqrt[\leftroot{-3}\uproot{3}p]{\sum_i \abs{x_i-y_i}^p}$ when $p$ is finite and $\norm{x-y}_\infty = \max_i \abs{x_i-y_i}$ for $p$ infinite. The \rev{cost $\norm{x-y}_p^p$} is polynomial when $p$ is finite and even\rev{;} otherwise the $L_p$ distance has a piecewise definition in terms of absolute values. The theory of convex (\ac{LP}) lifts may be used to interpret piecewise constraints into valid \acp{LMI} \cite{yannakakis1991expressing, gouveia2013lifts}. Slack variables $q \in \R$ (or $q_i \in \R$ as appropriate) may be added to form enriched infinite dimensional \acp{LP}. The objective $\inp{c}{\eta}$ from \eqref{eq:dist_meas_obj} could be replaced by the following terms for the examples of $L_\infty, L_1, $ and $L_3$ distances:
\begin{subequations}
\label{eq:distance_slack_obj}
\begin{align}
        \norm{x-y}_\infty & & &\qquad  \qquad \min \quad  q \\
        & & & -q \leq \inp{x_i-y_i}{\eta} \leq q & & \forall i=1, \ldots, n \nonumber\\[1.25\baselineskip]
        \norm{x-y}_1 & & &\qquad  \qquad \min \quad  \textstyle \sum_i q_i \\
        & & & -q_i \leq \inp{x_i-y_i}{\eta} \leq q_i & & \forall i=1, \ldots, n, \nonumber \label{eq:distance_slack_obj_l1}\\[1.25\baselineskip]
        \norm{x-y}_3^3 & & &\qquad  \qquad \min \quad  \textstyle \sum_i q_i \\
        & & & -q_i \leq \inp{(x_i-y_i)^3}{\eta} \leq q_i & & \forall i=1, \ldots, n. \nonumber
    \end{align}
\end{subequations}

Distances induced by polyhedral norms can be included through this lifting framework \cite{anderson1976discrete}. Figure \ref{fig:flow_l1} visualizes the near-optimal trajectory for a minimum $L_1$ distance bound of $0.4003$ (cost \eqref{eq:distance_slack_obj_l1}) at degree $4$. This trajectory starts at $x_0^* \approx (1.489, -0.3998)$ and reaches the closest approach between $x_p^* \approx (0, -0.2997)$ and $y^* \approx (-0.1777, -0.5223)$ at time $t^* \approx 0.6181$ units. The first five $L_1$ distance bounds are $L^{1:5}_1 = [3.179\times10^{-9},4.389\times10^{-8},0.3146,0.4003, 0.4003]$.

    \begin{figure}[ht]
    \centering
        \includegraphics[width=0.5\linewidth]{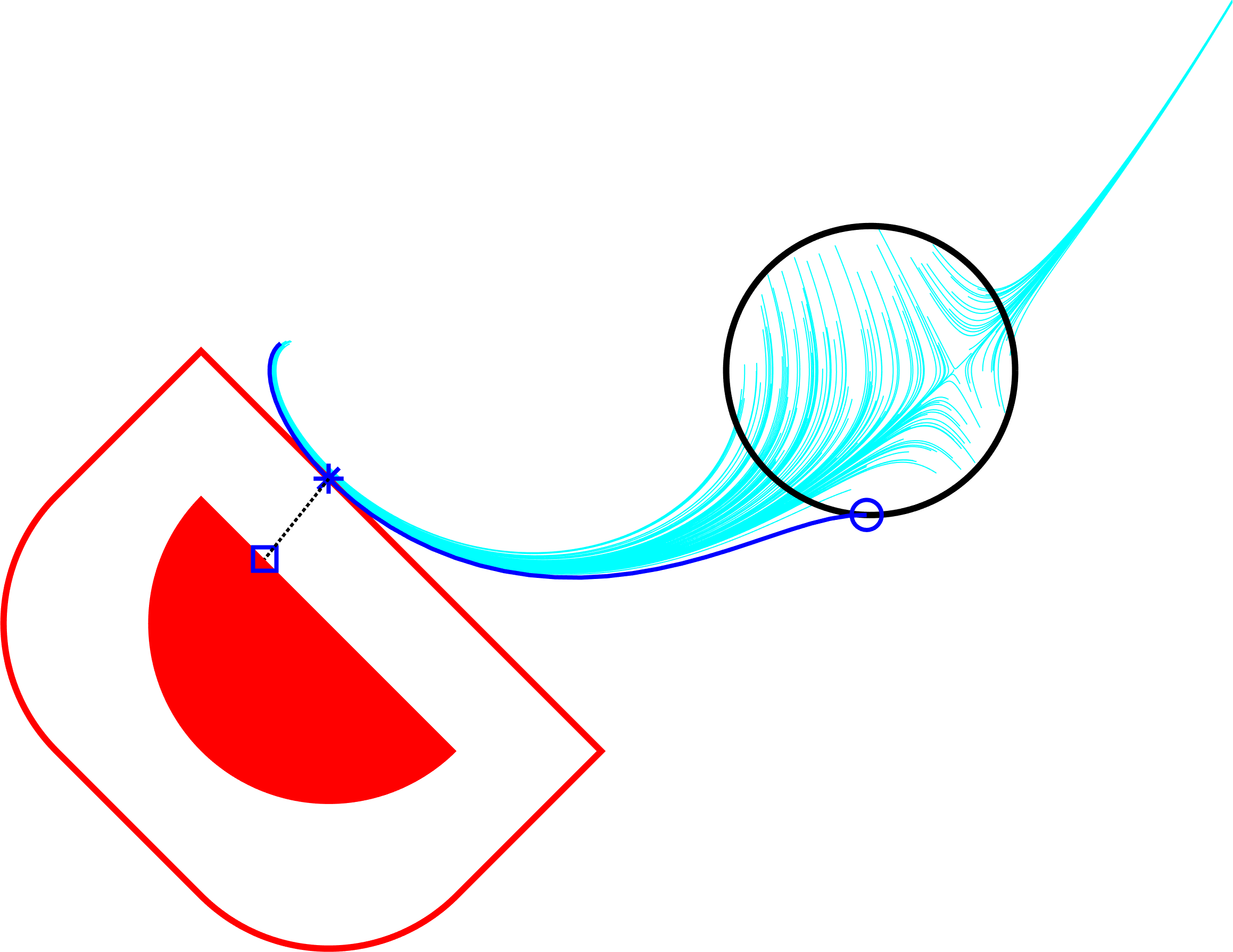}
        \caption{ \label{fig:flow_l1}$L_1$ bound of $0.4003$}
    \end{figure}

\section{Conclusion}
\label{sec:conclusion}

This paper presented an infinite dimensional linear program in occupation measures to approximate the distance estimation problem. The \ac{LP} objective is \rev{equal} to the distance of closest approach between points along trajectories and points on the unsafe set under mild \rev{compactness and regularity}  conditions.
Finite-dimensional truncations of this \ac{LP} yield a converging sequence of \ac{SDP} lower bounds to the minimal distance \rev{under further conditions (Archimedean)}. The distance estimation problem can be modified to accommodate dynamics with uncertainty, piecewise distance functions, and movement of shapes along trajectories. Future work includes formulating and implementing control policies to maximize the distance of closest approach to the unsafe set \rev{while still reaching a terminal set within a specified time}.

\appendix

\section{Proof of Strong Duality in Theorem \ref{thm:strong_duality_dist}}


\label{app:duality}
This proof will follow the method used in \rev{Theorem 2.6 of  \cite{tacchi2021thesis}} to prove duality. 


The two programs \rev{\eqref{eq:dist_meas} and \eqref{eq:dist_cont}} will be posed as a pair of standard-form infinite dimensional 
LPs \rev{using notation from \cite{tacchi2021thesis}}.
The following spaces may be defined:
\begin{align}
    \rev{\mathcal{X}}' &= C(X_0) \times C([0, T]\times X)^2 \times C(X \times X_u) \label{eq:dual_spaces}\\
    \rev{\mathcal{X}} &= \mathcal{M}(X_0) \times \mathcal{M}([0, T]\times X)^2 \times \mathcal{M}(X \times X_u). \nonumber
\end{align}
The nonnegative subcones of $\rev{\mathcal{X}}'$ and $\rev{\mathcal{X}}$ respectively are,
\begin{align}
    \rev{\mathcal{X}}_+' &= C_+(X_0) \times C_+([0, T]\times X)^2 \times C_+(X \times X_u) \label{eq:dual_cones}\\
    \rev{\mathcal{X}}_+ &= \Mp{X_0} \times \Mp{[0, T]\times X}^2 \times \Mp{X \times X_u}. \nonumber
\end{align}

The cones $\rev{\mathcal{X}}_+'$ and $\rev{\mathcal{X}}_+$ in \eqref{eq:dual_cones} are topological duals \rev{under assumption A1}, and the measures from \eqref{eq:dist_meas_joint}-\eqref{eq:dist_meas_occ} satisfy $\rev{\boldsymbol{\mu}} = (\mu_0, \mu_p, \mu, \eta) \in \rev{\mathcal{X}}_+$.
The spaces $\rev{\mathcal{Y}}$ and $\rev{\mathcal{Y}}'$ may be defined as,
\begin{align}
    \rev{\mathcal{Y}}' &= C(X) \times C^1([0, T] \times X) \times \R \\
    \rev{\mathcal{Y}} &= \mathcal{M}(X) \times C^1([0, T] \times X)' \times 0.
\end{align}
\rev{We express $\rev{\mathcal{Y}}_+ = \rev{\mathcal{Y}}$ and $\rev{\mathcal{Y}}_+' = \rev{\mathcal{Y}}'$ to maintain a convention with \cite{tacchi2021thesis} given there are no affine-inequality constraints in \eqref{eq:dist_meas}.} \rw{We equip $\mathcal{X}$ with the weak-* topology and $\mathcal{Y}$ with the (sup-norm bounded) weak topology.}
The arguments $\rev{\boldsymbol{\ell}} = (w, v, \gamma)$ from problem \eqref{eq:dist_cont} are members of the set $\rev{\mathcal{Y}}_+'$.

The linear operators $\A': \rev{\mathcal{Y}}_+' \rightarrow \rev{\mathcal{X}}_+'$ and $\A: \rev{\mathcal{X}}_+ \rightarrow \rev{\mathcal{Y}}_+$ induced from constraints \eqref{eq:dist_meas_marg}-\eqref{eq:dist_meas_prob} may be defined as,
\begin{align}
    \A(\rev{\boldsymbol{\mu}}) =[&\pi^{x}_\# \mu_p -\pi^{x}_\# \eta, \delta_0 \otimes\mu_0 + \Lie_f^\dagger \mu - \mu_p, \inp{1}{\mu_0}]\\ 
    \A'(\rev{\boldsymbol{\ell}}) = [&v(0,x)-\gamma, w(x)-v(t,x), \Lie_f v(t,x), 
    \rw{-}w(x)].\nonumber
\end{align}

The last pieces needed to convert \eqref{eq:dist_meas} into a standard-form LP are the cost vector $\rev{\mathbf{c}} = [0, 0, 0, c(x, y)]$ and the answer vector $\rev{\mathbf{b}} = [0, 0, 1] \in \rev{\mathcal{Y}}'$. 
Problem \eqref{eq:dist_meas} is therefore equivalent to (with $\inp{\rev{\mathbf{c}}}{\rev{\boldsymbol{\mu}}} = \inp{c}{\eta}$),
\begin{align}
    p^* =& \inf_{\rev{\boldsymbol{\mu}} \in \rev{\mathcal{X}}_+} \inp{\rev{\mathbf{c}}}{\rev{\boldsymbol{\mu}}} & & \rev{\mathbf{b}} - \A(\rev{\boldsymbol{\mu}}) \in \rev{\mathcal{Y}}_+. \label{eq:dist_meas_std}\\
\intertext{The dual LP to \eqref{eq:dist_meas_std} in standard form is (with $\inp{\rev{\boldsymbol{\ell}}}{\rev{\mathbf{b}}} = \gamma$),}
    d^* = &\sup_{\rev{\boldsymbol{\ell}} \in \rev{\mathcal{Y}}'_+} \inp{\rev{\boldsymbol{\ell}}}{\rev{\mathbf{b}}}
    & &\A'(\rev{\boldsymbol{\ell}}) - \rev{\mathbf{c}} \in \rev{\mathcal{X}}_+. \label{eq:dist_cont_std}
\end{align}

The operators $\A$ and $\A'$ are adjoints with $\inp{\A(\rev{\boldsymbol{\ell}})}{\rev{\boldsymbol{\mu}}} = \inp{\rev{\boldsymbol{\ell}}}{\A'(\rev{\boldsymbol{\mu}})}$ for all $\rev{\boldsymbol{\ell}} \in \rev{\mathcal{Y}}_+'$ and $\rev{\boldsymbol{\mu}} \in \rev{\mathcal{X}}_+$. 

\rev{
The sufficient conditions for strong duality and attainment of optimality between \eqref{eq:dist_meas_std} and  \eqref{eq:dist_cont_std} as outlined in Theorem 2.6 of \cite{tacchi2021thesis} are that:
\begin{enumerate}
    \item[R1] All support sets are compact (A1)
    \item[R2] All measure solutions have bounded mass (Lemma \ref{lem:bounded})
    \item[R3] All functions involved in the definitions of  $c$ and $\A$ are continuous (A2, A3)
    \item[R4] There exists a $\rev{\boldsymbol{\mu}}_{\textrm{feas}} \in \rev{\mathcal{X}}_+$ with $\rev{\mathbf{b}} - \A(\rev{\boldsymbol{\mu}}_\textrm{feas}) \in \rev{\mathcal{Y}}_+$ 
\end{enumerate}

The requirements R1 and R2 hold by Assumption A1 and Lemma \ref{lem:bounded} respectively. R3 is valid given that $c(x, y)$ is $C^0$ (A3), the projection map $\pi^{x}$ is continuous, and the mapping $(t,x) \mapsto \Lie_f v(t,x)$ is $C^0$ for $v\in C^1$ and $f$ Lipschitz (continuous) (A2). A feasible measure $\rev{\boldsymbol{\mu}}_\textrm{feas}$ may be constructed from the process in Theorem \ref{thm:meas_lower} from a tuple $\mathcal{T}$, therefore satisfying R4.

Strong duality between \eqref{eq:dist_meas} and \eqref{eq:dist_cont} is therefore proven after satisfaction of all four requirements.

}
\section{Moment-SOS Hierarchy}
\label{sec:moment_sos}

 
The standard form for a measure \ac{LP} with variable $\mu \in \Mp{X}$ involving a cost function $p \in C(X)$ and a (possibly infinite) set of affine constraints $\inp{a_j}{\mu} = b_j$ with $b_j \in \R$ and $a_j \in C(X)$ for $j = 1, \ldots, J_{max}$ is, 
\begin{subequations}
\label{eq:meas_program}
\begin{align}
    p^* =&  \sup_{\mu \in \Mp{X}} \inp{p}{\mu}\\
    &\inp{a_j(x)}{\mu} = b_j  & & \forall j=1,\ldots,J_{max}. \label{eq:meas_program_constraints}
\end{align}
\end{subequations}

The dual problem to Program \eqref{eq:meas_program} with dual variables $v_j \in \R: \forall j = 1, \ldots, m$ is,
\begin{subequations}
\label{eq:cont_program}
\begin{align}
    d^* =&  \inf_{v \in \R^m} \textstyle\sum_j b_j v_j\\
    &p(x) - \textstyle\sum_j a_j(x) v_j \geq 0  & & \forall x \in X. \label{eq:cont_nonneg}
\end{align}
\end{subequations}

The objectives in  \eqref{eq:meas_program} and \eqref{eq:cont_program} will match ($p^*=d^*$ strong duality) if $p^*$ is \rev{finite} and if the mapping $\mu \rightarrow \{\inp{a_j(x)}{\mu}\}_{j=1}^m$ is closed in the weak-* topology (Theorem 3.10 in \cite{anderson1987linear}).

When $p(x)$ and all $a_j(x)$ are polynomial, constraint \eqref{eq:cont_nonneg} is a polynomial nonnegativity constraint. The restriction that a polynomial $q(x) \in \R[x]$ is nonnegative over $\R^n$ may be \rev{strengthened} to finding a set of polynomials $\{q_i(x)\}$ such that $q(x) = \sum_i q_i(x)^2$. The polynomials $\{q_i(x)\}$ are \iac{SOS} certificate of nonnegativity of $q(x)$, given that the square of a real quantity $q_i(x)$ at each $i$ and $x$ is nonnegative. The set of \ac{SOS} polynomials in indeterminate quantities $x$ is expressed as $\Sigma[x]$,
\rev{with a maximal-degree-$d$ subset of $\Sigma[x]_{\leq d}$}.

The quadratic module $Q[g]$ formed by the constraints describing the basic semialgebraic set $\K = \{x \mid g_i(x) \geq 0, \ i = 1, \ldots, N_c\}$ is the set of polynomials:
\begin{align}Q[g] = \left\{\sigma_0(x) + \textstyle \sum_{i=1}^{N_c} {\sigma_i(x)g_i(x)}  \right\}, \label{eq:quad_module}
        \end{align}
such that the multipliers $\sigma$ are SOS,
\begin{equation}
    \sigma_i(x) \in \Sigma[x] \qquad \forall i = 0, \ldots, N_c.
\end{equation}
The basic semialgebraic set $\K$ is compact if there exists a constant $0 \leq R < \infty$ such that $\K$ is contained in the ball $R \leq \norm{x}_2^2$. $\K$ satisfies the Archimedean property if the polynomial $R - \norm{x}_2^2 $ is a member of $Q[g]$. The Archimedean property is stronger than compactness \cite{cimpric2011quadratic}, and compact sets may be rendered Archimedean by adding a redundant ball constraint $R - \norm{x}_2^2 \geq 0$ to the list of constraints describing in $\K$ \rev{(though finding such an $R$ may be difficult)}.
\rw{When $\K$ is Archimedean, every polynomial satisfying $p(x) > 0, \forall x \in \K$ has a representation (Putinar's Positivestellensatz  \cite{putinar1993compact}):} 
\begin{equation}
\label{eq:putinar}
    \begin{aligned}
        & p(x) = \sigma_0(x) + \textstyle \sum_i {\sigma_i(x)g_i(x)}\\
        & \sigma_0(x) \in \Sigma[x] \qquad \sigma_i(x) \in \Sigma[x].
    \end{aligned}
\end{equation}

\rev{The \ac{WSOS} set $\Sigma[\K]$ is the set of polynomials that admit a positivity certificate over $\K$ from \eqref{eq:putinar}. Its maximal degree-$d$ subset is $\Sigma[\K]_{\leq d}$).}
Given a multi-index $\alpha \in \N^n$, the $\alpha$-moment of a measure $\mu \in \Mp{X}$ is $\rev{\bm}_\alpha = \inp{x^\alpha}{\mu}$. 
\rev{
An infinite moment matrix $\M[\rbm]_{\alpha,\beta} = \bm_{\alpha + \beta}$ indexed by monomials $\alpha, \beta \in \N^n$ may be constructed from the moment sequence $\bm$. 
}

The degree-$d$ moment matrix $\M_d[\rbm]$ of size $\binom{n+d}{d}$ is the submatrix of  $\M[\rbm]$ where the indices $\M_d[\rbm]_{\alpha, \beta}$ have total degree bounded by $0 \leq \abs{\alpha}, \abs{\beta} \leq d$. Given a polynomial $g(x) \in \R[x]$, the localizing matrix associated with $g$ is a square infinite-dimensional symmetric matrix with entries $\M[g \rbm]_{\alpha, \beta} = \textstyle\sum_{\gamma \in \N^n} g_{\gamma} \rbm_{\alpha+ \beta + \gamma}$. A moment sequence $m$ has a representing measure $\mu \in \Mp{\K}$ if there exists  $\mu$ supported in $\K$ such that $\rev{\bm}_\alpha = \inp{x^\alpha}{\mu} \ \forall \alpha \in \N^n$. 
The \ac{LMI} conditions that $\M[\rbm] \succeq 0$ and $\M[g_i \rbm] \succeq  0 \ \forall i = 1, \ldots, N_c$ are necessary to guarantee the existence of a representing measure associated with $\rev{\bm}$. \rev{The moment matrix $\M[\bm]$ is a localizing matrix with the function $g=1$.} These \ac{LMI} conditions are sufficient if the set $\K$ is Archimedean, and all compact sets may be rendered Archimedean through the application of a redundant ball constraint  \cite{putinar1993compact}. 

Assume that each polynomial $g_i(x)$ in the constraints of $\K$ has a degree $d_i$. \rev{We define a block-diagonal matrix $\M_d[\K \bm]$ containing the moment and all localizing matrices as, 
\begin{equation}
\label{eq:moment_weight}
     \diag{\M_d[\rev{\bm}], \{ \M_{d - d_i}(g_{i} \rev{\bm}) \  \forall i = 1, \ldots, N_c\}}.
\end{equation}
}
The degree-$d$ moment relaxation of Problem \eqref{eq:meas_program} with variables $y \in \R^{\binom{n+2d}{2d}}$ 
is,
\begin{subequations}
\begin{align}
\label{eq:mom_program}
    p^*_d &=  \ \rev{\max}_{\rbm} \textstyle\sum_\alpha p_\alpha \rev{\bm}_\alpha, & &  \M_d[\rev{\K \bm}] \succeq 0  \\
    & \textstyle\sum_\alpha a_{j \alpha} \rev{\bm}_\alpha\ = b_j  & & \forall j = 1, \ldots, m.
\end{align}
\end{subequations}

The bound $p^*_d \geq p^*$ is an upper bound for the infinite-dimensional measure \ac{LP}. The decreasing sequence of upper bounds $p_d^* \geq p_{d+1}^* \geq \ldots \geq p^*$ is convergent to $p^*$ as $d \rightarrow \infty$ if $\K$ is Archimedean. 
The dual semidefinite program to \eqref{eq:mom_program} is the degree-$d$ \ac{SOS} relaxation of \eqref{eq:cont_program}:
\begin{subequations}
\label{eq:sos_program}
\begin{align}
    d^*_d =&  \rev{\min}_{v \in \R^m} \textstyle\sum_j b_j v_j\\
    &p(x) -\textstyle \sum_j a_j(x) v_j = \sigma_0(x) + \textstyle\sum_k {\sigma_i(x)g_i(x)}\\
        & \sigma(x) \in \Sigma[x]_{\rev{\leq 2d}} \\
        & \sigma_i(x) \in \Sigma[x]_{\rev{\leq 2d - \lceil \deg g_i/2 \rceil}} \quad \forall i \in 1,\ldots, N_c. 
\end{align}
\end{subequations}
\rev{We use the convention that the degree-$d$ SOS tightening of \eqref{eq:sos_program} involves polynomials of maximal degree $2d$.}
When the moment sequence $\rev{\bm}_\alpha$ is bounded $(\abs{\rev{\bm}_\alpha} < \infty \ \forall \abs{\alpha} \leq 2d)$ and there exists an interior point of the affine measure constraints in \eqref{eq:meas_program_constraints}, then the finite-dimensional truncations 
\eqref{eq:mom_program} and \eqref{eq:sos_program} will also satisfy strong duality $p^*_k = d^*_k$ 
\rev{at each degree $k$} 
(by arguments from Appendix D/Theorem 4 of \cite{henrion2013convex}
using Theorem 5 of \cite{trnovska2005strong}, \rev{also applied in Corrolary 8 of \cite{tacchi2022convergence} }).
The sequence of upper bounds (outer approximations) $p_d^* \geq p_{d+1}^* \geq \ldots $ computed from \rw{\acp{SDP}} is called the Moment-\ac{SOS} hierarchy.
  
\section*{Acknowledgements}

The authors would like to thank Didier Henrion, Victor Magron, \rw{Matteo Tacchi,} and the POP group at LAAS-CNRS for many technical discussions and suggestions. \rw{We are also grateful to the anonymous reviewers for their many suggestions to improve the original manuscript.}



\bibliographystyle{IEEEtran}
\bibliography{wass_reference.bib}

\end{document}